\documentclass[11pt]{amsart}
\usepackage[utf8]{inputenc}
\usepackage[T1]{fontenc}
\usepackage{amsmath,amssymb,amsthm}
\usepackage{mathtools}
\usepackage[margin=2.6cm]{geometry}
\usepackage{xcolor}
\usepackage{enumitem}
\usepackage[hidelinks,pdftitle={The Serrin exponent for Lane-Emden inequalities driven by sums of fractional p-Laplacians},pdfauthor={L. M. Del Pezzo, B. Pichucho, A. Quaas}]{hyperref}

\newtheorem{theorem}{Theorem}[section]
\newtheorem{proposition}[theorem]{Proposition}
\newtheorem{lemma}[theorem]{Lemma}
\newtheorem{corollary}[theorem]{Corollary}
\theoremstyle{definition}
\newtheorem{definition}[theorem]{Definition}
\newtheorem{example}[theorem]{Example}
\theoremstyle{remark}
\newtheorem{remark}[theorem]{Remark}
\numberwithin{equation}{section}

\newcommand{\R}{\mathbb{R}}
\newcommand{\RN}{\mathbb{R}^N}
\newcommand{\Lc}{\mathcal{L}}
\newcommand{\DL}{\mathcal{D}_{\mathcal L}}
\newcommand{\E}{\mathcal{E}}
\newcommand{\kb}{\overline{\kappa}}
\newcommand{\ku}{\underline{\kappa}}
\newcommand{\qz}{\mathfrak{q}_0}
\newcommand{\qL}{\mathfrak{q}_{\mathcal L}}

\newcommand{\cc}{\mathfrak{c}}
\newcommand{\loc}{\mathrm{loc}}
\newcommand{\dist}{\operatorname{dist}}
\newcommand{\Nm}{I_-}

\begin{document}

\title[Serrin exponent for sums of fractional \(p\)-Laplacians]{The Serrin exponent for Lane--Emden inequalities driven by sums of fractional \(p\)-Laplacians}

\author{Leandro M. Del Pezzo}
\address{IESTA -- Departamento de M\'etodos Cuantitativos, Facultad de Ciencias Econ\'omicas y de Administraci\'on, Universidad de la Rep\'ublica, Av.~Gonzalo Ram\'irez 1926, 11200 Montevideo, Uruguay}
\email{leandro.delpezzo@fcea.edu.uy}
\author{Bryan Pichucho}
\address{Departamento de Matem\'atica, Universidad T\'ecnica Federico Santa Mar\'ia, Avenida Espa\~na 1680, Valpara\'iso, Chile}
\email{bpichucho@usm.cl}
\author{Alexander Quaas}
\address{Departamento de Matem\'atica, Universidad T\'ecnica Federico Santa Mar\'ia, Avenida Espa\~na 1680, Valpara\'iso, Chile}
\email{alexander.quaas@usm.cl}

\date{\today}
\subjclass[2020]{Primary 35B53, 35R11; Secondary 35J92, 35B09, 35D30, 35A08, 47G20}
\keywords{{Fractional \(p\)-Laplacian, sums of operators, mixed local--nonlocal operators, Lane--Emden inequality, Liouville-type theorem, Serrin exponent, weak super-solutions.}}

\begin{abstract}
	We study positive weak super-solutions of the Lane--Emden inequality
	\[
		\Lc u\coloneqq\sum_{i=1}^n(-\Delta_{p_i})^{s_i}u\ge u^q\quad\text{in }\RN,
	\]
	where each \((-\Delta_{p_i})^{s_i}\) is a fractional \(p_i\)-Laplacian of order \(s_i\in(0,1]\) and exponent \(p_i\in(1,\infty)\), and \(\max_is_ip_i<N\).
	Let \(\kappa_i\coloneqq(N-s_ip_i)/(p_i-1)\) be the decay exponent of the fundamental solution of the \(i\)-th operator, and let
	\(\kb\coloneqq\max_i\kappa_i\). We show that the Serrin exponent of \(\Lc\) is \(\qL=N/\kb\). More precisely, for \(0<q<\qL\) the only
	non-negative continuous weak super-solution is \(u\equiv0\), while for every \(q>\qL\) we construct explicit positive weak super-solutions
	of the form \(A(\lambda+|x|)^{-\kappa}\). In other words, the Serrin exponent of the sum is the smallest of the Serrin exponents \(N/\kappa_i\)
	of the single operators, that is, the one of the operator whose fundamental solution decays fastest. No structural assumption relating the
	different operators is needed. The endpoint \(q=\qL\) is left open. The local order \(s_i=1\) is treated together with the nonlocal orders,
	and all the arguments are carried out in the weak setting.
\end{abstract}
\maketitle

%=====================================================================
\section{Introduction}

	A Liouville-type theorem asserts that an equation or inequality in the whole of \(\RN\) has no non-trivial solution in a given class. Combined with the 
	rescaling method of Gidas and Spruck, such results yield a priori bounds, universal estimates and decay rates, see for instance \cite{GS81a,GS81b,SZ02}.

	We first recall the local case. For \(p\in(1,\infty)\), the \(p\)-Laplacian is
	\begin{equation}\label{eq:pLap}
		-\Delta_pu=-\operatorname{div}\left(|\nabla u|^{p-2}\nabla u\right),
	\end{equation}
	and the model problem is the Lane--Emden equation
	\[
		-\Delta_pu=u^q\quad\text{in }\RN.
	\]
	Two different critical exponents are attached to this problem. For solutions and \(p=2\), the classical result of Gidas and Spruck \cite{GS81a,GS81b} gives 
	nonexistence of positive solutions exactly below the Sobolev exponent \(\frac{N+2}{N-2}\). For super-solutions the relevant exponent is smaller. When 
	\(1<p<N\), there is no positive super-solution of 
	\[
		-\Delta_pu\ge u^q\quad\text{in }\RN
	\]
	when \(0<q\le\frac{N(p-1)}{N-p}\), while positive super-solutions exist above this exponent. See Gidas \cite{Gid80} for \(p=2\), Mitidieri and Pohozaev 
	\cite{MP99} and, through a viscosity--comparison argument, Birindelli and Demengel \cite{BD02} (see also Serrin and Zou \cite{SZ02}). The number
	\(\frac{N(p-1)}{N-p}\) is known as the \emph{Serrin exponent} of \(-\Delta_p\), after the work of Serrin on isolated singularities of
	quasilinear equations \cite{Ser64,Ser65}. For \(p=2\) the value \(\frac N{N-2}\) is sometimes also called the \emph{Gidas exponent}, after
	\cite{Gid80}. In this paper we deal with super-solutions, and hence with the Serrin exponent and its nonlocal counterparts.

	\medskip

	We now turn to the nonlocal case. For \(s\in(0,1)\), the fractional \(p\)-Laplacian is
	\begin{equation}\label{eq:fracpLap}
		(-\Delta_p)^su(x)\coloneqq2\,C_{N,s,p}\lim_{\varepsilon\to0^+}\int_{\RN\setminus 
			B_\varepsilon(x)}\frac{\Psi_p(u(x)-u(y))}{|x-y|^{N+sp}}\,dy,\qquad\Psi_p(t)\coloneqq|t|^{p-2}t,
	\end{equation}
	where \(C_{N,s,p}>0\) is a normalization constant. 
	For smooth \(u\), one has \(\lim_{s\nearrow1}(-\Delta_p)^su=-\Delta_pu\) (at points where \(\nabla u\ne0\), 
	if \(p<2\)), see \cite{BS22,DNPV12}. In this sense \(-\Delta_p\) is the endpoint 
	\(s=1\) of the scale, so we write \((-\Delta_p)^1=-\Delta_p\) and treat the local order together with the fractional ones. For simplicity, 
	from now on we omit the constant \(C_{N,s,p}\), since it does not affect the qualitative properties of the solutions we study. A probabilistic 
	interpretation of the fractional \(p\)-Laplacian is given in \cite{EJN26}.

	For nonlinear nonlocal operators, the first Liouville theorems and fundamental solutions are due to Felmer and Quaas \cite{FQ11}. The corresponding Lane--Emden equation is
	\begin{equation}\label{eq:LE-single}
		(-\Delta_p)^su=u^q\quad\text{in }\RN,
	\end{equation}
	which was studied by Del Pezzo and Quaas \cite{DPQ26}, who computed the fundamental solution of \((-\Delta_p)^s\) and proved a dichotomy for \eqref{eq:LE-single} at 
	the exponent \(\frac{N(p-1)}{N-sp}\), with no positive super-solution below it and a positive super-solution above it, namely 
	\(A(1+|x|)^{-\kappa}\) with \(\kappa=\frac{sp}{q-p+1}\) for a suitable \(A>0\).   This exponent is the Serrin exponent of \((-\Delta_p)^s\), and for \(s=1\) it reduces to the Serrin exponent of \(-\Delta_p\) recalled above. We also refer to 
	\cite{Liu25}, where nonexistence is obtained for weak super-solutions in \(W^{s,p}_\loc\) and the critical exponent is also addressed (see Remark~\ref{rem:endpoint}). Kim and Lee \cite{KL26} proved a Liouville theorem for 
	\(\mathcal L_Ku\ge|u|^q\), where \(\mathcal L_K\) is a linear operator with an even kernel such that \(0\le K(z)\le\Lambda|z|^{-N-2s}\), in the range 
	\(1\le q\le N/(N-2s)\).

	\medskip

	Sums of operators have been considered only recently. For the mixed operator 
	\(-\Delta+(-\Delta)^s\), Li, Wang and Xie \cite{LWX26} obtained exact asymptotic expansions of the fundamental solution near the origin and at infinity, 
	and Guo, Li and Xie \cite{GLX26} proved that, for \(N\ge3\) and \(q>1\), the inequality
	\[
		-\Delta u+(-\Delta)^su\ge u^q\quad\text{in }\RN
	\]
	has positive super-solutions if and only if \(q>N/(N-2s)\), which is the Serrin exponent of \((-\Delta)^s\) alone. For the local \((p_1,p_2)\)-Laplacian, 
	Bhakta, Biswas and Filippucci \cite{BBF26} showed that the Serrin exponent of the sum is that of the operator with the smaller homogeneity, that is, with the larger decay exponent \((N-p)/(p-1)\). In both cases the Serrin exponent of the sum is the one of the operator whose fundamental solution decays faster. Barrios, Del Pezzo and Quaas \cite{BDPQ25} studied mixed local--nonlocal 
	Laplacians without a standard critical exponent. Sums of this kind also appear in the double-phase and \((p,q)\)-growth problems going back to 
	Marcellini \cite{Mar91}, in mixed local--nonlocal problems \cite{BDVV22,GK22}, and in superpositions of fractional operators \cite{DPSV25,DPLSV25}.

	\medskip

	\noindent\textbf{Aim of this paper.} We identify the Serrin exponent of a finite sum of fractional \(p\)-Laplacians whose orders and 
	homogeneities may differ, and we establish the corresponding dichotomy. This covers in a single framework sums of local operators, sums of nonlocal operators and mixed sums.

	Fix \(n\in\mathbb N\), orders \(s_1,\dots,s_n\in(0,1]\) and exponents 
	\(p_1,\dots,p_n\in(1,\infty)\), and set
	\begin{equation}\label{eq:L}
		\Lc u(x)\coloneqq\sum_{i=1}^n(-\Delta_{p_i})^{s_i}u(x).
	\end{equation}
	We study the Lane--Emden problem
	\begin{equation}\label{eq:LE}
		\Lc u=u^q\quad\text{in }\RN,
	\end{equation}
	which reduces to \eqref{eq:LE-single} when \(n=1\) and brings in the local \(p\)-Laplacian whenever some \(s_i=1\). All our results hold verbatim for 
	\(\sum_ia_i(-\Delta_{p_i})^{s_i}\) with constants \(a_i>0\) (see Remark~\ref{rem:weights}).

	The operators in \(\Lc\) scale differently. To make this precise, we write
	\begin{equation}\label{eq:kappai}
		\kappa_i\coloneqq\frac{N-s_ip_i}{p_i-1},\qquad i\in\{1,\dots,n\},
	\end{equation}
	for the \emph{decay exponent} of the \(i\)-th operator, since the power \(|x|^{-\kappa_i}\) is, up to a multiplicative constant, the fundamental solution of \((-\Delta_{p_i})^{s_i}\). When the radial profile \(w_\kappa(x)=(1+|x|)^{-\kappa}\) is inserted in \(\Lc\), the \(i\)-th term is bounded from below by
	\[
		\cc_{s_i,p_i}(-\kappa)\,(1+|x|)^{-(\kappa(p_i-1)+s_ip_i)},
	\]
	provided \(\kappa<N/(p_i-1)\) when \(s_i<1\) (for larger \(\kappa\) see Proposition~\ref{prop:large}), where the constant \(\cc_{s_i,p_i}(-\kappa)\) is positive for \(\kappa<\kappa_i\), vanishes at \(\kappa=\kappa_i\) and is negative for \(\kappa>\kappa_i\) (Lemma~\ref{lem:fundamental} and Proposition~\ref{prop:pointwise}), while \(w_\kappa^q=(1+|x|)^{-\kappa q}\). If one requires all these constants to be non-negative, one is forced to take \(\kappa\le\min_i\kappa_i\), and, unless all the decay exponents coincide, the resulting range of exponents \(q\) is not optimal (Remark~\ref{rem:oneparam}). The point is that the terms with a negative constant are harmless. Indeed, since \((p_i-1)\kappa_i=N-s_ip_i\), we have
	\begin{equation}\label{eq:key}
		\kappa(p_i-1)+s_ip_i=N+(p_i-1)(\kappa-\kappa_i),
	\end{equation}
	so a term with \(\kappa_i<\kappa\) decays faster than \(|x|^{-N}\), while a term with \(\kappa_i>\kappa\) decays more slowly than \(|x|^{-N}\). After the rescaling \(W(x)=A(\lambda+|x|)^{-\kappa}\) with \(\lambda\) large, one term with a positive constant absorbs all the negative ones. This allows us to take \(\kappa\) up to the \emph{largest} decay exponent.

	We set
	\begin{equation}\label{eq:Qi}
		Q_i(\kappa)\coloneqq p_i-1+\frac{s_ip_i}\kappa\quad(\kappa>0),\qquad\kb\coloneqq\max_{1\le i\le n}\kappa_i.
	\end{equation}
		By \eqref{eq:key}, the Serrin exponent of the \(i\)-th operator alone is \(Q_i(\kappa_i)=N/\kappa_i\). The Serrin exponent of the sum will be
	\begin{equation}\label{eq:qL}
		\qL\coloneqq\frac N\kb=\min_{1\le i\le n}Q_i(\kb),
	\end{equation}
	where the second equality is proved in Lemma~\ref{lem:thresholds}. Both the existence of super-solutions for \(q>\qL\) and the nonexistence for \(0<q<\qL\) hold without further assumptions. The difficulties come from the fractional indices \(i\) with \(\kb\ge N/(p_i-1)\), and we explain after the statement of the main result how we deal with them.

	\medskip
	\noindent\textbf{Main result.} Our main result is the following dichotomy.

	\begin{theorem}[Sharp dichotomy]\label{thm:main}
		Let \(N\ge2\), \(n\ge1\), \(p_i\in(1,\infty)\) and \(s_i\in(0,1]\) for \(i\in\{1,\dots,n\}\), with \(\max_is_ip_i<N\). Then the following hold.
		\begin{enumerate}[label=\textup{(\roman*)}]
			\item If \(0<q<\qL\), the only non-negative weak super-solution \(u\in\DL(\RN)\cap C(\RN)\) of \eqref{eq:LE} (in the sense of Definition~\ref{def:weak}) is \(u\equiv0\).
			\item If \(q>\qL\), problem \eqref{eq:LE} admits positive weak super-solutions of the form \(W(x)=A(\lambda+|x|)^{-\kappa}\).
		\end{enumerate}
	\end{theorem}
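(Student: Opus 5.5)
Part (ii) and part (i) will be proved separately. For (ii) the construction is the one sketched in the introduction. Given \(q>\qL=N/\kb\), I choose \(\kappa\) slightly below \(\kb\) (or equal to \(\kb\) when this is admissible) so that \(\kappa q>N\) and \(\kappa\) is strictly larger than every \(\kappa_i\) that is \(<\kb\). I also need \(q\ge\min_iQ_i(\kappa)\); by Lemma~\ref{lem:thresholds} this is compatible near \(\kb\). Let \(i_0\) be an index with \(\kappa_{i_0}=\kb>\kappa\). By Lemma~\ref{lem:fundamental} and Proposition~\ref{prop:pointwise}, the \(i_0\)-th term applied to \(W=A(\lambda+|x|)^{-\kappa}\) is bounded below by
\[
\cc_{s_{i_0},p_{i_0}}(-\kappa)\,A^{p_{i_0}-1}\lambda^{\cdots}(\lambda+|x|)^{-N-(p_{i_0}-1)(\kappa-\kb)}
\]
with a positive constant. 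By \eqref{eq:key}, this term decays more slowly than \(|x|^{-N}\). For each other index \(i\), two cases arise. If \(\kappa_i\ge\kappa\), the term is non-negative. If \(\kappa_i<\kappa\), the term is negative but of order \((\lambda+|x|)^{-N-(p_i-1)(\kappa-\kappa_i)}\), which is faster than \(|x|^{-N}\). When \(s_i<1\) and \(\kappa\ge N/(p_i-1)\), I use Proposition~\ref{prop:large} instead; it gives the bound \(-C(\lambda+|x|)^{-N-s_ip_i}\lambda^{\cdots}\), which is again faster than \(|x|^{-N}\).

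Two things then have to be arranged by choosing \(\lambda\) and \(A\). First, taking \(\lambda\) large, the ratio of each negative term to the \(i_0\) term is bounded by \(C\lambda^{-\delta}\) uniformly in \(x\), since all quantities are functions of \(\lambda+|x|\ge\lambda\). So for large \(\lambda\) the good term absorbs the bad ones. Second, the resulting lower bound \(cA^{p_{i_0}-1}(\lambda+|x|)^{-\kappa Q_{i_0}(\kappa)}\) must dominate \(A^q(\lambda+|x|)^{-\kappa q}\). This needs \(\kappa q\ge\kappa Q_{i_0}(\kappa)\) and a suitable choice of \(A\) (small or large according to the sign of \(q-p_{i_0}+1\)). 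The different powers of \(A\) in the terms with different \(p_i\) must also be tracked. I expect this bookkeeping of \(A\) versus \(\lambda\) to be the delicate point of (ii). I would fix \(A=1\) and use \(\lambda\) together with a dilation \(W(x/R)\), so that the scaling exponents \(s_ip_i\) do the balancing. Finally, I will check that \(W\in\DL(\RN)\) and that the pointwise inequality yields the weak inequality of Definition~\ref{def:weak}, by integrating against test functions.

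For (i), I argue by contradiction and take \(u\not\equiv0\), \(q<N/\kb\). Step one is a strong minimum principle (weak Harnack) for \(\Lc\), which gives \(u>0\). Step two compares \(u\) from below with the fundamental-type profile of the slowest-decaying operator. Using a barrier \(\varepsilon(1+|x|)^{-\kappa}\) with \(\kappa\) slightly bigger than \(\kb\), so that all constants \(\cc\) are negative, I would get a weak comparison on exterior domains \(\{|x|>R\}\). Here \(u\) is a super-solution of \(\Lc u\ge0\), \(u\ge\min_{B_R}u>0\) on \(B_R\), and the nonlocal tails are handled by the positivity of \(u\). This gives \(u(x)\ge c|x|^{-\kappa}\) for any \(\kappa>\kb\). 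Step three is a test-function (Mitidieri–Pohozaev-type) estimate. I test the weak inequality with \(\varphi_R^{m}\) or \(u^{-\gamma}\varphi_R\) to obtain
\[
\int_{B_R}u^q\le C\sum_i R^{N-s_ip_i}\cdot(\text{powers of }u)\lesssim \sum_i R^{N-s_ip_i q/(q-p_i+1)\cdots}.
\]
This shows that \(\int_{B_R}u^q\) grows slower than \(R^{N-\kappa q}\), which contradicts the lower bound \(\int_{B_R}u^q\gtrsim R^{N-\kappa q}\to\infty\) when \(\kappa q<N\).

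The main obstacle is step three for a sum of nonhomogeneous operators in the weak nonlocal setting. The terms with different \(p_i\) cannot be combined through a single Hölder exponent. I would treat each term separately, use the lower bound on \(u\) to control the negative-power test functions, and bound the nonlocal tails by \(R^{-s_ip_i}\) times averages.
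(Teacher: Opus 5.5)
Your part (ii) follows the paper's proof of Theorem~\ref{thm:existence}. You keep an index \(i_0\) with \(\kappa<\kappa_{i_0}=\kb\), bound the other terms from below by Proposition~\ref{prop:pointwise} or Proposition~\ref{prop:large}, absorb the negative ones using \eqref{eq:key} and \(\lambda\) large, and pass to the weak formulation by Lemma~\ref{lem:barrier}. The bookkeeping you flag is not delicate. Since \(q\ge Q_{i_0}(\kappa)>p_{i_0}-1\), the exponent \(q-p_{i_0}+1\) is always positive. So you fix \(\kappa\), then \(A\) with \(A^{q-p_{i_0}+1}\le\frac12\cc_{s_{i_0},p_{i_0}}(-\kappa)\), and only then \(\lambda\ge\lambda_0(A)\); the factors \(A^{p_i-1}\) simply enter \(\lambda_0\). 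No dilation is needed, since a dilation of \(W\) only reparametrizes \(A\) and \(\lambda\).

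Part (i), however, has a genuine gap. Step three, which carries the entire upper bound, is a placeholder, and as sketched it cannot cover the statement.
\begin{itemize}
\item The range \(0<q<\qL\) always contains exponents with \(q\le p_i-1\) for some \(i\), for instance all sufficiently small \(q\). For such indices the exponent \(s_ip_iq/(q-p_i+1)\) is meaningless. The terms produced by \(\E_{p_i,s_i}(u,\varphi_R^m)\) cannot be absorbed into \(\int u^q\varphi_R^m\) by H\"older or Young, because a super-solution has no upper bound. These are the tail term \(R^{-s_ip_i}\int u^{p_i-1}\varphi_R^m\) and the Caccioppoli terms \(\int u^{(\gamma+1)(p_i-1)}(\cdots)\).
\item For \(q>p_i-1\), testing with \(u^{-\gamma}\varphi_R\) controls only the \emph{sum} over \(j\) of the gradient and Gagliardo terms, bounded by \(\sum_j\int u^{p_j-1-\gamma}(\cdots)\). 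Each \(\E_{p_i,s_i}(u,\varphi_R^m)\) is therefore estimated through cross terms involving all the other orders and homogeneities. Nothing in your sketch shows that the resulting exponents reproduce exactly \(N/\kb\); this is the obstacle you name yourself, and it stays open.
\item Step two also needs repair. For \(\kappa>\kb\) and a fractional index with \(\kappa(p_i-1)\ge N\), which does occur for sums (Example~\ref{ex:three}(b)), the constant \(\cc_{s_i,p_i}(-\kappa)\) does not exist. So ``all constants are negative'' is not a valid reason.
\item Moreover, a sub-solution needs an \emph{upper} bound for \((-\Delta_{p_i})^{s_i}\) of the barrier, while Proposition~\ref{prop:pointwise} gives a lower bound. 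The paper instead uses the flattened powers \(\min\{\varepsilon^\beta,|x|^\beta\}\). For these the upper bound comes from comparison with \(|x|^\beta\) (Case~1 of Lemma~\ref{lem:trunc}), or from the negative contribution of the mass near the origin (Case~2).
\end{itemize}

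The missing idea, which removes the homogeneity mismatch altogether, is to work with \(m(R)=\min_{\overline{B_R}}u\) instead of integrals of \(u\). Compare \(u\), on a sublevel set of \(u-\zeta\), with the shifted flat cut-off \(\zeta=(m(R/2)+\eta)\mu(|x|/R)\), whose operator is computed term by term via scaling. This gives Proposition~\ref{prop:contact}: \(m(R)^q\le C^*\sum_im(R/2)^{p_i-1}R^{-s_ip_i}\), with each operator treated separately. Combine it with the doubling estimate \(m(R/2)\le Cm(R)\) (Proposition~\ref{prop:had2}) and the lower bound \(m(R)\ge cR^\beta\) for \(\beta<-\kb\) (Proposition~\ref{prop:had1}). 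Then every summand of \(1\le C\sum_im(R)^{p_i-1-q}R^{-s_ip_i}\) tends to \(0\). The indices with \(p_i-1\ge q\) are handled simply by \(m(R)\le u(0)\).
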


	Here \(\DL(\RN)\) is the natural domain of \(\Lc\), defined in \eqref{eq:DL}. We prove Theorem~\ref{thm:main} in two parts. Part (ii) is Theorem~\ref{thm:existence} in Section~\ref{sec:existence}, which also gives the admissible ranges of \(\kappa\), \(A\) and \(\lambda\). Part (i) is Theorem~\ref{thm:liouville} in Section~\ref{sec:liouville}.

	\medskip

	To be more concrete, we write down Theorem~\ref{thm:main} for two operators. This is only an example, but it already shows how the threshold is selected.

	\begin{corollary}[Two operators, \(n=2\)]\label{cor:n2}
		Let \(N\ge2\) and, for \(i=1,2\), let \(p_i\in(1,\infty)\) and \(s_i\in(0,1]\) with \(\max_is_ip_i<N\). Order the operators so that \(\kappa_1\le\kappa_2\), and consider
		\begin{equation}\label{eq:LE2}
			(-\Delta_{p_1})^{s_1}u+(-\Delta_{p_2})^{s_2}u=u^q\quad\text{in }\RN.
		\end{equation}
		Then the following hold.
		\begin{enumerate}[label=\textup{(\roman*)}]
			\item If \(0<q<N/\kappa_2\), the only non-negative weak super-solution \(u\in\DL(\RN)\cap C(\RN)\) of \eqref{eq:LE2} is \(u\equiv0\).
			\item If \(q>N/\kappa_2\), problem \eqref{eq:LE2} admits positive weak super-solutions of the form \(W(x)=A(\lambda+|x|)^{-\kappa}\).
		\end{enumerate}
	\end{corollary}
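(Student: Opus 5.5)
Corollary~\ref{cor:n2} is the case \(n=2\) of Theorem~\ref{thm:main}, so the plan is to specialize that theorem rather than argue from scratch. With \(n=2\), the decay exponents are \(\kappa_i=(N-s_ip_i)/(p_i-1)\) for \(i=1,2\). These are well defined and positive because \(\max_is_ip_i<N\) and \(p_i>1\). Under the ordering \(\kappa_1\le\kappa_2\) we get \(\kb=\max\{\kappa_1,\kappa_2\}=\kappa_2\). By \eqref{eq:qL} the threshold is therefore \(\qL=N/\kb=N/\kappa_2\).

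The two alternatives then follow by substitution. Fix \(0<q<N/\kappa_2=\qL\). Part (i) of Theorem~\ref{thm:main} (that is, Theorem~\ref{thm:liouville}) applies with \(\Lc=(-\Delta_{p_1})^{s_1}+(-\Delta_{p_2})^{s_2}\). It shows that every non-negative weak super-solution in \(\DL(\RN)\cap C(\RN)\) of \eqref{eq:LE2}, which is \eqref{eq:LE} for this \(\Lc\), vanishes identically. For \(q>N/\kappa_2\), part (ii) (Theorem~\ref{thm:existence}) yields positive weak super-solutions of the form \(A(\lambda+|x|)^{-\kappa}\). The admissible ranges of \(\kappa\), \(A\) and \(\lambda\) are the ones given there.

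Relabelling the operators to achieve \(\kappa_1\le\kappa_2\) is harmless. The sum \(\Lc\) is symmetric in the two terms, so \(\DL\) and the notion of weak super-solution in Definition~\ref{def:weak} do not change under the relabelling. The only point that needs checking is that the hypotheses \(N\ge2\), \(p_i\in(1,\infty)\), \(s_i\in(0,1]\) and \(\max_is_ip_i<N\) are exactly those of Theorem~\ref{thm:main} with \(n=2\), and they are. There is no real obstacle here: all the work sits in the two general theorems.
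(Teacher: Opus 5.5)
Your proposal is correct and takes the same approach as the paper. The paper also observes that the ordering \(\kappa_1\le\kappa_2\) gives \(\kb=\kappa_2\), hence \(\qL=N/\kappa_2\), and then reads off both parts of Corollary~\ref{cor:n2} directly from Theorem~\ref{thm:main} with \(n=2\).
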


	Since \(\kb=\kappa_2\), this is Theorem~\ref{thm:main} with \(n=2\). The threshold \(N/\kappa_2\) is set by the operator with the \emph{larger} decay exponent, that is, by the operator whose fundamental solution decays faster. In part (ii) one takes \(\kappa<\kappa_2\) close to \(\kappa_2\). When \(\kappa_1<\kappa<\kappa_2\), the first operator contributes a negative term, which is absorbed by the second one. By \eqref{eq:key}, this negative term decays faster than \(|x|^{-N}\), while the positive one decays more slowly, and the absorption is achieved through the parameter \(\lambda\). For instance, for \(-\Delta+(-\Delta)^{1/2}\) in \(\R^3\) we have \(\kappa_1=1\) and \(\kappa_2=2\), so the threshold is \(3/2\) and not the exponent \(3\) of the Laplacian alone (see Example~\ref{ex:three}).

	\medskip

	To reach the threshold \(N/\kb\) in part (ii), we need barriers \(W\) with \(\kappa\) close to \(\kb\). If \(\kb>N/(p_i-1)\) for some fractional index \(i\), then \(\kappa(p_i-1)\ge N\) for \(\kappa\) close to \(\kb\). Then \(|x|^{-\kappa(p_i-1)}\) is not integrable at the origin, and the comparison with the fundamental solution that gives the lower bound of Proposition~\ref{prop:pointwise}, as in \cite{DPQ26}, is no longer available. Proposition~\ref{prop:large} fills this gap. It gives
	\[
		(-\Delta_{p_i})^{s_i}w_\kappa(x)\ge-C(1+|x|)^{-(N+s_ip_i)}\log(e+|x|)\qquad\text{for }x\ne0,
	\]
	and the proof shows where the negative part comes from: it is the mass of \(w_\kappa^{p_i-1}\) near the origin, seen from far away. This term decays faster than \(|x|^{-N}\), so it is absorbed like the other negative terms. Without this estimate the barriers would be limited to \(\kappa<N/(p_i-1)\) for every fractional index \(i\), and in general the threshold \(N/\kb\) would be out of reach.

	\medskip

	The proof of part (i) follows the scheme of \cite{DPQ26}, and its new ingredient is Lemma~\ref{lem:trunc}. A super-solution is bounded from below by comparison with truncated powers \(|x|^\beta\), and the argument needs \(\beta<-\kb\) arbitrarily close to \(-\kb\). If \(\kb\ge N/(p_i-1)\) for some fractional index \(i\), we are forced to take \(\beta\le-N/(p_i-1)\). In that range \((-\Delta_{p_i})^{s_i}|x|^\beta\) is not even defined, and the estimates of \cite[Lemmas~4.1 and~4.2]{DPQ26} do not apply. Lemma~\ref{lem:trunc} shows that the truncated powers \(\min\{\varepsilon^\beta,|x|^\beta\}\) are still sub-solutions far from the origin, for every \(\beta<-\kappa_i\). In the new range the reason is the one behind Proposition~\ref{prop:large}. Flattening the power puts a large mass near the origin, and this mass produces a negative term of order \(|x|^{-N-s_ip_i}\), which wins at large distances. As a consequence, part (i) needs no condition relating the different operators. This is a phenomenon of sums: for a single operator \(\kappa_1<N/(p_1-1)\), and the new range never occurs. Example~\ref{ex:three}(b) shows a sum where it does.

	\medskip

	For \(n=1\), Theorem~\ref{thm:main} is contained in \cite[Theorems~6.1 and~6.2]{DPQ26}, and the nonexistence part for \(q<\qL\) was extended by Liu \cite{Liu25} to weak super-solutions in \(W^{s,p}_\loc\), without continuity. For \(-\Delta+(-\Delta)^s\) the threshold \(N/(N-2s)\) agrees with \cite{GLX26}, and for the sum of two local operators with the exponent found in \cite{BBF26}. We stress that in the model case \(-\Delta+(-\Delta)^s\) the result of \cite{GLX26} is stronger than ours: it applies to distributional super-solutions and includes the endpoint \(q=N/(N-2s)\). On the other hand, it requires \(q>1\), while part (i) of Theorem~\ref{thm:main} also covers the sublinear range \(0<q\le1\). The operator may combine different orders \(s_i\) and different homogeneities \(p_i\), a situation not covered by \cite{GLX26,BBF26,KL26}. The proofs also differ: the negative terms are absorbed through the parameter \(\lambda\), and all the arguments are carried out for weak super-solutions.

	We do not treat the endpoint \(q=N/\kb\). It belongs to the nonexistence range for a single local operator \cite{MP99,SZ02} and for \(-\Delta+(-\Delta)^s\) \cite{GLX26}. For a single fractional \(p\)-Laplacian it is, to the best of our knowledge, still open, and for general sums our argument stops exactly there (see Remark~\ref{rem:endpoint}).

	A further difference with \cite{DPQ26} is the framework. There, the contact argument at the end of the proof of the Liouville theorem is carried out for viscosity solutions, which for a single operator is justified by the equivalence between weak and viscosity solutions \cite{BM21,KKL19}. For the sum \(\Lc\), and in the singular range \(1<p_i<2\), this equivalence is delicate (see \cite{LG26} for \(-\Delta_p+(-\Delta_p)^s\)). We avoid it altogether. The strong minimum principle and the contact estimate are proved directly for weak super-solutions, by means of the comparison principle and of a cut-off function which is flat to infinite order where its gradient vanishes.

	\medskip
	\noindent\textbf{Plan of the paper.} In Section~\ref{sec:prelim}, we fix notation, introduce weak solutions of \(\Lc\), show that pointwise and weak inequalities agree for the explicit barriers used later, and prove the comparison and strong minimum principles. In Section~\ref{sec:barrier}, we establish the pointwise lower bound for the radial barrier \(w_\kappa(x)=(1+|x|)^{-\kappa}\), for every sign of the fundamental constant, together with a cruder lower bound for large \(\kappa\). In Section~\ref{sec:existence}, we prove part (ii) of Theorem~\ref{thm:main}, in the more precise form of Theorem~\ref{thm:existence}. In Section~\ref{sec:liouville}, we prove the estimates for truncated powers, the Hadamard estimates and the contact estimate, and from them we deduce part (i) of Theorem~\ref{thm:main}, stated there as Theorem~\ref{thm:liouville}. We also comment on the endpoint. Finally, in Section~\ref{sec:examples}, we give examples.
%=====================================================================
\section{Preliminaries and notation}\label{sec:prelim}

	This section sets up what we need in the rest of the paper. We begin with the function spaces suited to the multi-operator setting, the forms associated with each operator and with their sum, and the notion of weak sub- and super-solution (Section~\ref{subsec:spaces}), and then we record some elementary properties of these forms (Section~\ref{subsec:forms}). Every solution in this paper is taken in this weak sense. In Section~\ref{subsec:pointwise} we show that, for the explicit functions used as barriers, pointwise inequalities can be integrated into weak ones. We close the section with the comparison and strong minimum principles for \(\Lc\) (Section~\ref{subsec:comparison}).

\subsection{Function spaces, forms and weak solutions}\label{subsec:spaces}
	Throughout, \(\Omega\) is an open subset of \(\RN\), and \(s\in(0,1)\), \(p\in(1,\infty)\). We denote by \(W^{s,p}(\Omega)\) the fractional Sobolev space
	\[
		W^{s,p}(\Omega)\coloneqq\left\{u\in L^p(\Omega):\ [u]_{W^{s,p}(\Omega)}<\infty\right\},
	\]
	with the Gagliardo seminorm
	\[
		[u]_{W^{s,p}(\Omega)}\coloneqq\left(\iint_{\Omega\times\Omega}\frac{|u(x)-u(y)|^p}{|x-y|^{N+sp}}\,dx\,dy\right)^{1/p}.
	\]
	The space is normed by \(\|u\|_{W^{s,p}(\Omega)}\coloneqq\left(\|u\|^p_{L^p(\Omega)}+[u]^p_{W^{s,p}(\Omega)}\right)^{1/p}\). In the local case \(s=1\), \(W^{1,p}(\Omega)\) is the classical Sobolev space. We write
	\[
		W^{s,p}_\loc(\Omega)\coloneqq\left\{u\in L^p_\loc(\Omega):\ u\in W^{s,p}(\Omega')\text{ for every open set }\Omega'\Subset\Omega\right\},
	\]
	and analogously \(W^{1,p}_\loc(\Omega)\) in the local case. For \(s\in(0,1)\) we also use the tail space
	\begin{equation}\label{eq:tail}
		L^{p-1}_{sp}(\RN)\coloneqq\left\{u\in L^{p-1}_\loc(\RN):\ \int_{\RN}\frac{|u(x)|^{p-1}}{(1+|x|)^{N+sp}}\,dx<\infty\right\}.
	\end{equation}
	We split the indices into fractional and local ones,
	\begin{equation}\label{eq:FL}
		F\coloneqq\{i:\ s_i\in(0,1)\},\qquad L\coloneqq\{i:\ s_i=1\},\qquad F\cup L=\{1,\dots,n\}.
	\end{equation}
	The natural domain of \(\Lc\) is
	\begin{equation}\label{eq:DL}
		\DL(\Omega)\coloneqq\bigcap_{i\in F}\left(W^{s_i,p_i}_\loc(\Omega)\cap L^{p_i-1}_{s_ip_i}(\RN)\right)\ \cap\ \bigcap_{i\in L}W^{1,p_i}_\loc(\Omega).
	\end{equation}
	Observe that every bounded and locally Lipschitz function \(u:\RN\to\R\) belongs to \(\DL(\RN)\). We also use the notation \(\Psi_p\) of \eqref{eq:fracpLap} for vectors, \(\Psi_p(\xi)\coloneqq|\xi|^{p-2}\xi\) for \(\xi\in\RN\).

	To each \((-\Delta_{p_i})^{s_i}\) we attach a form \(\E_{p_i,s_i}(u,\varphi)\), linear in the second argument. It is, formally, the first variation of the energy
	\[
		\frac1{p_i}[u]^{p_i}_{W^{s_i,p_i}(\RN)}\quad\text{if }s_i\in(0,1),\qquad\frac1{p_i}\|\nabla u\|^{p_i}_{L^{p_i}}\quad\text{if }s_i=1.
	\]
	Let \(u\in\DL(\Omega)\) and \(\varphi\in C^\infty_c(\Omega)\). In the nonlocal case \(s_i\in(0,1)\) we set
	\begin{equation}\label{eq:Enl}
		\E_{p_i,s_i}(u,\varphi)\coloneqq\iint_{\RN\times\RN}\frac{\Psi_{p_i}(u(x)-u(y))(\varphi(x)-\varphi(y))}{|x-y|^{N+s_ip_i}}\,dx\,dy,
	\end{equation}
	and in the local case \(s_i=1\) we set
	\begin{equation}\label{eq:Eloc}
		\E_{p_i,1}(u,\varphi)\coloneqq\int_\Omega\Psi_{p_i}(\nabla u)\cdot\nabla\varphi\,dx.
	\end{equation}
	Both are absolutely convergent integrals (see Remark~\ref{rem:welldef} below). We write
	\[
		\E_\Lc(u,\varphi)\coloneqq\sum_{i=1}^n\E_{p_i,s_i}(u,\varphi).
	\]

	\begin{definition}[Weak super- and sub-solutions]\label{def:weak}
		Let \(f:\Omega\times\R\to\R\). A function \(u:\RN\to\R\) is a \emph{weak super-solution} (\emph{sub-solution}) of
		\begin{equation}\label{eq:Lf}
			\Lc u=f(x,u)\quad\text{in }\Omega,
		\end{equation}
		if
		\begin{enumerate}[label=\textup{(\arabic*)}]
			\item \(u\in\DL(\Omega)\),
			\item \(f(\cdot,u)\in L^1_\loc(\Omega)\),
			\item for every non-negative \(\varphi\in C^\infty_c(\Omega)\),
			\[
				\E_\Lc(u,\varphi)\ \ge(\le)\ \int_\Omega f(x,u)\,\varphi\,dx.
			\]
		\end{enumerate}
		Moreover, \(u\) is a \emph{weak solution} of \eqref{eq:Lf} if it is both a weak super- and sub-solution. We also say that \(\Lc u\ge f\) (resp.\ \(\Lc u\le f\)) \emph{weakly in} \(\Omega\) when \(u\) is a weak super-solution (respectively sub-solution) of \eqref{eq:Lf}.
	\end{definition}

	\begin{remark}\label{rem:welldef}
		The definition is well posed; see, for instance, \cite{DKP16,KKL19} for a single operator. Fix \(\varphi\in C^\infty_c(\Omega)\), let \(K\coloneqq\operatorname{supp}\varphi\), and choose an open set \(U\) with \(K\subset U\Subset\Omega\). For \(i\in L\), H\"older's inequality gives \(|\E_{p_i,1}(u,\varphi)|\le\|\nabla u\|^{p_i-1}_{L^{p_i}(U)}\|\nabla\varphi\|_{L^{p_i}(U)}\). For \(i\in F\), we split \(\RN\times\RN\) into \(U\times U\) and its complement. On \(U\times U\) we use H\"older's inequality, and on the rest we use that \(\varphi\) vanishes outside \(K\) and that \(|x-y|\ge\dist(K,\partial U)\) and \(1+|y|\le C|x-y|\) for \(x\in K\) and \(y\notin U\). This gives
		\begin{equation}\label{eq:formbound}
			\iint_{\RN\times\RN}\frac{|u(x)-u(y)|^{p_i-1}|\varphi(x)-\varphi(y)|}{|x-y|^{N+s_ip_i}}\,dx\,dy\le[u]^{p_i-1}_{W^{s_i,p_i}(U)}[\varphi]_{W^{s_i,p_i}(U)}+C_u\|\varphi\|_{L^{p_i}(U)},
		\end{equation}
		where \(C_u\) depends on \(K\), \(U\), \(N\), \(s_i\), \(p_i\), \(\|u\|_{L^{p_i}(U)}\) and on the tail \(\int_{\RN}|u|^{p_i-1}(1+|y|)^{-N-s_ip_i}\,dy\). Finally, the right-hand side of Definition~\ref{def:weak}(3) is finite by (2).
	\end{remark}

\subsection{Properties of the forms}\label{subsec:forms}
	For \(u,v\in\DL(\Omega)\) we say that \(\Lc u\ge\Lc v\) \emph{weakly in} \(\Omega\) if
	\[
		\E_\Lc(u,\varphi)\ge\E_\Lc(v,\varphi)\qquad\text{for every non-negative }\varphi\in C^\infty_c(\Omega).
	\]
	By \eqref{eq:formbound} and its local analogue, the same formulas define \(\E_{p_i,s_i}(u,\varphi)\) when \(\varphi\) has compact support in \(\Omega\) and belongs to \(W^{s_i,p_i}(\RN)\) if \(i\in F\), or to \(W^{1,p_i}(\RN)\) if \(i\in L\). Moreover, for functions \(\varphi\) supported in a fixed compact subset of \(\Omega\), the map \(\varphi\mapsto\E_{p_i,s_i}(u,\varphi)\) is continuous with respect to the norm of that space.

	Each \(\E_{p_i,s_i}\) is linear with respect to the test function, and so is \(\E_\Lc\). This is what allows us to reduce statements about the sum to statements involving a single operator. We record two elementary invariances.

	\begin{remark}[Affine invariance and scaling]\label{rem:affine}
		Adding a constant changes neither \(v(x)-v(y)\) nor \(\nabla v\), and \(\Psi_{p_i}\) is \((p_i-1)\)-homogeneous. Hence, for \(\alpha>0\), \(c\in\R\), \(v\in\DL(\Omega)\) and \(\varphi\in C^\infty_c(\Omega)\),
		\begin{equation}\label{eq:affine}
			\E_\Lc(\alpha v+c,\varphi)=\sum_{i=1}^n\alpha^{p_i-1}\E_{p_i,s_i}(v,\varphi).
		\end{equation}
		So if \(\E_{p_i,s_i}(v,\varphi)\le0\) for every \(i\), then \(\E_\Lc(\alpha v+c,\varphi)\le0\). The sign of each summand is needed here, because the factors \(\alpha^{p_i-1}\) are different. In the same way, the change of variables \(y\mapsto z+\lambda y\) gives, for \(\lambda>0\) and \(z\in\RN\),
		\begin{equation}\label{eq:scaling}
			(-\Delta_{p_i})^{s_i}\left(\alpha\,v(\tfrac{\cdot-z}\lambda)+c\right)(x)=\alpha^{p_i-1}\lambda^{-s_ip_i}\,\left[(-\Delta_{p_i})^{s_i}v\right](\tfrac{x-z}\lambda),
		\end{equation}
		whenever the right-hand side exists.
	\end{remark}

	\begin{remark}[Weights]\label{rem:weights}
		Since the normalization constants have been dropped, it is worth noting that all the results of this paper hold, with the same proofs, for the operator \(\sum_{i=1}^na_i(-\Delta_{p_i})^{s_i}\) with constants \(a_i>0\). The only change is that each constant \(\cc_{s_i,p_i}\) below is multiplied by \(a_i\), which does not affect its sign.
	\end{remark}

\subsection{From pointwise to weak inequalities}\label{subsec:pointwise}
	As explained in the introduction, we work only with weak super-solutions and never evaluate \(\Lc u\) at a point. The operator is evaluated pointwise only at explicit functions, namely the truncated powers of Section~\ref{sec:liouville}, the barrier \(W(x)=A(\lambda+|x|)^{-\kappa}\) and the flat cut-off functions. Lemmas~\ref{lem:local} to~\ref{lem:flat} show that, for these functions, the principal value exists and pointwise inequalities can be integrated against non-negative test functions.

	\medskip

	For \(s\in(0,1)\), \(x\in\RN\) and \(\varepsilon>0\) we consider the truncated operator
	\[
		(-\Delta_p)^s_\varepsilon u(x)\coloneqq2\int_{\RN\setminus B_\varepsilon(x)}\frac{\Psi_p(u(x)-u(y))}{|x-y|^{N+sp}}\,dy,
	\]
	so that \((-\Delta_p)^s_\varepsilon u(x)\to(-\Delta_p)^su(x)\) as \(\varepsilon\to0^+\), whenever the limit exists. The limit need not exist, or may be infinite, at a critical point of a smooth function \(u\) when \(p\le\frac2{2-s}\). For instance, at a nondegenerate maximum the integrand behaves like \(|x-y|^{2(p-1)-N-sp}\). For this reason, every pointwise evaluation used in this paper is justified explicitly in this subsection.

	\medskip

	Let \(u\) be bounded and \(\varphi\in C^\infty_c(\RN)\). Since the kernel is integrable away from the diagonal, Fubini's theorem and the symmetry of the domain \(\{|x-y|>\varepsilon\}\) give
	\begin{equation}\label{eq:trunc}
		\iint_{\{|x-y|>\varepsilon\}}\frac{\Psi_p(u(x)-u(y))(\varphi(x)-\varphi(y))}{|x-y|^{N+sp}}\,dx\,dy=\int_{\RN}(-\Delta_p)^s_\varepsilon u(x)\,\varphi(x)\,dx.
	\end{equation}
	If moreover \(u\) is locally Lipschitz, the left-hand side converges to \(\E_{p,s}(u,\varphi)\) as \(\varepsilon\to0^+\), by dominated convergence. Consequently,
	\begin{equation}\label{eq:Eident}
		\E_{p,s}(u,\varphi)=\int_{\RN}(-\Delta_p)^su(x)\,\varphi(x)\,dx
	\end{equation}
	as soon as \((-\Delta_p)^s_\varepsilon u\to(-\Delta_p)^su\) a.e.\ in \(\operatorname{supp}\varphi\) and there is \(g\in L^1_\loc(\RN)\) such that \(|(-\Delta_p)^s_\varepsilon u|\le g\) in \(\operatorname{supp}\varphi\) for every \(\varepsilon\in(0,1)\). To verify these two conditions we use the following two lemmas. They follow the arguments of \cite[Lemmas~3.6, 3.8 and~3.10]{KKL19}, and we include the proofs for completeness, since we need the explicit dependence of the bounds on \(|\nabla u(x)|\) and on the second derivatives of \(u\).

	\begin{lemma}[Local estimate]\label{lem:local}
		Let \(N\ge2\), \(s\in(0,1)\), \(p\in(1,\infty)\), \(x\in\RN\) and \(\rho>0\). Let \(u\in C^2(B_\rho(x))\) with bounded second derivatives, and set
		\[
			G\coloneqq|\nabla u(x)|,\qquad M\coloneqq\sup_{B_\rho(x)}|D^2u|.
		\]
		If \(1<p<2\), assume in addition that \(G>0\). Then there is a constant \(C=C(N,s,p)>0\) such that, for every \(0<\varepsilon<\rho\),
		\begin{equation}\label{eq:near}
			\left|2\int_{\{\varepsilon<|x-y|<\rho\}}\frac{\Psi_p(u(x)-u(y))}{|x-y|^{N+sp}}\,dy\right|\le C\,\Theta(G,M,\rho),
		\end{equation}
		where
		\[
			\Theta(G,M,\rho)\coloneqq
			\begin{cases}
				(G+M\rho)^{p-2}M\rho^{p-sp},&\text{if }p\ge2,\\[2pt]
				M\,G^{p-2}\rho^{p-sp}+M^p\,G^{-1}\rho^{2p-1-sp},&\text{if }1<p<2.
			\end{cases}
		\]
		Moreover, the integral on the left-hand side of \eqref{eq:near} has a finite limit as \(\varepsilon\to0^+\).
	\end{lemma}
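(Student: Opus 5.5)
We prove the estimate by a second-order Taylor expansion of \(u\) at \(x\). We then exploit the odd symmetry of the first-order term, which cancels over the spherically symmetric annulus \(\{\varepsilon<|x-y|<\rho\}\).

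Write \(y=x+h\) and \(u(x)-u(y)=-\nabla u(x)\cdot h+R(h)\), with \(|R(h)|\le \frac M2|h|^2\) for \(|h|<\rho\). Since \(h\mapsto\Psi_p(-\nabla u(x)\cdot h)|h|^{-N-sp}\) is odd and the annulus is symmetric, its integral vanishes. Hence
\[
	\int_{\{\varepsilon<|h|<\rho\}}\frac{\Psi_p(u(x)-u(x+h))}{|h|^{N+sp}}\,dh=\int_{\{\varepsilon<|h|<\rho\}}\frac{\Psi_p(a+b)-\Psi_p(a)}{|h|^{N+sp}}\,dh,
\]
where \(a=-\nabla u(x)\cdot h\) and \(b=R(h)\). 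The whole proof then reduces to pointwise bounds on \(|\Psi_p(a+b)-\Psi_p(a)|\), integrated in polar coordinates.

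\emph{Case \(p\ge2\).} The mean value theorem gives
\[
	|\Psi_p(a+b)-\Psi_p(a)|\le(p-1)(|a|+|b|)^{p-2}|b|\le C(G|h|+M|h|^2)^{p-2}M|h|^2\le C(G+M\rho)^{p-2}M|h|^{p}.
\]
Integrating \(|h|^{p-N-sp}\) over \(|h|<\rho\) gives \(C\rho^{p-sp}\), since \(p-sp>0\). This yields \(\Theta\) in this case. The integrand is dominated by an integrable function uniformly in \(\varepsilon\), so dominated convergence gives the finite limit as \(\varepsilon\to0^+\).

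\emph{Case \(1<p<2\).} This is the main obstacle, because \(\Psi_p\) is only \((p-1)\)-Hölder and the derivative \(|t|^{p-2}\) blows up where \(a=0\), i.e.\ on the hyperplane \(\nabla u(x)\cdot h=0\). I would split according to whether \(|b|\le|a|/2\).

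On the set where \(|b|\le|a|/2\), the mean value theorem gives \(|\Psi_p(a+b)-\Psi_p(a)|\le C|a|^{p-2}|b|\le C|a|^{p-2}M|h|^2\). Writing \(a=-G|h|\cos\theta\), the angular integral \(\int_{S^{N-1}}|\cos\theta|^{p-2}\,d\sigma\) is finite because \(p-2>-1\). This contributes \(CMG^{p-2}\int|h|^{p-N-sp}\,dh=CMG^{p-2}\rho^{p-sp}\).

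On the complementary set \(|a|<2|b|\), we use Hölder continuity: \(|\Psi_p(a+b)-\Psi_p(a)|\le C|b|^{p-1}\le CM^{p-1}|h|^{2p-2}\). This set is contained in \(\{|\cos\theta|\le M|h|/G\}\), whose angular measure is at most \(CM|h|/G\). Its contribution is therefore bounded by
\[
	CM^{p}G^{-1}\int_0^\rho r^{2p-2+1-1-sp}\,dr=CM^pG^{-1}\rho^{2p-1-sp}.
\]
The integral converges because \(2p-1-sp>0\), as \(p>1\ge s\).

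In both regions the bounds are independent of \(\varepsilon\) and integrable near \(h=0\), so dominated convergence again gives the existence of the finite limit as \(\varepsilon\to0^+\). The constants depend only on \(N,s,p\). The angular-measure estimate of the thin slab around the hyperplane \(\{\nabla u(x)\cdot h=0\}\) is the delicate point. This is exactly where the assumption \(G>0\) is used.
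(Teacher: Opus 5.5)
Your proof is correct and essentially the paper's. The paper symmetrizes in \(h\mapsto-h\) and writes the numerator as \(\Psi_p(\nabla u(x)\cdot h-b_-)-\Psi_p(\nabla u(x)\cdot h+b_+)\); this is the same cancellation of the first-order term as your subtraction of the odd part \(\Psi_p(-\nabla u(x)\cdot h)\). It then uses the same mean-value bound for \(p\ge2\), the same two-region split for \(1<p<2\) with the same angular-slab estimate (cutting along \(|\nabla u(x)\cdot h|\ge2M|h|^2\) where you cut along \(|b|\le|a|/2\)), and the same dominated-convergence argument for the limit.
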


	\begin{proof}
		Changing variables \(y=x+h\) and symmetrizing in \(h\mapsto-h\), the integral in \eqref{eq:near} equals
		\[
			\int_{\{\varepsilon<|h|<\rho\}}\frac{\Psi_p(u(x)-u(x+h))+\Psi_p(u(x)-u(x-h))}{|h|^{N+sp}}\,dh.
		\]
		By Taylor's formula, \(u(x\pm h)=u(x)\pm\nabla u(x)\cdot h+b_\pm(h)\) with \(|b_\pm(h)|\le\frac12M|h|^2\) for \(|h|<\rho\). Since \(\Psi_p\) is odd, the numerator is
		\[
			\Psi_p(\nabla u(x)\cdot h-b_-(h))-\Psi_p(\nabla u(x)\cdot h+b_+(h)).
		\]
		We estimate its absolute value, which we denote by \(I(h)\).

		\medskip

		\noindent{\bf \emph{Case \(p\ge2\)}:} By the mean value theorem, \(|\Psi_p(t)-\Psi_p(\tau)|\le(p-1)\max\{|t|,|\tau|\}^{p-2}|t-\tau|\). Since \(|\nabla u(x)\cdot h|\le G|h|\),
		\[
			I(h)\le(p-1)(G|h|+M|h|^2)^{p-2}M|h|^2\le(p-1)(G+M\rho)^{p-2}M|h|^p.
		\]
		Integrating against \(|h|^{-N-sp}\) over \(\{|h|<\rho\}\) and using \(p-sp>0\) we obtain \eqref{eq:near}.

		\medskip

		\noindent{\bf \emph{Case \(1<p<2\)}:} We split \(\{|h|<\rho\}\) into two regions. In the region where \(|\nabla u(x)\cdot h|\ge2M|h|^2\), both arguments of \(\Psi_p\) have the sign of \(\nabla u(x)\cdot h\) and absolute value at least \(\frac12|\nabla u(x)\cdot h|\), so the mean value theorem gives
		\[
			I(h)\le(p-1)(\tfrac12|\nabla u(x)\cdot h|)^{p-2}M|h|^2.
		\]
		Writing \(h=r\sigma\) with \(r=|h|\) and \(\sigma\in S^{N-1}\), and \(\nabla u(x)=G\omega\) with \(|\omega|=1\), the contribution of this region is bounded by
		\[
			C\,M\,G^{p-2}\int_{S^{N-1}}|\omega\cdot\sigma|^{p-2}\,d\sigma\int_0^\rho r^{p-1-sp}\,dr=C\,M\,G^{p-2}\rho^{p-sp},
		\]
		where the integral over the sphere is finite because \(p-2>-1\) and \(N\ge2\). In the region where \(|\nabla u(x)\cdot h|<2M|h|^2\) we use \(|\Psi_p(t)|=|t|^{p-1}\) to get \(I(h)\le2\left(\frac52M|h|^2\right)^{p-1}\). For fixed \(r\), this region corresponds to the directions \(\sigma\) with \(|\omega\cdot\sigma|<2Mr/G\), a subset of \(S^{N-1}\) of measure at most \(CMr/G\). Its contribution is therefore bounded by
		\[
			C\,M^{p-1}\frac MG\int_0^\rho r^{2(p-1)+1-1-sp}\,dr=C\,M^p\,G^{-1}\rho^{2p-1-sp},
		\]
		since \(2p-1-sp>0\). This proves \eqref{eq:near}.

		In both cases we have in fact shown that \(I(h)|h|^{-N-sp}\) is integrable on \(\{|h|<\rho\}\), so the limit as \(\varepsilon\to0^+\) exists by dominated convergence.
	\end{proof}

	Our first application covers the truncated powers used in Section~\ref{sec:liouville}, which are smooth with non-vanishing gradient in the annuli where they are tested.

	\begin{lemma}[Smooth functions with non-vanishing gradient]\label{lem:smooth}
		Let \(N\ge2\), \(p\in(1,\infty)\), \(s\in(0,1)\), let \(U\subset\RN\) be open, and let \(v:\RN\to\R\) be bounded and Lipschitz, with \(v\in C^2(U)\) and, if \(1<p<2\), \(\nabla v\ne0\) in \(U\). Then \((-\Delta_p)^sv(x)\) exists for every \(x\in U\), it is bounded on every compact subset of \(U\), and
		\[
			\E_{p,s}(v,\varphi)=\int_{\RN}(-\Delta_p)^sv(x)\,\varphi(x)\,dx\qquad\text{for every }\varphi\in C^\infty_c(U).
		\]
		In particular, if \((-\Delta_p)^sv\le0\) pointwise in \(U\), then \(\E_{p,s}(v,\varphi)\le0\) for every non-negative \(\varphi\in C^\infty_c(U)\).
	\end{lemma}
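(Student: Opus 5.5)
The plan is to verify the two conditions stated before Lemma~\ref{lem:local}: pointwise existence of \((-\Delta_p)^sv(x)\) for \(x\in U\), and a uniform \(L^1_\loc\) bound for the truncated operators \((-\Delta_p)^s_\varepsilon v\) on \(\operatorname{supp}\varphi\). The identity \eqref{eq:Eident} then follows from \eqref{eq:trunc} and dominated convergence.

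Fix a compact set \(K\subset U\) and choose \(\rho>0\) with \(2\rho<\dist(K,\partial U)\), so that \(\overline{B_\rho(x)}\subset U_\rho\coloneqq\{y:\dist(y,K)\le\rho\}\Subset U\) for every \(x\in K\). For \(x\in K\) and \(0<\varepsilon<\rho\), split
\[
	(-\Delta_p)^s_\varepsilon v(x)=2\int_{\{\varepsilon<|x-y|<\rho\}}\frac{\Psi_p(v(x)-v(y))}{|x-y|^{N+sp}}\,dy+2\int_{\{|x-y|\ge\rho\}}\frac{\Psi_p(v(x)-v(y))}{|x-y|^{N+sp}}\,dy.
\]
The far part does not depend on \(\varepsilon\). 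Since \(v\) is bounded, it is bounded by \(C(2\|v\|_\infty)^{p-1}\rho^{-sp}\), uniformly in \(x\in K\).

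For the near part, apply Lemma~\ref{lem:local} with \(G=|\nabla v(x)|\) and \(M\le M_K\coloneqq\sup_{U_\rho}|D^2v|\), which is finite because \(v\in C^2(U)\) and \(U_\rho\) is compact. Lemma~\ref{lem:local} asks for bounded second derivatives on \(B_\rho(x)\), and this holds since \(B_\rho(x)\subset U_\rho\). It gives existence of the limit as \(\varepsilon\to0^+\) and the bound \(C\,\Theta(G,M_K,\rho)\). When \(p\ge2\), \(G\le\mathrm{Lip}(v)\) and \(\Theta\) is bounded uniformly on \(K\). When \(1<p<2\), the only dangerous factors are \(G^{p-2}\) and \(G^{-1}\). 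Here \(\nabla v\) is continuous and nonvanishing on the compact set \(K\), so \(G\ge g_K>0\) on \(K\), and again \(\Theta\) is uniformly bounded. This step, the uniform lower bound on the gradient in the singular range, is the only real obstacle, and it is handled by compactness.

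Hence \((-\Delta_p)^sv\) exists at every \(x\in U\), is bounded on \(K\), and \(|(-\Delta_p)^s_\varepsilon v|\le C_K\) on \(K\) for all \(\varepsilon\in(0,\rho)\). For \(\varepsilon\in[\rho,1)\) the crude bound \(C\|v\|_\infty^{p-1}\varepsilon^{-sp}\le C\|v\|_\infty^{p-1}\rho^{-sp}\) applies. Now take \(\varphi\in C^\infty_c(U)\) and \(K=\operatorname{supp}\varphi\). The left side of \eqref{eq:trunc} converges to \(\E_{p,s}(v,\varphi)\), because \(v\) is Lipschitz and bounded: the integrand is dominated by \(C\min\{|x-y|^{p-1},1\}\min\{|x-y|,1\}|x-y|^{-N-sp}\), with one variable restricted near \(K\). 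The right side converges to \(\int(-\Delta_p)^sv\,\varphi\) by dominated convergence with dominating function \(C_K|\varphi|\). This proves the identity, and the final sign statement follows directly from it with \(\varphi\ge0\).
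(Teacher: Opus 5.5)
Your proposal is correct and follows the paper's proof essentially step by step. You fix $\rho$ from $\dist(K,\partial U)$, control the region $\{\varepsilon<|x-y|<\rho\}$ by Lemma~\ref{lem:local} using $|\nabla v|\ge\min_K|\nabla v|>0$ when $1<p<2$, bound the far part uniformly, and conclude from \eqref{eq:trunc} and dominated convergence. The only difference is that you bound the far part by $C\|v\|_\infty^{p-1}\rho^{-sp}$ using boundedness alone, whereas the paper uses the Lipschitz bound \eqref{eq:farLip}; its sharper dependence on $\rho$ is reused in Lemma~\ref{lem:barrier} but is not needed here.
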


	\begin{proof}
		Let \(K\subset U\) be compact, let \(\rho\coloneqq\frac12\min\{1,\dist(K,\partial U)\}\), and let \(K_\rho\coloneqq\{y:\dist(y,K)\le\rho\}\), a compact subset of \(U\). Set \(G_K\coloneqq\min_K|\nabla v|\), which is positive if \(1<p<2\), and \(M_K\coloneqq\max_{K_\rho}|D^2v|<\infty\). For \(x\in K\) and \(\varepsilon\in(0,1)\) we split \((-\Delta_p)^s_\varepsilon v(x)\) into the contributions of \(\{|x-y|\ge\rho\}\) and of \(\{\varepsilon<|x-y|<\rho\}\). Let \(\Lambda\) be the Lipschitz constant of \(v\). Since \(|v(x)-v(y)|\le\min\{\Lambda|x-y|,2\|v\|_\infty\}\),
		\begin{equation}\label{eq:farLip}
			2\int_{\{|x-y|\ge\rho\}}\frac{|v(x)-v(y)|^{p-1}}{|x-y|^{N+sp}}\,dy\le C\left(1+\int_\rho^1r^{p-2-sp}\,dr\right)\le C\left(1+\rho^{p-1-sp}+|\log\rho|\right),
		\end{equation}
		with \(C\) depending only on \(N\), \(s\), \(p\), \(\Lambda\) and \(\|v\|_\infty\). The same bound holds for the whole of \((-\Delta_p)^s_\varepsilon v(x)\) when \(\varepsilon\ge\rho\). The contribution of \(\{\varepsilon<|x-y|<\rho\}\) is bounded by \(C\,\Theta(G,M_K,\rho)\) with \(G=|\nabla v(x)|\in[G_K,\|\nabla v\|_\infty]\), by Lemma~\ref{lem:local}. Hence the truncations are bounded on \(K\) uniformly in \(\varepsilon\), and the limit exists at every point of \(K\). Taking \(K=\operatorname{supp}\varphi\), the identity follows from \eqref{eq:trunc} and dominated convergence.
	\end{proof}

	Our second application concerns the barrier used in the existence theorem. It is smooth with non-vanishing gradient away from the origin, so the new point is its behavior near \(x=0\).

	\begin{lemma}[The barrier]\label{lem:barrier}
		Let \(N\ge2\), \(p\in(1,\infty)\), \(\kappa>0\), \(A>0\), \(\lambda\ge1\) and
		\[
			W(x)\coloneqq A(\lambda+|x|)^{-\kappa},\qquad x\in\RN.
		\]
		Then the following hold.
		\begin{enumerate}[label=\textup{(\alph*)}]
			\item The function \(W\) is bounded and Lipschitz in \(\RN\). In particular \(W\in W^{1,p}_\loc(\RN)\cap L^{p-1}_{sp}(\RN)\) for every \(s\in(0,1)\).
			\item For every \(s\in(0,1]\), the value \((-\Delta_p)^sW(x)\) exists for every \(x\ne0\), and \((-\Delta_p)^sW\in L^1_\loc(\RN)\). For \(s=1\),
			\begin{equation}\label{eq:lapW}
				-\Delta_pW(x)=(A\kappa)^{p-1}(\lambda+|x|)^{-\kappa(p-1)-p}\left[N-p-(p-1)\kappa+\frac{(N-1)\lambda}{|x|}\right],\qquad x\ne0.
			\end{equation}
			\item For every \(s\in(0,1]\) and every \(\varphi\in C^\infty_c(\RN)\),
			\[
				\E_{p,s}(W,\varphi)=\int_{\RN}(-\Delta_p)^sW(x)\,\varphi(x)\,dx.
			\]
		\end{enumerate}
		Consequently, if \(W\) satisfies \(\Lc W\ge f\) pointwise in \(\RN\setminus\{0\}\) for some \(f\in L^1_\loc(\RN)\), then \(\Lc W\ge f\) weakly in \(\RN\).
	\end{lemma}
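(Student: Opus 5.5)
Part (a) is elementary. \(W\) is radial and \(|W|\le A\lambda^{-\kappa}\). The profile \(r\mapsto A(\lambda+r)^{-\kappa}\) has derivative bounded by \(A\kappa\lambda^{-\kappa-1}\), so \(W\) is Lipschitz. Bounded locally Lipschitz functions lie in \(W^{1,p}_\loc\). The tail integral is finite because \(|W|^{p-1}\) is bounded and \((1+|x|)^{-N-sp}\) is integrable.

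For (b) with \(s=1\), away from the origin \(W\) is smooth. Writing \(W=\phi(|x|)\), one has
\[
-\Delta_pW=-r^{1-N}\bigl(r^{N-1}\Psi_p(\phi')\bigr)',\qquad \Psi_p(\phi')=-(A\kappa)^{p-1}(\lambda+r)^{-(\kappa+1)(p-1)}.
\]
Differentiating this gives \eqref{eq:lapW} directly. The bracket contains the term \(\lambda(N-1)/|x|\), which is in \(L^1_\loc\) since \(N\ge2\).

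For \(s\in(0,1)\) and \(x\ne0\), I will apply Lemma~\ref{lem:smooth} with \(U=\RN\setminus\{0\}\). Here \(W\in C^2(U)\) and \(\nabla W\ne0\) in \(U\). This gives existence of \((-\Delta_p)^sW(x)\) and local boundedness on compact subsets of \(U\).

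\textbf{The main obstacle} is local integrability near \(0\), and the identity in (c) with \(\operatorname{supp}\varphi\ni0\). For this I need a bound, uniform in \(\varepsilon\), on \(|(-\Delta_p)^s_\varepsilon W(x)|\) by an \(L^1_\loc\) function \(g(x)\) near \(0\). I will use Lemma~\ref{lem:local} at \(x\) with \(0<|x|<1\), taking \(\rho=|x|/2\) so that \(B_\rho(x)\) avoids the origin.
\begin{itemize}
\item On this ball \(M\le C|x|^{-1}\), since the Hessian of \(\phi(|x|)\) carries the term \(\phi'(r)/r\). Also \(G=|\phi'(|x|)|\), which is comparable to a positive constant.
\item For \(p\ge2\) this gives \(\Theta\le C|x|^{-1}|x|^{p-sp}\), which is integrable.
\item For \(1<p<2\) the first term is the same. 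The second is \(M^pG^{-1}\rho^{2p-1-sp}\le C|x|^{p-1-sp}\), which is locally integrable since \(p-1-sp>-1>-N\).
\end{itemize}
The far part \(\{|x-y|\ge\rho\}\) is controlled by \eqref{eq:farLip}, giving \(C(1+|x|^{p-1-sp}+|\log|x||)\), again integrable. Summing these yields the dominating \(g\). With \(g\) in hand, \eqref{eq:Eident} gives (c) for \(s<1\), since the principal value exists a.e.

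For \(s=1\), (c) follows by integrating by parts on \(\RN\setminus B_\delta\) and letting \(\delta\to0\). The boundary term is bounded by \(C\delta^{N-1}\|\varphi\|_\infty\to0\), because \(|\nabla W|\) is bounded.

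The final statement follows by linearity of \(\E_\Lc\) in \(\varphi\). Summing (c) over \(i\) gives \(\E_\Lc(W,\varphi)=\int\Lc W\,\varphi\ge\int f\varphi\) for \(\varphi\ge0\). Since \(\{0\}\) is null, the pointwise inequality on \(\RN\setminus\{0\}\) suffices, and \(W\in\DL(\RN)\) by (a).
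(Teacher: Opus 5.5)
Your proposal is correct and follows essentially the paper's own proof. For $s=1$ you use the explicit radial computation and integration by parts on $\RN\setminus B_\delta$; for $s\in(0,1)$ you split at $\rho=|x|/2$, using Lemma~\ref{lem:local} near $x$ and \eqref{eq:farLip} away from it, which yields the dominating function $C(1+|x|^{p-1-sp}+|\log|x||)$. The only difference is cosmetic: you hand the region away from the origin to Lemma~\ref{lem:smooth} with $U=\RN\setminus\{0\}$ (the bound uniform in $\varepsilon$ on compact sets comes from its proof rather than its statement), whereas the paper treats $|x|\ge1$ directly with $\rho=\frac12$.
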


	\begin{proof}
		Part (a) is clear, since \(|\nabla W(x)|=A\kappa(\lambda+|x|)^{-\kappa-1}\le A\kappa\) for \(x\ne0\). Note also that \(W\) is smooth in \(\RN\setminus\{0\}\) and that
		\begin{equation}\label{eq:D2W}
			|D^2W(x)|\le C_0\left(1+|x|^{-1}\right)\qquad\text{for all }x\ne0,
		\end{equation}
		with \(C_0\) depending only on \(N\), \(A\) and \(\kappa\).

		We prove (b) and (c) separately in the local and in the nonlocal case.

		\medskip

		\noindent{\bf \emph{Case 1}:} The local case \(s=1\). Write \(W=W(r)\) with \(r=|x|>0\). Then \(W'(r)=-A\kappa(\lambda+r)^{-\kappa-1}\) and \(W''(r)=A\kappa(\kappa+1)(\lambda+r)^{-\kappa-2}\), so
		\begin{align*}
			-\Delta_pW&=-|W'|^{p-2}\left((p-1)W''+\frac{N-1}rW'\right)\\
			&=(A\kappa)^{p-1}(\lambda+r)^{-(\kappa+1)(p-1)-1}\left[\frac{(N-1)(\lambda+r)}r-(p-1)(\kappa+1)\right].
		\end{align*}
		Since \((\kappa+1)(p-1)+1=\kappa(p-1)+p\) and
		\[
			\frac{(N-1)(\lambda+r)}r-(p-1)(\kappa+1)=N-p-(p-1)\kappa+\frac{(N-1)\lambda}r,
		\]
		we get \eqref{eq:lapW}. In particular \(|\Delta_pW(x)|\le C(1+|x|^{-1})\) on bounded sets, which is locally integrable because \(N\ge2\). Fix \(\varphi\in C^\infty_c(\RN)\) and \(\delta>0\). Integrating by parts in \(\RN\setminus B_\delta(0)\), where \(W\) is smooth and \(\nabla W\ne0\),
		\[
			\int_{\{|x|>\delta\}}|\nabla W|^{p-2}\nabla W\cdot\nabla\varphi\,dx=\int_{\{|x|>\delta\}}(-\Delta_pW)\,\varphi\,dx-\int_{\{|x|=\delta\}}\varphi\,|\nabla W|^{p-2}\nabla W\cdot\frac x{|x|}\,d\sigma.
		\]
		The boundary term is bounded by \(\|\varphi\|_{L^\infty}(A\kappa)^{p-1}|S^{N-1}|\delta^{N-1}\), which tends to \(0\) as \(\delta\to0^+\). Since \(|\nabla W|^{p-1}|\nabla\varphi|\) and \(|\Delta_pW||\varphi|\) are integrable, letting \(\delta\to0^+\) proves (c).

		\medskip

		\noindent{\bf \emph{Case 2}:} The nonlocal case \(s\in(0,1)\). Fix \(x\ne0\), let \(\rho\coloneqq\frac12\min\{|x|,1\}\) and \(\varepsilon\in(0,1)\). We split \((-\Delta_p)^s_\varepsilon W(x)\) into the contributions of \(\{|x-y|\ge\rho\}\) and of \(\{\varepsilon<|x-y|<\rho\}\).

		Since \(W\) is bounded and Lipschitz, the estimate \eqref{eq:farLip} in the proof of Lemma~\ref{lem:smooth} bounds the contribution of \(\{|x-y|\ge\rho\}\) by \(C(1+\rho^{p-1-sp}+|\log\rho|)\), and the same bound holds for the whole of \((-\Delta_p)^s_\varepsilon W(x)\) when \(\varepsilon\ge\rho\).

		On \(\{\varepsilon<|x-y|<\rho\}\) we apply Lemma~\ref{lem:local} in the ball \(B_\rho(x)\), which does not contain the origin. Here \(G=A\kappa(\lambda+|x|)^{-\kappa-1}\) and, by \eqref{eq:D2W}, \(M\le C_0(1+2|x|^{-1})\). If \(|x|\le1\), then \(\rho=|x|/2\), the number \(G\) lies between two positive constants, and \(M\le C|x|^{-1}\). Inserting these bounds in the definition of \(\Theta\) we find, in both cases \(p\ge2\) and \(1<p<2\),
		\[
			\Theta(G,M,\rho)\le C|x|^{p-1-sp}.
		\]
		If \(|x|\ge1\), then \(\rho=\frac12\), \(M\le3C_0\), and \(G\) is bounded from below by a positive constant on every bounded set, so \(\Theta(G,M,\rho)\) is bounded on bounded subsets of \(\{|x|\ge1\}\).

		By Lemma~\ref{lem:local}, \((-\Delta_p)^sW(x)\) exists for every \(x\ne0\), and for every bounded set \(B\subset\RN\) there is a constant \(C_B\) such that
		\[
			\sup_{0<\varepsilon<1}|(-\Delta_p)^s_\varepsilon W(x)|\le g(x)\coloneqq C_B\left(1+|x|^{p-1-sp}+|\log|x||\right)\qquad\text{for }x\in B\setminus\{0\}.
		\]
		The function \(g\) is integrable on \(B\), because \(p-1-sp>-N\). This proves (b), and (c) follows from \eqref{eq:trunc} and dominated convergence, as explained before Lemma~\ref{lem:local}.

		\medskip

		For the last assertion, we apply (c) to each operator and add. For every non-negative \(\varphi\in C^\infty_c(\RN)\) we obtain
		\[
			\E_\Lc(W,\varphi)=\int_{\RN}\Lc W\,\varphi\,dx\ge\int_{\RN}f\,\varphi\,dx.\qedhere
		\]
	\end{proof}

	Our last application concerns a cut-off function which is flat to infinite order at the points where its gradient vanishes. Let \(\Phi(t)\coloneqq e^{-1/t}\) for \(t>0\) and \(\Phi(t)\coloneqq0\) for \(t\le0\), and set
	\begin{equation}\label{eq:mu}
		\mu(t)\coloneqq\frac{\Phi(1-t)}{\Phi(1-t)+\Phi(t-\frac12)},\qquad t\in\R.
	\end{equation}
	The denominator is positive for every \(t\), so \(\mu\in C^\infty(\R)\). Moreover \(0\le\mu\le1\), \(\mu=1\) on \((-\infty,\frac12]\), \(\mu=0\) on \([1,\infty)\), and
	\[
		\mu'(t)=-\frac{\Phi'(1-t)\,\Phi\left(t-\frac12\right)+\Phi(1-t)\,\Phi'\left(t-\frac12\right)}{\left(\Phi(1-t)+\Phi(t-\frac12)\right)^2}<0\qquad\text{for }t\in(\tfrac12,1).
	\]
	All the derivatives of \(\mu\) vanish at \(t=\frac12\) and at \(t=1\). We shall use two quantitative forms of this flatness. First, for every \(\ell\ge1\) there is \(C_\ell>0\) such that
	\begin{equation}\label{eq:flat}
		|\mu(t)-\mu(t_0)|\le C_\ell|t-t_0|^\ell\qquad\text{for }t_0\in\{\tfrac12,1\}\text{ and all }t\in\R,
	\end{equation}
	by Taylor's formula for \(|t-t_0|\le1\) and because \(\mu\) is bounded. Second, writing
	\[
		\delta(t)\coloneqq\min\{t-\tfrac12,\,1-t\}\qquad\text{for }t\in(\tfrac12,1),
	\]
	there is a constant \(C_\mu\ge1\) such that, for every \(t\in(\frac12,1)\),
	\begin{equation}\label{eq:muP}
		C_\mu^{-1}\,\delta(t)^{-2}e^{-1/\delta(t)}\le|\mu'(t)|\le C_\mu\,\delta(t)^{-2}e^{-1/\delta(t)},\qquad|\mu''(t)|\le C_\mu\,\delta(t)^{-4}e^{-1/\delta(t)}.
	\end{equation}
	Indeed, for \(t\in[\frac34,1)\) we have \(\delta(t)=1-t\), \(\Phi'(1-t)=\delta(t)^{-2}e^{-1/\delta(t)}\) and \(|\Phi''(1-t)|\le\delta(t)^{-4}e^{-1/\delta(t)}\), while \(\Phi(t-\frac12)\) lies between \(e^{-4}\) and \(e^{-2}\) and has bounded derivatives. The two terms in the numerator of \(\mu'\) are positive, the first one gives the lower bound, and the upper bounds follow from the quotient rule. The interval \((\frac12,\frac34]\) is reduced to the previous one by the symmetry \(\mu(\frac32-t)=1-\mu(t)\).

	For \(p\ge2\), and for \(1<p<2\) with \(s<\frac{2(p-1)}p\), the boundedness of \((-\Delta_p)^s\mu(|\cdot|)\) follows from \cite[Proposition~2.12]{IMS16}. The flatness of \(\mu\) is what allows us to cover every \(p\in(1,\infty)\).

	\begin{lemma}[Flat profiles]\label{lem:flat}
		Let \(N\ge2\) and \(\nu\in\mathbb N\), and let \(c_0\in\R\), \(c_j>0\), \(\rho_j>0\) and \(z_j\in\RN\) for \(j\in\{1,\dots,\nu\}\), with the closed balls \(\overline{B_{\rho_j}(z_j)}\) pairwise disjoint. Let
		\[
			v(x)\coloneqq c_0+\sum_{j=1}^{\nu}c_j\,\mu\!\left(\frac{|x-z_j|}{\rho_j}\right),\qquad x\in\RN,
		\]
		where \(\mu\) is the cut-off function defined in \eqref{eq:mu}.
		Let \(p\in(1,\infty)\) and \(s\in(0,1]\).
		\begin{enumerate}[label=\textup{(\alph*)}]
			\item If \(s=1\), the vector field \(|\nabla v|^{p-2}\nabla v\) is of class \(C^1\) in \(\RN\). Hence \(-\Delta_pv\) is continuous and bounded in \(\RN\).
			\item If \(s\in(0,1)\), then \((-\Delta_p)^sv(x)\) exists for every \(x\in\RN\), and
			\[
				\sup_{x\in\RN,\ \varepsilon>0}|(-\Delta_p)^s_\varepsilon v(x)|<\infty.
			\]
		\end{enumerate}
		In both cases \((-\Delta_p)^sv\in L^\infty(\RN)\), and
		\begin{equation}\label{eq:Eflat}
			\E_{p,s}(v,\varphi)=\int_{\RN}(-\Delta_p)^sv(x)\,\varphi(x)\,dx\qquad\text{for every }\varphi\in C^\infty_c(\RN).
		\end{equation}
	\end{lemma}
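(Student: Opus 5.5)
The plan is to reduce everything to a single profile \(\mu(|x-z|/\rho)\) and to control it separately near and far from the critical spheres of \(\mu\).

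\emph{Reduction.} Since the closed balls are pairwise disjoint and each summand is constant (equal to \(c_j\) or \(0\)) outside \(B_{\rho_j}(z_j)\setminus B_{\rho_j/2}(z_j)\), every point \(x\) has a neighbourhood in which \(v\) coincides, up to an additive constant, with a single profile \(c_j\mu(|\cdot-z_j|/\rho_j)\), or in which \(v\) is constant. By \eqref{eq:scaling} we may then take \(z_j=0\), \(\rho_j=1\), \(c_j=1\) for the local analysis. The nonlocal contributions of the other balls are bounded, since \(v\) is bounded and the kernel is integrable away from the diagonal at a fixed positive distance.

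\emph{Local case.} Set \(m(x)=\mu(|x|)\), with \(r=|x|\) and \(\nabla m=\mu'(r)x/r\). The field is \(\Psi_p(\nabla m)=|\mu'(r)|^{p-2}\mu'(r)\,x/r\), smooth where \(\mu'(r)\ne0\), that is, on \(\frac12<r<1\), and zero elsewhere. At the spheres \(r=\frac12\) and \(r=1\) the function \(|\mu'|^{p-1}\) behaves like \(\delta^{-2(p-1)}e^{-(p-1)/\delta}\), and its derivative is bounded by \(C\delta^{-N'}e^{-(p-1)/\delta}\) by \eqref{eq:muP}. Both tend to \(0\) with all derivatives, so the extension by \(0\) is \(C^1\), in fact \(C^\infty\). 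Near the origin \(m\equiv1\), so there is no singularity at \(x=0\). Hence \(-\Delta_pv\) is continuous with compact support modulo constants, and therefore bounded. Identity \eqref{eq:Eflat} then follows by integration by parts, since the field is \(C^1\).

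\emph{Nonlocal case.} Fix \(x\) and split at \(\rho=\frac14\). The far part is bounded uniformly in \(x\) and \(\varepsilon\), exactly as in \eqref{eq:farLip}.

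For the near part there are three regimes.
\begin{enumerate}[label=\textup{(\arabic*)}]
\item If \(p\ge2\), Lemma~\ref{lem:local} applies directly, with \(G\) and \(M\) bounded, giving a uniform bound.
\item If \(1<p<2\) and \(x\) lies where \(m\) is locally constant, the integrand vanishes near the diagonal.
\item If \(1<p<2\) and \(x\) lies in the annulus \(\frac12<r<1\), let \(d=\delta(r)\).
\end{enumerate}
The main obstacle is regime (3), where \(G=|\mu'(r)|\) degenerates like \(d^{-2}e^{-1/d}\). I would apply Lemma~\ref{lem:local} in the ball of radius \(\rho'=cd\), which stays inside the annulus, where \(M\le Cd^{-4}e^{-1/(2d)}\) by \eqref{eq:muP} after shrinking the constants. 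Then \(\Theta\) is bounded by a product of a negative power of \(G\sim d^{-2}e^{-1/d}\) against factors of \(e^{-1/(2d)}\) from \(M\). This bookkeeping is tight: the ratio \(M/G\) need not be small.

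If it fails, the alternative is to estimate the region \(\rho'<|h|<\frac14\) directly. There the flatness \eqref{eq:flat}, in the form \(|m(x)-m(x+h)|\le C_\ell(d+|h|)^\ell\), together with \(|m(x)-m(x+h)|\le C\,\sup_{B_{|h|}(x)}|\nabla m|\,|h|\), lets us choose \(\ell\) large so that \(\int_{\rho'}^{1/4}(d+r)^{\ell(p-1)}r^{-1-sp}\,dr\) is bounded uniformly in \(d\). For the inner ball \(|h|<\rho'\), I would choose \(\rho'=d^{k}\) with \(k\) large, so that Taylor's expansion has \(M\rho'\ll G\), and use the quantitative bound of Lemma~\ref{lem:local}.

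Once the uniform bound on \((-\Delta_p)^s_\varepsilon v\) is established, the existence of the limit follows from Lemma~\ref{lem:local} at points where \(G>0\), and trivially at the other points. Identity \eqref{eq:Eflat} then follows from \eqref{eq:trunc} and dominated convergence, since \(v\) is bounded and Lipschitz.
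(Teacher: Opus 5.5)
Your architecture matches the paper's: reduction to one profile with a uniform separation radius, the \(C^1\) field for \(s=1\), and a separate analysis near the critical spheres. Treating \(p\ge2\) directly by Lemma~\ref{lem:local} with \(\rho=\frac14\) is a legitimate shortcut the paper does not take. For \(1<p<2\), however, the nonlocal case has two genuine holes.

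\textbf{Regime (2).} At a point with \(|x|<\frac12\) or \(|x|>1\), the integrand vanishes only for \(|h|<\eta\), where \(\eta\) is the distance from \(x\) to \(\{|y|=\frac12\}\cup\{|y|=1\}\), and \(\eta\) can be arbitrarily small. On \(\{\eta<|h|<\frac14\}\) a Lipschitz bound only gives \(C\int_\eta^{1/4}r^{p-2-sp}\,dr\). This is unbounded as \(\eta\to0^+\) when \(sp\ge p-1\), so no bound uniform in \(x\) follows. Moreover, the points of the two spheres belong to neither regime (2) nor regime (3). There \(v\) is not locally constant and \(G=0\), so Lemma~\ref{lem:local} is not available and the existence of the limit is not ``trivial''. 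What is missing is the paper's Case~1. For \(|x|\le\frac12\) or \(|x|\ge1\), \eqref{eq:flat} gives \(|v(x)-v(y)|\le\min\{1,C_\ell|x-y|^\ell\}\) for all \(y\). With \(\ell(p-1)>sp\), this yields at once a bound independent of \(x\) and \(\varepsilon\), and the absolute integrability that gives the principal value.

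\textbf{Regime (3).} Your first attempt fails, as you feared. With \(\rho'=cd\) and \(M\le Cd^{-4}e^{-1/(2d)}\), the term \(M^pG^{-1}(\rho')^{2p-1-sp}\) of \(\Theta\) is only bounded by a power of \(d\) times \(e^{(1-p/2)/d}\), which blows up since \(p<2\). The fallback \(\rho'=d^k\) does work, but the fact that makes it work is absent from your text. For \(|y-x|<d^k\) with \(k\ge2\) and \(d\) small, one has \(d/2\le\delta(|y|)\le d+d^k\) and \(\frac1d-\frac1{d+d^k}\le d^{k-2}\le1\). Hence \(e^{-1/\delta(|y|)}\le e\,e^{-1/d}\) and \(M\le Cd^{-4}e^{-1/d}\), with the \emph{same} exponential as \(G\ge C^{-1}d^{-2}e^{-1/d}\). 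With the bound \(M\lesssim d^{-4}e^{-1/(2d)}\) that you actually wrote, the claim \(M\rho'\ll G\) is false for every \(k\). Once the exponentials match, \(\Theta(G,M,d^k)\le Cd^{-4p}e^{-(p-1)/d}\), which is bounded. This is precisely the paper's choice \(\rho=\delta^2\). (The choice \(\rho'=cd\) can also be rescued by taking \(c<p-1\) and using the sharp bound \(M\le Cd^{-4}e^{-1/((1+c)d)}\).)

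Your estimate of \(\{d^k<|h|<\frac14\}\) by flatness alone, with \(\ell(p-1)\ge ksp\), is correct. It merges the paper's two outer regions into one: \(\delta^2\le|x-y|<\frac\delta2\), handled there with the exponentially small gradient, and \(|x-y|\ge\frac\delta2\), handled by flatness.
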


	\begin{proof}
		We first reduce the proof to one profile. In case (a) the fields \(|\nabla v_j|^{p-2}\nabla v_j\) of the functions \(v_j\coloneqq\mu(|\cdot-z_j|/\rho_j)\) have disjoint supports, so it is enough to consider one of them. In case (b), let \(2d_0>0\) be the smallest distance between two of the balls (\(d_0\coloneqq1\) if \(\nu=1\)). For every \(x\in\RN\) the ball \(B_{d_0}(x)\) meets at most one of them, say the \(j\)-th one, so \(v=c_0+c_jv_j\) in \(B_{d_0}(x)\). Since \(v\) and \(v_j\) are bounded, the contributions of \(\{|x-y|\ge d_0\}\) to \((-\Delta_p)^s_\varepsilon v(x)\) and to \((-\Delta_p)^s_\varepsilon(c_jv_j)(x)\) are bounded by a constant independent of \(x\) and \(\varepsilon\), and the contributions of \(\{|x-y|<d_0\}\) coincide. Hence it suffices to prove (b) for each \(c_jv_j\), and by \eqref{eq:scaling} we may assume in both cases that
		\[
			v(x)=\mu(|x|).
		\]

		\medskip

		\noindent{\bf \emph{Case \(s=1\)}:} In the annulus \(\{\frac12<|x|<1\}\) we have \(|\nabla v|^{p-2}\nabla v=-|\mu'(|x|)|^{p-1}\frac x{|x|}\), and \(\nabla v=0\) elsewhere. The function \(t\mapsto|\mu'(t)|^{p-1}\) is smooth in \((\frac12,1)\), since \(\mu'<0\) there, and by \eqref{eq:muP}
		\begin{align*}
			|\mu'(t)|^{p-1}&\le C\delta(t)^{-2(p-1)}e^{-(p-1)/\delta(t)},\\
			\left|\frac d{dt}|\mu'(t)|^{p-1}\right|&=(p-1)|\mu'(t)|^{p-2}|\mu''(t)|\le C\delta(t)^{-2p}e^{-(p-1)/\delta(t)}.
		\end{align*}
		For \(1<p<2\) the lower bound in \eqref{eq:muP} has been used. Both quantities tend to \(0\) as \(\delta(t)\to0^+\), so the function equal to \(|\mu'|^{p-1}\) in \((\frac12,1)\) and to \(0\) elsewhere is of class \(C^1\) in \(\R\). Hence the field is of class \(C^1\) in \(\RN\), and \eqref{eq:Eflat} follows from the divergence theorem.

		\medskip

		\noindent{\bf \emph{Case \(s\in(0,1)\)}:} Since \(0\le v\le1\), for every \(x\in\RN\), \(\varepsilon>0\) and \(\tau>0\),
		\begin{equation}\label{eq:far}
			2\int_{\{|x-y|\ge\tau\}}\frac{|v(x)-v(y)|^{p-1}}{|x-y|^{N+sp}}\,dy\le C\tau^{-sp}.
		\end{equation}
		Fix \(\ell\ge1\) with \(\ell(p-1)>sp\). We distinguish three cases according to the position of \(x\).

		\medskip

		\noindent{\bf \emph{Case 1}:} Points with \(|x|\le\frac12\) or \(|x|\ge1\). Suppose \(|x|\le\frac12\). If \(|y|\le\frac12\) then \(v(x)-v(y)=0\), and if \(|y|>\frac12\) then, by \eqref{eq:flat},
		\[
			|v(x)-v(y)|=|\mu(\tfrac12)-\mu(|y|)|\le C_\ell(|y|-\tfrac12)^\ell\le C_\ell|x-y|^\ell.
		\]
		The case \(|x|\ge1\) is analogous. Thus \(|v(x)-v(y)|\le\min\{1,C_\ell|x-y|^\ell\}\) for every \(y\), and the integrand of \((-\Delta_p)^s_\varepsilon v(x)\) is dominated by \(\min\{1,C_\ell|x-y|^\ell\}^{p-1}|x-y|^{-N-sp}\), whose integral over \(\RN\) is finite and independent of \(x\).

		\medskip

		\noindent{\bf \emph{Case 2}:} Points with \(\frac12<|x|<1\) and \(\delta(|x|)\ge d_1\), where \(d_1\in(0,\frac18)\) is a fixed number. On the compact set of such points, \(G=|\mu'(|x|)|\) is bounded from below by a positive constant and the second derivatives of \(v\) are bounded in \(B_{d_1/2}(x)\). We apply Lemma~\ref{lem:local} with \(\rho=d_1/2\) and \eqref{eq:far} with \(\tau=d_1/2\).

		\medskip

		\noindent{\bf \emph{Case 3}:} Points with \(\frac12<|x|<1\) and \(\delta\coloneqq\delta(|x|)<d_1\). We split \(\RN\) into the regions \(\{|x-y|<\delta^2\}\), \(\{\delta^2\le|x-y|<\frac\delta2\}\) and \(\{|x-y|\ge\frac\delta2\}\).

		In the first region we apply Lemma~\ref{lem:local} with \(\rho=\delta^2\). For \(y\in B_{\delta^2}(x)\) we have \(\delta(|y|)\in[\delta-\delta^2,\delta+\delta^2]\) and
		\[
			\frac1\delta-\frac1{\delta+\delta^2}=\frac1{1+\delta}\le1,
		\]
		so \eqref{eq:muP} gives \(G\ge C^{-1}\delta^{-2}e^{-1/\delta}\), \(G\le C\delta^{-2}e^{-1/\delta}\) and \(M\le C\delta^{-4}e^{-1/\delta}\). Inserting these bounds in the definition of \(\Theta\) we get
		\[
			\Theta(G,M,\delta^2)\le C\,\delta^{-4p}e^{-(p-1)/\delta},
		\]
		which is bounded for \(\delta\in(0,d_1)\).

		In the second region, every point \(y\) satisfies \(\delta(|y|)\in[\frac\delta2,\frac{3\delta}2]\), so \(|\nabla v|\le C\delta^{-2}e^{-2/(3\delta)}\) along the segment from \(x\) to \(y\), and \(|v(x)-v(y)|\le C\delta^{-2}e^{-2/(3\delta)}|x-y|\). The contribution of this region is therefore bounded by
		\[
			C\left(\delta^{-2}e^{-2/(3\delta)}\right)^{p-1}\int_{\delta^2}^1r^{p-2-sp}\,dr\le C\,\delta^{-2(p-1)-2sp}e^{-2(p-1)/(3\delta)},
		\]
		which is again bounded for \(\delta\in(0,d_1)\).

		In the third region, let \(x_*\) be the point of the nearer of the two spheres \(\{|y|=\frac12\}\), \(\{|y|=1\}\) closest to \(x\), so that \(|x-x_*|=\delta\le2|x-y|\). By \eqref{eq:flat},
		\[
			|v(x)-v(y)|\le|v(x)-v(x_*)|+|v(x_*)-v(y)|\le C_\ell\delta^\ell+C_\ell|x_*-y|^\ell\le C|x-y|^\ell,
		\]
		and we conclude as in Case 1.

		In the three cases the truncations \((-\Delta_p)^s_\varepsilon v(x)\) are bounded uniformly in \(x\) and \(\varepsilon\), and the integrand is absolutely integrable near \(y=x\) after symmetrization, so the limit as \(\varepsilon\to0^+\) exists. Finally, \eqref{eq:Eflat} follows from \eqref{eq:trunc} and dominated convergence.
	\end{proof}

	By Lemma~\ref{lem:flat}, the constant
	\begin{equation}\label{eq:Cstar}
		C^*\coloneqq\max_{1\le i\le n}\sup_{\RN}\left|(-\Delta_{p_i})^{s_i}\mu(|\cdot|)\right|
	\end{equation}
	is finite and depends only on \(N\) and \(\{(p_i,s_i)\}_{i=1}^n\).

\subsection{Comparison principle and strong minimum principle}\label{subsec:comparison}
	Finally, we establish a weak comparison principle and a strong minimum principle for \(\Lc\). Both are applied in the Liouville argument of Section~\ref{sec:liouville}. Recall the splitting \eqref{eq:FL} of the indices into fractional and local ones.

	\begin{proposition}[Comparison principle]\label{prop:comparison}
		Let \(\Omega\subset\RN\) be a bounded open set, and let \(u,v\in\DL(\Omega)\) be continuous in \(\overline\Omega\). Assume that \(\Lc u\ge\Lc v\) weakly in \(\Omega\) and that \(u\ge v\) in \(\RN\setminus\Omega\). Then \(u\ge v\) in \(\RN\).
	\end{proposition}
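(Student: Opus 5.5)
We test the weak inequality with the positive part of \(v-u\), in the spirit of \cite{DKP16,KKL19}, and then use the strict monotonicity of each \(\Psi_{p_i}\). Set \(w\coloneqq(v-u)^+\). Since \(u,v\) are continuous on \(\overline\Omega\) and \(u\ge v\) outside \(\Omega\), the function \(w\) vanishes in \(\RN\setminus\Omega\). For \(\eta>0\), the function \(w_\eta\coloneqq(v-u-\eta)^+\) therefore has support in the compact set \(\{x\in\overline\Omega:\ v-u\ge\eta\}\), which lies at positive distance from \(\partial\Omega\). We therefore choose an open set \(U\) with \(\operatorname{supp}w_\eta\subset U\Subset\Omega\).

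\textbf{Step 1: \(w_\eta\) is an admissible test function.} Since \(u,v\in\DL(\Omega)\), we have \(w_\eta\in W^{s_i,p_i}(U)\) for \(i\in F\) and \(w_\eta\in W^{1,p_i}(U)\) for \(i\in L\); truncation is a contraction in both seminorms. Because \(w_\eta\) vanishes outside a compact subset of \(U\) and is bounded, it also lies in \(W^{s_i,p_i}(\RN)\): the cross terms \(U\times U^c\) are controlled by the distance from \(\operatorname{supp} w_\eta\) to \(\partial U\). By the remark after Remark~\ref{rem:welldef}, the map \(\varphi\mapsto\E_{p_i,s_i}(\cdot,\varphi)\) extends continuously to such compactly supported functions. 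Mollify \(w_\eta\) to get \(\varphi_k\in C^\infty_c(U)\) with \(\varphi_k\ge0\) and \(\varphi_k\to w_\eta\) in every relevant norm. Passing to the limit in \(\E_\Lc(u,\varphi_k)\ge\E_\Lc(v,\varphi_k)\) yields
\[
\sum_{i=1}^n\left[\E_{p_i,s_i}(v,w_\eta)-\E_{p_i,s_i}(u,w_\eta)\right]\le0 .
\]

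\textbf{Step 2: each summand is non-negative.} For \(i\in L\), the summand is \(\int_{\{v-u>\eta\}}(\Psi_{p_i}(\nabla v)-\Psi_{p_i}(\nabla u))\cdot(\nabla v-\nabla u)\,dx\ge0\), with equality only if \(\nabla w_\eta=0\) a.e. For \(i\in F\), write \(a=u(x)-u(y)\), \(b=v(x)-v(y)\) and \(g=w_\eta(x)-w_\eta(y)\). The standard pointwise inequality \((\Psi_p(b)-\Psi_p(a))\,g\ge0\) holds because \(t\mapsto(t-\eta)^+\) is non-decreasing: if \(g>0\) then \(b-a\ge g>0\), and symmetrically when \(g<0\). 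Moreover the integrand is strictly positive whenever \(g\ne0\). The total sum is \(\le0\) and every summand is \(\ge0\), so all summands vanish.

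\textbf{Step 3: conclusion.} Pick an index \(i\). If \(i\in F\), the vanishing of the integrand forces \(w_\eta(x)=w_\eta(y)\) for a.e.\ \((x,y)\), so \(w_\eta\) is a.e.\ constant; since it vanishes outside \(\Omega\), it is \(0\). If \(i\in L\), then \(\nabla w_\eta=0\) a.e.\ and \(w_\eta\in W^{1,p_i}_0\), so again \(w_\eta=0\). By continuity, \(v-u\le\eta\) everywhere, and letting \(\eta\to0\) gives \(u\ge v\).

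The main obstacle is Step 1, namely the density argument that justifies testing with \(w_\eta\). The weak inequality is only assumed for smooth \(\varphi\), and the domain \(\DL(\Omega)\) gives only local regularity. The shift by \(\eta\), together with continuity up to \(\overline\Omega\), is exactly what keeps the support of the test function compactly inside \(\Omega\), so that \eqref{eq:formbound} controls the tail terms uniformly along the approximating sequence.
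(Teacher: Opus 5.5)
Your proposal is correct and follows the standard argument the paper invokes by citing \cite{DPQ17,LG26}: test the weak inequality with the positive part of \(v-u\), then use the strict monotonicity of each \(\Psi_{p_i}\) summand by summand. The shift to \((v-u-\eta)^+\), which keeps the test function compactly supported in \(\Omega\) so that the local regularity in \(\DL(\Omega)\) and the continuity statement following Remark~\ref{rem:welldef} suffice for the density step, is a detail the paper leaves implicit.
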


	\begin{proof}
		The proof is standard, see for instance \cite{DPQ17} for the fractional \(p\)-Laplacian and \cite{LG26} for the mixed operator \(-\Delta_p+(-\Delta_p)^s\). The argument is the same for every summand of \(\Lc\).
	\end{proof}

	\begin{proposition}[Strong minimum principle]\label{prop:smp}
		Let \(u\in\DL(\RN)\cap C(\RN)\) be a non-negative weak super-solution of \(\Lc u=0\) in \(\RN\). If \(u(x_0)=0\) for some \(x_0\in\RN\), then \(u\equiv0\) in \(\RN\).
	\end{proposition}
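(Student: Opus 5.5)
The plan is to argue by contradiction with a barrier built from the flat cut-off of Lemma~\ref{lem:flat}, using Proposition~\ref{prop:comparison} on a small ball. This follows the intro's remark that the strong minimum principle is proved directly for weak super-solutions via comparison and a flat cut-off.

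Assume \(u(x_0)=0\) but \(u\not\equiv0\). Since \(u\) is continuous and non-negative, the set \(\{u=0\}\) is closed, nonempty and not all of \(\RN\). So we can pick a point \(z\) with \(u(z)>0\), and then a ball \(B_{r}(z)\) with \(u\ge m>0\) on \(\overline{B_r(z)}\). Moving along a segment towards the zero set, we choose a ball \(B_R(y_0)\) with \(u>0\) in \(B_R(y_0)\) and a point \(\bar x\in\partial B_R(y_0)\) with \(u(\bar x)=0\). The inner-ball touching choice (an interior ball of \(\{u>0\}\) touching \(\{u=0\}\)) is what I would use. I would also keep a small ball \(\overline{B_{\rho}(z)}\subset\{u\ge m\}\) that is disjoint from and far from \(\bar x\); in the nonlocal case this is where the positive mass of \(u\) sits.

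The barrier is \(v\coloneqq \eta\,\mu(|x-\bar x|/\rho_1)+\varepsilon\,\chi\) type combination. Concretely, I would take \(v(x)=\varepsilon\bigl(\mu(|x-z|/\rho)-\mu(|x-\bar x|/\rho_1)\bigr)\)-like functions. An alternative that is simpler for the weak setting is Lemma~\ref{lem:flat} with \(c_0=0\), one bump \(c_1\mu(|x-z|/\rho)\) of height \(\le m\), tested in the annular region \(\Omega=B_{\rho_1}(\bar x)\) away from \(z\).

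In \(\Omega\), the bump is constant zero nearby, so each nonlocal term equals
\[
-2\int_{B_\rho(z)}\Psi_{p_i}(0-c_1\mu)\,|x-y|^{-N-s_ip_i}dy<0,
\]
bounded above by \(-c\,c_1^{p_i-1}\) on \(\Omega\). The local terms vanish there. Hence \(\Lc v\le0\le \Lc u\) weakly in \(\Omega\) by \eqref{eq:Eflat}. Outside \(\Omega\) we need \(u\ge v\). On \(\overline{B_\rho(z)}\) we have \(v\le c_1\le m\le u\), and elsewhere \(v=0\le u\). Proposition~\ref{prop:comparison} then gives \(u\ge v\) in \(\Omega\). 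This only gives \(u\ge0\) at \(\bar x\), which is no contradiction. So the barrier must be positive inside \(\Omega\).

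The fix is to add a small positive profile at \(\bar x\). Let
\[
v=c_1\mu(|x-z|/\rho)+\varepsilon\,\mu(|x-\bar x|/\rho_1),
\]
with the balls disjoint, and take \(\Omega=B_{\rho_1}(\bar x)\setminus\overline{B_{\rho_1/2}(\bar x)}\)? No: the comparison must be applied where the small bump lives. I take \(\Omega=B_{\rho_1}(\bar x)\). By scaling \eqref{eq:scaling} and the bound \eqref{eq:Cstar}, the small bump contributes at most \(C^*\varepsilon^{p_i-1}\rho_1^{-s_ip_i}\) to each term.

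For the fractional indices, the far bump contributes \(\le -c\,c_1^{p_i-1}\). Since the balls are separated, the Lemma~\ref{lem:flat} decomposition shows the contributions essentially add up, up to cross terms that I control by choosing \(\varepsilon\) small. Thus \(\Lc v\le0\) once \(\varepsilon\) is small, provided \(F\neq\emptyset\). On \(\partial\Omega\) and outside, \(v\le u\) requires \(\varepsilon\mu\le u\) on \(\RN\setminus B_{\rho_1}(\bar x)\), where \(\mu=0\) anyway. Comparison then yields \(u(\bar x)\ge\varepsilon>0\), a contradiction.

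For the purely local case \(F=\emptyset\), the far bump gives nothing. There I would instead use the classical Hopf-type radial barrier on the annulus \(B_R(y_0)\setminus \overline{B_{R/2}(y_0)}\). This is the function \(\varepsilon(e^{-\alpha|x-y_0|^2}-e^{-\alpha R^2})\), with \(\alpha\) large so that each \(-\Delta_{p_i}\) of it is \(\le0\) there (a pointwise computation converted to weak form by the divergence theorem, as in Lemma~\ref{lem:barrier}). Here \(\varepsilon\) is small compared with \(\min_{\partial B_{R/2}}u\). Comparison gives \(u\ge\) barrier in the annulus. At \(\bar x\) this yields only a gradient statement, but \(u\) is merely continuous, so instead I would use the Harnack-type consequence. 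Better, I would place \(\bar x\) in the interior by using the barrier to show that \(u>0\) on a neighbourhood of \(\partial B_R\), which contradicts \(u(\bar x)=0\) after slightly enlarging \(R\) (positivity propagates). The mixed case reduces to the fractional argument, since the local terms of the flat bumps vanish where the bump is constant. The main obstacle is the cross-interaction estimate ensuring the near bump's possibly positive contribution \(C^*\varepsilon^{p_i-1}\rho_1^{-s_ip_i}\) is beaten uniformly on \(\Omega\) by the negative far-bump term for every \(p_i\), including \(1<p_i<2\), where \(\Psi_{p_i}\) is not Lipschitz.
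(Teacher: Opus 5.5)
For \(F\neq\emptyset\) your final construction is the paper's own argument. It uses a small flat bump \(\varepsilon\mu(|x-\bar x|/\rho_1)\) centred at a zero, a fixed bump of height \(c_1\le m\) on a distant positivity ball \(B_\rho(z)\), comparison on \(B_{\rho_1}(\bar x)\), and \(\varepsilon\) small. The interior touching ball plays no role there (the zero \(x_0\) itself will do). The estimate you call the main obstacle is elementary. For \(x\in B_{\rho_1}(\bar x)\), split the integral defining \((-\Delta_{p_i})^{s_i}v(x)\) over three regions.
On \(\RN\setminus B_\rho(z)\), \(v\) coincides with the small bump \(v_1\). The omitted piece \(2\int_{B_\rho(z)}\Psi_{p_i}(v_1(x))|x-y|^{-N-s_ip_i}\,dy\) is non-negative, so this part is at most \((-\Delta_{p_i})^{s_i}v_1(x)\le C^*\varepsilon^{p_i-1}\rho_1^{-s_ip_i}\).
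On \(B_\rho(z)\setminus B_{\rho/2}(z)\) one has \(v(x)-v(y)\le\varepsilon\), which gives at most \(2\varepsilon^{p_i-1}|B_\rho|d^{-N-s_ip_i}\).
On \(B_{\rho/2}(z)\), if \(\varepsilon\le c_1/2\), one has \(v(x)-v(y)\le-c_1/2\), which gives at most \(-2(c_1/2)^{p_i-1}|B_{\rho/2}|D^{-N-s_ip_i}\). Here \(d\) and \(D\) are the smallest and largest distances between the two balls.
Only the monotonicity and homogeneity of \(\Psi_{p_i}\) enter, so the range \(1<p_i<2\) needs nothing extra. For \(i\in L\) the local term is not zero on the annulus where the small bump varies. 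It is at most \(C^*\varepsilon^{p_i-1}\rho_1^{-p_i}\), though, and is absorbed in the same way.

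The genuine gap is the purely local case \(F=\emptyset\). Your barrier \(\varepsilon(e^{-\alpha|x-y_0|^2}-e^{-\alpha R^2})\) on \(\{R/2<|x-y_0|<R\}\) vanishes on \(\partial B_R(y_0)\ni\bar x\), so comparison yields only \(u(\bar x)\ge0\). The classical Hopf contradiction needs \(\nabla u(\bar x)\), which a merely continuous \(u\) need not have. No Harnack inequality for \(\Lc\) is available here. And a barrier that vanishes on \(\partial B_R(y_0)\) cannot show \(u>0\) near \(\partial B_R(y_0)\). Your last sentence gestures at the remedy but does not state it: the barrier must be positive at the zero. So the comparison has to be run on an annulus around the positivity ball whose outer radius strictly exceeds the distance to the zero. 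This costs nothing, since comparison only needs \(u\ge v\) on the inner ball and outside the outer one.
The paper takes \(v=a\) on \(\overline{B_\rho(z)}\), \(v=\alpha(|x-z|^\beta-R^\beta)\) on \(\{\rho<|x-z|<R\}\) and \(v=0\) outside, with \(R>|x_0-z|\) and \(\beta<\min\{0,\min_i\frac{p_i-N}{p_i-1}\}\). Then every \(\cc_{1,p_i}(\beta)<0\), so each summand is separately non-positive, as Remark~\ref{rem:affine} requires. Your Gaussian would work equally well: take \(\varepsilon(e^{-\alpha|x-y_0|^2}-e^{-\alpha(R')^2})\) on \(\{R/2<|x-y_0|<R'\}\) with \(R'>R\), \(\alpha\) large depending on \(R\), and \(\varepsilon\le\min_{\overline{B_{R/2}(y_0)}}u\). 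Either way, the touching-ball construction is unnecessary.
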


	\begin{proof}
		Suppose that \(u(x_0)=0\) and \(u\not\equiv0\). By continuity there are \(z\in\RN\), \(\rho>0\) and \(a>0\) such that \(u\ge a\) in \(\overline{B_\rho(z)}\) and \(|z-x_0|>2\rho\). In both cases below we build \(v\) with \(\Lc v\le0\) weakly in a bounded open set \(\Omega\ni x_0\), \(v\le u\) in \(\RN\setminus\Omega\) and \(v(x_0)>0\). Proposition~\ref{prop:comparison} then gives \(u(x_0)\ge v(x_0)>0\), a contradiction.

		\medskip

		\noindent{\bf \emph{Case \(F\ne\emptyset\)}:} Fix \(r\in(0,\rho)\), let \(d\) and \(D\) be the smallest and the largest distance between a point of \(B_r(x_0)\) and a point of \(B_\rho(z)\), and, for \(\delta\in(0,a/2]\), let
		\[
			v(x)\coloneqq\delta\,\mu\!\left(\frac{|x-x_0|}r\right)+a\,\mu\!\left(\frac{|x-z|}\rho\right),\qquad\Omega\coloneqq B_r(x_0).
		\]
		By Lemma~\ref{lem:flat}, \(\Lc v\) exists pointwise and \eqref{eq:Eflat} holds. Let \(x\in B_r(x_0)\). For \(i\in F\), the first profile contributes at most \(C^*\delta^{p_i-1}r^{-s_ip_i}\), by \eqref{eq:scaling}. Since \(v(x)\le\delta\), the second profile contributes at most \(2\delta^{p_i-1}|B_\rho|d^{-N-s_ip_i}\) from \(B_\rho(z)\setminus B_{\rho/2}(z)\) and at most \(-2(a/2)^{p_i-1}|B_{\rho/2}|D^{-N-s_ip_i}\) from \(B_{\rho/2}(z)\), where \(v=a\ge v(x)+a/2\). For \(i\in L\), \(v\) coincides with the first profile near \(x\), so \(-\Delta_{p_i}v(x)\le C^*\delta^{p_i-1}r^{-p_i}\). The negative terms do not depend on \(\delta\), so \(\Lc v\le0\) in \(B_r(x_0)\) for \(\delta\) small. Moreover \(v\le a\le u\) in \(B_\rho(z)\) and \(v=0\) elsewhere outside \(B_r(x_0)\), and \(v(x_0)=\delta\).

		\medskip

				\noindent{\bf \emph{Case \(F=\emptyset\)}:} Fix \(R>|x_0-z|\) and
		\[
			\beta<\min\left\{0,\ \min_{1\le i\le n}\frac{p_i-N}{p_i-1}\right\}.
		\]
		For every \(i\) we have \(\beta<0\) and \((p_i-1)\beta+N-p_i<0\), so \eqref{eq:c1p} gives 
        \[
        \cc_{1,p_i}(\beta)=-\Psi_{p_i}(\beta)\left((p_i-1)\beta+N-p_i\right)<0.
        \]
       Note that \eqref{eq:c1p} is a direct computation for radial functions, and it does not use \(p_i<N\). Let \(\Omega\coloneqq\{\rho<|x-z|<R\}\), \(\alpha\coloneqq a/(\rho^\beta-R^\beta)\), which is positive because \(\beta<0\), and let \(v\) be equal to \(a\) in \(\overline{B_\rho(z)}\), to \(\alpha(|x-z|^\beta-R^\beta)\) in \(\Omega\), and to \(0\) outside \(B_R(z)\). By \eqref{eq:c1p} and an integration by parts, \(\E_\Lc(v,\varphi)=\sum_i\alpha^{p_i-1}\cc_{1,p_i}(\beta)\int_\Omega|x-z|^{\beta(p_i-1)-p_i}\varphi\,dx\le0\) for every non-negative \(\varphi\in C^\infty_c(\Omega)\). Clearly \(v\le u\) outside \(\Omega\), and \(v(x_0)>0\) because \(\rho<|x_0-z|<R\) and \(\beta<0\).
	\end{proof}

	\begin{remark}
		For a single fractional \(p\)-Laplacian, Proposition~\ref{prop:smp} is contained in \cite[Theorem~1.2]{DPQ17}. When \(F=\emptyset\) and \(u\) is of class \(C^1\), it is contained in the classical strong maximum principle for quasilinear operators in divergence form, see \cite{PS07,Vaz84}. The argument above only requires continuity.
	\end{remark}
%=====================================================================
\section{Fundamental solutions and barrier estimates}\label{sec:barrier}

	We begin with three technical results that will be used throughout the paper. We first recall the formula satisfied by the powers \(|x|^\beta\) for the local and nonlocal \(p\)-Laplacian. The second result establishes that both the local and nonlocal \(p\)-Laplacians of the barrier \(w_\kappa(x)=(1+|x|)^{-\kappa}\), with \(\kappa>0\), admit an explicit signed lower bound in terms of a power of \(1+|x|\), whose constant may have either sign. The third one gives a cruder lower bound in the range \(\kappa\ge N/(p-1)\), which is not covered by the second one. These estimates are the key ingredients of our existence result, and Lemma~\ref{lem:fundamental} is also used for the barriers of Section~\ref{sec:liouville}.

\subsection{Fundamental constant}
	For a single operator of order \(s\in(0,1]\) and \(\beta=\frac{sp-N}{p-1}\), the power \(v_\beta(x)=|x|^\beta\) is, up to a multiplicative constant, the fundamental solution, and the sign of the constant produced by a general power selects the admissible barriers. We collect the local and nonlocal cases in one statement, and refer to \cite{KV86} for the classical local computation and to \cite[Theorem~1.1 and Remark~1.1]{DPQ26} for the nonlocal one, where the constant is computed for the same operator as ours, without normalization constant.

	\begin{lemma}[Fundamental constant]\label{lem:fundamental}
		Let \(N\ge2\), \(p\in(1,\infty)\) and \(s\in(0,1]\) with \(sp<N\). For every \(\beta\in\left(-\frac N{p-1},\frac{sp}{p-1}\right)\) the function \(v_\beta(x)=|x|^\beta\) satisfies
		\begin{equation}\label{eq:fund}
			(-\Delta_p)^sv_\beta(x)=\cc_{s,p}(\beta)\,|x|^{\beta(p-1)-sp}\qquad\text{for every }x\in\RN\setminus\{0\},
		\end{equation}
		where \(\cc_{s,p}(\beta)\coloneqq(-\Delta_p)^sv_\beta(e_1)\in\R\) and \(e_1=(1,0,\dots,0)\). For \(s=1\),
		\begin{equation}\label{eq:c1p}
			\cc_{1,p}(\beta)=-\Psi_p(\beta)((p-1)\beta+N-p),
		\end{equation}
		and in this case \eqref{eq:fund} holds for every \(\beta\in\R\). In both cases the sign of the constant is
		\begin{equation}\label{eq:csign}
			\cc_{s,p}(\beta)
			\begin{cases}
				=0,&\text{if }\beta=0\text{ or }\beta=\frac{sp-N}{p-1},\\[2pt]
				>0,&\text{if }\frac{sp-N}{p-1}<\beta<0,\\[2pt]
				<0,&\text{otherwise.}
			\end{cases}
		\end{equation}
		In particular, for \(\beta=-\kappa\) with \(0<\kappa<\frac N{p-1}\), the constant \(\cc_{s,p}(-\kappa)\) is positive if \(0<\kappa<\frac{N-sp}{p-1}\), and \(\cc_{1,p}(-\kappa)=\kappa^{p-1}(N-p-(p-1)\kappa)\) for every \(\kappa>0\).
	\end{lemma}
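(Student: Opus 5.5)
The statement has three parts: the homogeneity identity \eqref{eq:fund}, the explicit local constant \eqref{eq:c1p}, and the sign table \eqref{eq:csign}. I will prove them in that order, the local case by direct computation and the nonlocal case by reducing to a one-dimensional integral whose sign can be read off.

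\textbf{Homogeneity.} The identity \eqref{eq:fund} follows from \eqref{eq:scaling} and rotation invariance. The function \(v_\beta\) is radial, and \(v_\beta(\cdot/\lambda)=\lambda^{-\beta}v_\beta\). Applying \eqref{eq:scaling} with \(\alpha=\lambda^{-\beta}\), and evaluating at \(x=\lambda e\) with \(|e|=1\), gives \((-\Delta_p)^sv_\beta(x)=|x|^{\beta(p-1)-sp}(-\Delta_p)^sv_\beta(e_1)\). It therefore remains to show that \(\cc_{s,p}(\beta)\) exists and is finite.

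For \(s=1\) this is the radial formula \(-\Delta_pv=-|v'|^{p-2}((p-1)v''+\frac{N-1}rv')\), exactly as in the proof of Lemma~\ref{lem:barrier}. With \(v'=\beta r^{\beta-1}\) it gives \(-\Psi_p(\beta)((p-1)(\beta-1)+N-1)r^{\beta(p-1)-p}\), which is \eqref{eq:c1p}, valid for every \(\beta\). The sign follows by inspection: \(\Psi_p(\beta)\) has the sign of \(\beta\), and \((p-1)\beta+N-p\) changes sign at \(\beta=\frac{p-N}{p-1}<0\). Setting \(\beta=-\kappa\) gives the last formula in the statement.

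\textbf{Nonlocal case \(s<1\): finiteness.} At \(e_1\) the gradient of \(v_\beta\) is nonzero when \(\beta\ne0\), so the near part of the integral converges by Lemma~\ref{lem:local}; the case \(\beta=0\) is trivial. Two further regions need checking.
\begin{itemize}
\item Near \(y=0\), the integrand behaves like \(|y|^{\beta(p-1)}\) when \(\beta<0\). This is integrable precisely because \(\beta>-\frac N{p-1}\).
\item At infinity, the integrand behaves like \(|y|^{\beta(p-1)-N-sp}\) when \(\beta>0\). This is integrable because \(\beta<\frac{sp}{p-1}\).
\end{itemize}

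\textbf{Nonlocal case: the sign.} This is the main obstacle. Two zeros are immediate: \(\cc_{s,p}(0)=0\) trivially, and \(\cc_{s,p}(\frac{sp-N}{p-1})=0\) is the fundamental solution computation. For the rest I would follow \cite{DPQ26}: write the integral in polar coordinates and reduce it, via the Kelvin-type change \(r\mapsto1/r\) on the part \(|y|>1\), to a one-dimensional integral of the form
\[
\int_0^1\Psi_p(1-t^\beta)\left(1-t^{N-sp-\beta(p-1)-... }\right)(\ldots)\,dt.
\]
Here the exponent is chosen so that the factor vanishes exactly at the fundamental exponent. The signs of \(1-t^\beta\) and of the second bracket then give \eqref{eq:csign}.

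If I cannot reproduce this reduction cleanly, the fallback is to cite \cite[Theorem~1.1 and Remark~1.1]{DPQ26} for the sign and for the strict monotonicity of \(\cc_{s,p}\) in the relevant ranges. This is legitimate because that paper uses the same unnormalized operator, so the sign table transfers directly.

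Finally, the particular case stated at the end is obtained by setting \(\beta=-\kappa\).
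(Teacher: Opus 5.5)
This is essentially the paper's route. The paper gives no proof of its own: it refers to the classical radial computation for \(s=1\) \cite{KV86} and to \cite[Theorem~1.1 and Remark~1.1]{DPQ26} for the nonlocal constant and its sign, which is exactly your local computation plus your fallback citation, and your scaling and finiteness checks are correct additions.

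Two small corrections:
\begin{itemize}
\item In the scaling step the amplitude should be \(\alpha=\lambda^{\beta}\), so that \(\alpha v_\beta(\cdot/\lambda)=v_\beta\).
\item If you carry out the Kelvin reduction yourself, the bracket is \(1-t^{\,sp-N-\beta(p-1)}\), not \(1-t^{\,N-sp-\dots}\). You must also keep the principal value at \(t=1\) through the change of variables, because for \(p(1-s)\le1\) the pieces over \(t<1\) and \(t>1\) do not converge separately.
\end{itemize}
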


\subsection{The pointwise bound}
	Throughout we write
	\[
		w_\kappa(x)\coloneqq(1+|x|)^{-\kappa},\qquad x\in\RN,\ \kappa>0.
	\]
	The local and the nonlocal orders yield the \emph{same} pointwise lower bound, so we state them together and split the proof into the two cases.

	\begin{proposition}[Pointwise bound]\label{prop:pointwise}
		Let \(N\ge2\), \(p\in(1,\infty)\), \(s\in(0,1]\) with \(sp<N\), and \(\kappa>0\). If \(s\in(0,1)\), assume in addition that \(\kappa<\frac N{p-1}\). Then the function \(w_\kappa\) satisfies
		\begin{equation}\label{eq:pointwise}
			(-\Delta_p)^sw_\kappa\ge\cc_{s,p}(-\kappa)\,w_\kappa^{Q(\kappa)}\qquad\text{in }\RN\setminus\{0\},
		\end{equation}
		where the exponent is defined by
		\[
			Q(\kappa)\coloneqq p-1+\tfrac{sp}\kappa,
		\]
		and \(\cc_{s,p}(-\kappa)\) is the constant of Lemma~\ref{lem:fundamental}. The constant \(\cc_{s,p}(-\kappa)\) is positive for \(\kappa<\frac{N-sp}{p-1}\), zero for \(\kappa=\frac{N-sp}{p-1}\), and negative for \(\kappa>\frac{N-sp}{p-1}\).
	\end{proposition}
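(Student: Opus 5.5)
The plan is to compare \(w_\kappa\) from below with a translated power \(|y+e|^{-\kappa}\) that touches it at the evaluation point \(x\), and then read off the bound from Lemma~\ref{lem:fundamental}. First note that \(w_\kappa^{Q(\kappa)}=(1+|x|)^{-\kappa(p-1)-sp}\). So \eqref{eq:pointwise} amounts to
\[
(-\Delta_p)^sw_\kappa(x)\ge\cc_{s,p}(-\kappa)(1+|x|)^{-\kappa(p-1)-sp}\qquad\text{for }x\ne0.
\]
The sign statement for \(\cc_{s,p}(-\kappa)\) is just \eqref{eq:csign} with \(\beta=-\kappa\). The hypothesis \(0<\kappa<N/(p-1)\) when \(s<1\), and no restriction when \(s=1\), places \(\beta=-\kappa\) in the range where Lemma~\ref{lem:fundamental} applies.

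\emph{Local case \(s=1\).} I would use formula \eqref{eq:lapW} of Lemma~\ref{lem:barrier} with \(A=\lambda=1\). It gives
\[
-\Delta_pw_\kappa(x)=\kappa^{p-1}(1+|x|)^{-\kappa(p-1)-p}\Bigl[N-p-(p-1)\kappa+\tfrac{N-1}{|x|}\Bigr].
\]
Dropping the positive term \((N-1)/|x|\), and recalling that \(\cc_{1,p}(-\kappa)=\kappa^{p-1}(N-p-(p-1)\kappa)\) by Lemma~\ref{lem:fundamental}, yields the claim at once, whatever the sign of the constant.

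\emph{Nonlocal case \(s\in(0,1)\).} Fix \(x\ne0\) and set \(e\coloneqq x/|x|\). Define \(V(y)\coloneqq|y+e|^{-\kappa}=v_{-\kappa}(y+e)\).
\begin{itemize}
\item Since \(|y+e|\le1+|y|\), we have \(V\ge w_\kappa\) everywhere.
\item Since \(|x+e|=1+|x|\), we have \(V(x)=w_\kappa(x)\).
\end{itemize}
Hence \(w_\kappa(x)-w_\kappa(y)\ge V(x)-V(y)\) for all \(y\). Because \(\Psi_p\) is increasing, the truncated operators satisfy \((-\Delta_p)^s_\varepsilon w_\kappa(x)\ge(-\Delta_p)^s_\varepsilon V(x)\) for every \(\varepsilon>0\).

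Now let \(\varepsilon\to0^+\). On the left, the principal value exists at \(x\ne0\) by Lemma~\ref{lem:barrier}(b). On the right, the point \(x+e\) satisfies \(|x+e|=1+|x|\ge1\), so \(V\) is smooth with nonvanishing gradient near \(x\). Moreover \(|V|^{p-1}\) is locally integrable near its singularity \(-e\), since \(\kappa(p-1)<N\), and \(V\) lies in the tail space. Therefore \((-\Delta_p)^sV(x)\) exists. By translation invariance and \eqref{eq:fund},
\[
(-\Delta_p)^sV(x)=\cc_{s,p}(-\kappa)|x+e|^{-\kappa(p-1)-sp}=\cc_{s,p}(-\kappa)(1+|x|)^{-\kappa(p-1)-sp}.
\]

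The only delicate point is the justification of these limits, in particular for \(1<p<2\). I expect it to follow from Lemma~\ref{lem:local} applied in a small ball around \(x\) avoiding both \(0\) and \(-e\), combined with the integrability of the singularity of \(V\), which is exactly where \(\kappa<N/(p-1)\) is needed.
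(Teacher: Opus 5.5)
Your proposal is correct and is essentially the paper's proof. The paper rotates so that \(x=|x|e_1\) and rescales by \(z=(y+e_1)/(1+|x|)\); this turns your touching comparison \(|y+e|\le1+|y|\), with equality at \(y=x\), into \eqref{eq:tri}, and it then reads off \(\cc_{s,p}(-\kappa)\) from \(v_{-\kappa}\) at \(e_1\). You instead keep the original variables and apply translation invariance together with \eqref{eq:fund} at \(x+e\). One simplification: the convergence of the right-hand truncations is already part of Lemma~\ref{lem:fundamental}, since the principal value of \(v_{-\kappa}\) exists at every nonzero point, so you do not need the extra appeal to Lemma~\ref{lem:local}.
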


	\begin{proof}
		We prove the local and the nonlocal case separately.

		\medskip

		\noindent{\bf \emph{Case 1}:} The local case \(s=1\). By \eqref{eq:lapW} with \(A=\lambda=1\),
		\[
			-\Delta_pw_\kappa(x)=\kappa^{p-1}(1+|x|)^{-\kappa(p-1)-p}\left[N-p-(p-1)\kappa+\tfrac{N-1}{|x|}\right]\qquad\text{for }x\in\RN\setminus\{0\}.
		\]
		Since \(\frac{N-1}{|x|}>0\), we deduce \eqref{eq:pointwise} with \(sp=p\) and \(\cc_{1,p}(-\kappa)=\kappa^{p-1}(N-p-(p-1)\kappa)\), for every \(\kappa>0\) and whatever the sign of this constant.

		\medskip

		\noindent{\bf \emph{Case 2}:} The nonlocal case \(s\in(0,1)\). Fix \(x\in\RN\setminus\{0\}\). By Lemma~\ref{lem:barrier}, the value \((-\Delta_p)^sw_\kappa(x)\) exists, that is,
		\[
			(-\Delta_p)^sw_\kappa(x)=\lim_{\varepsilon\to0^+}2\int_{\RN\setminus B_\varepsilon(x)}\frac{\Psi_p(w_\kappa(x)-w_\kappa(y))}{|x-y|^{N+sp}}\,dy.
		\]
		The \((p-1)\)-homogeneity \(\Psi_p(\alpha t)=\alpha^{p-1}\Psi_p(t)\) for \(\alpha\ge0\) gives
		\[
			\Psi_p(w_\kappa(x)-w_\kappa(y))=\frac1{(1+|x|)^{\kappa(p-1)}}\,\Psi_p\left(1-\left(\frac{1+|x|}{1+|y|}\right)^\kappa\right).
		\]
		By rotational invariance we may assume \(x/|x|=e_1\). The change of variables \(z=\frac1{1+|x|}(y+e_1)\), for which \(dy=(1+|x|)^Ndz\) and \(x-y=(1+|x|)(e_1-z)\), yields
		\begin{equation}\label{eq:rescaled}
			(1+|x|)^{\kappa(p-1)+sp}\,2\int_{\RN\setminus B_\varepsilon(x)}\frac{\Psi_p(w_\kappa(x)-w_\kappa(y))}{|x-y|^{N+sp}}\,dy=2\int_{\RN\setminus B_{\varepsilon'}(e_1)}\frac{\Psi_p\left(1-\left(\frac{1+|x|}{1+|(1+|x|)z-e_1|}\right)^\kappa\right)}{|e_1-z|^{N+sp}}\,dz,
		\end{equation}
		where \(\varepsilon'\coloneqq\varepsilon/(1+|x|)\). The triangle inequality \((1+|x|)|z|\le|(1+|x|)z-e_1|+1\) gives
		\begin{equation}\label{eq:tri}
			\frac{1+|x|}{1+|(1+|x|)z-e_1|}\le\frac1{|z|}\qquad\text{for every }z\ne0,
		\end{equation}
		and since \(\kappa>0\) and \(\Psi_p\) is non-decreasing, we obtain
		\[
			\Psi_p\left(1-\left(\frac{1+|x|}{1+|(1+|x|)z-e_1|}\right)^\kappa\right)\ge\Psi_p\left(1-|z|^{-\kappa}\right)\qquad\text{for every }z\ne0.
		\]
		The right-hand side is integrable against \(|e_1-z|^{-N-sp}\) on \(\RN\setminus B_{\varepsilon'}(e_1)\). Indeed, near \(z=0\) it behaves like \(-|z|^{-\kappa(p-1)}\), which is integrable precisely because \(\kappa(p-1)<N\), and it is bounded at infinity. Therefore the right-hand side of \eqref{eq:rescaled} is bounded from below by
		\[
			2\int_{\RN\setminus B_{\varepsilon'}(e_1)}\frac{\Psi_p(1-|z|^{-\kappa})}{|e_1-z|^{N+sp}}\,dz=2\int_{\RN\setminus B_{\varepsilon'}(e_1)}\frac{\Psi_p\left(v_{-\kappa}(e_1)-v_{-\kappa}(z)\right)}{|e_1-z|^{N+sp}}\,dz.
		\]
		Both truncations are taken over the same set, so we may let \(\varepsilon\to0^+\). Since \(-\kappa\in\left(-\frac N{p-1},0\right)\), Lemma~\ref{lem:fundamental} shows that the last integral converges to \((-\Delta_p)^sv_{-\kappa}(e_1)=\cc_{s,p}(-\kappa)\), and \eqref{eq:pointwise} follows.
	\end{proof}

	\begin{remark}[Behavior at the origin]\label{rem:origin}
		The barrier \(w_\kappa\) is smooth in \(\RN\setminus\{0\}\), but it is only Lipschitz at \(x=0\). Near the origin \((-\Delta_p)^sw_\kappa\) may be unbounded. For instance, for \(s\in(0,1)\) the integral defining \((-\Delta_p)^sw_\kappa(0)\) diverges to \(+\infty\) when \(sp\ge p-1\), and for \(s=1\) the expression \eqref{eq:lapW} behaves like \(|x|^{-1}\). This is irrelevant for our purposes, since by Lemma~\ref{lem:barrier} (with \(A=\lambda=1\)) a pointwise inequality in \(\RN\setminus\{0\}\) yields the corresponding weak inequality in all of \(\RN\).
	\end{remark}

\subsection{Large barrier exponents}
	When \(s\in(0,1)\) and \(\kappa\ge\frac N{p-1}\), the power \(|z|^{-\kappa(p-1)}\) is not integrable at the origin, and the comparison in the proof of Proposition~\ref{prop:pointwise} breaks down. In this range \(\kappa>\frac{N-sp}{p-1}\), so the operator applied to \(w_\kappa\) is expected to be negative somewhere. The following bound shows that it is nevertheless small at infinity: its main contribution comes from the mass of \(w_\kappa^{p-1}\) near the origin, and it decays like \(|x|^{-N-sp}\), up to a logarithm.

	\begin{proposition}[Large exponents]\label{prop:large}
		Let \(N\ge2\), \(p\in(1,\infty)\), \(s\in(0,1)\) with \(sp<N\), and \(\kappa\ge\frac N{p-1}\). Then there is a constant \(C=C(N,s,p,\kappa)>0\) such that
		\begin{equation}\label{eq:large}
			(-\Delta_p)^sw_\kappa(x)\ge-C\,(1+|x|)^{-(N+sp)}\log(e+|x|)\qquad\text{for every }x\in\RN\setminus\{0\}.
		\end{equation}
	\end{proposition}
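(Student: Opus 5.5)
Since \(w_\kappa\) is radially decreasing, the integrand \(\Psi_p(w_\kappa(x)-w_\kappa(y))\) is negative only for \(|y|<|x|\). Hence a lower bound only needs control of the negative part. For \(|x|\le 2\), Lemma~\ref{lem:barrier} already shows that the principal value exists. Repeating its proof (Lemma~\ref{lem:local} on \(B_\rho(x)\) with \(\rho=\frac12\min\{|x|,1\}\), together with \eqref{eq:farLip}) gives \(|(-\Delta_p)^sw_\kappa(x)|\le C(1+|x|^{p-1-sp}+|\log|x||)\). The right-hand side of \eqref{eq:large} is bounded below by a negative constant there, so this bound does not suffice as it stands near \(0\). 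I would handle small \(|x|\) by discarding the region \(|y|\ge|x|\), where the integrand is nonnegative. On \(|y|<|x|\) the difference satisfies \(|w_\kappa(x)-w_\kappa(y)|\le\kappa|x-y|\) with \(|x-y|\le 2|x|\). Its contribution is therefore at least \(-C\int_{B_{|x|}}|x-y|^{p-1-N-sp}\,dy\), after the principal value near \(y=x\) is treated via Lemma~\ref{lem:local}. This term is bounded by \(C(1+|x|^{p-1-sp})\) locally, but it may still blow up if \(p-1<sp\). So I would instead exploit symmetry: for \(y=x+h\), pair \(h\) with \(-h\) on the ball \(|h|<|x|/2\). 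Lemma~\ref{lem:local} bounds this paired part by \(C\Theta\le C|x|^{p-1-sp}\), and the remaining region \(\{|y|<|x|,\ |x-y|\ge|x|/2\}\) contributes at least \(-C|x|^{N}|x|^{p-1}|x|^{-N-sp}\). The negative part thus appears to be \(O(|x|^{p-1-sp})\) near \(0\), which can be unbounded. I would therefore also check the positive side more carefully: near the origin the region \(|y|>|x|\) gives a positive contribution of the same order \(|x|^{p-1-sp}\) with a better constant, as in Remark~\ref{rem:origin}. If this comparison proves delicate, I would fall back on the sharper pairing of \(h\) and \(-h\) across the hyperplane through \(x\) orthogonal to \(x\), using radial monotonicity. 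This small-\(|x|\) step is the part I am least sure of.

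For large \(|x|\ge 2\), I split the domain of \(y\) into three regions: \(|y|\ge|x|\), where the integrand is \(\ge0\) and can be dropped; the annular region \(|x|/2\le|y|<|x|\); and the ball \(|y|<|x|/2\).

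On the ball \(|y|<|x|/2\) we have \(|x-y|\ge|x|/2\) and \(w_\kappa(y)-w_\kappa(x)\le w_\kappa(y)\). Its contribution is therefore at least
\[
-C|x|^{-N-sp}\int_{B_{|x|/2}}(1+|y|)^{-\kappa(p-1)}\,dy .
\]
Because \(\kappa(p-1)\ge N\), the integral is bounded by \(C\log(e+|x|)\), with the logarithm arising exactly in the borderline case \(\kappa(p-1)=N\). This is the mass term that produces \eqref{eq:large}.

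On the annular region I would not simply take absolute values, since that gives only \(|x|^{-\kappa(p-1)-sp}\)-type terms mixed with the local singularity. Instead I use the rescaled form \eqref{eq:rescaled} restricted to \(|z|\ge1/2\) (in the rescaled variable). There \(w_\kappa\) is smooth with derivatives controlled by \(|x|^{-\kappa-1}\) and \(|x|^{-\kappa-2}\), and its gradient has size comparable to \(\kappa|x|^{-\kappa-1}\) near \(x\). Lemma~\ref{lem:local} with \(\rho=|x|/4\) then gives a bound \(C|x|^{-\kappa(p-1)-sp}\) for the symmetrized part near \(x\). The remainder of the annulus is bounded by \(|x|^{-\kappa(p-1)}\cdot|x|^N\cdot|x|^{-N-sp}\). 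Since \(\kappa(p-1)\ge N\), every one of these terms is \(O(|x|^{-N-sp})\).

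Adding the three regions yields \eqref{eq:large} for large \(|x|\). On the compact set \(\{\eta\le|x|\le2\}\), the bound follows from the local uniform estimates of Lemma~\ref{lem:barrier}. The main obstacle is the behavior near \(x=0\), which I would settle first.
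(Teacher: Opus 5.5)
\textbf{Verdict: there is a genuine gap near the origin.} Your argument for \(|x|\ge2\) and for the compact range \(\eta\le|x|\le2\) is essentially sound. The range \(0<|x|<\eta\), however, is left open, and the gap is real when \(sp\ge p-1\). (For \(p-1>sp\) the estimates behind Lemma~\ref{lem:barrier} are already bounded near the origin.)

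\emph{Why your small-\(|x|\) attempts do not close.} Everything you actually prove there is an estimate of absolute values. Lemma~\ref{lem:local} on \(B_{|x|/2}(x)\), plus the crude bound elsewhere, gives only \(|(-\Delta_p)^sw_\kappa(x)|\le C|x|^{p-1-sp}\) (or \(C|\log|x||\) when \(sp=p-1\)). This is unbounded, while \eqref{eq:large} requires \((-\Delta_p)^sw_\kappa\ge-C\) near \(0\).
\begin{itemize}
\item That the region \(|y|>|x|\) wins ``with a better constant'' is asserted, not shown. Remark~\ref{rem:origin} concerns only the point \(x=0\).
\item The reflection across the hyperplane through \(x\) orthogonal to \(x\) does not give a sign either. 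The reflected point \(y^*\) satisfies \(|y^*|\ge|y|\) and \(|x-y^*|=|x-y|\). But \(t\mapsto(1+t)^{-\kappa}\) is strictly convex, so \(w_\kappa(y)+w_\kappa(y^*)\) can exceed \(2w_\kappa(x)\) (take \(y=0\), \(y^*=2x\)). Then the paired integrand is negative.
\item You also cannot discard all of \(\{|y|\ge|x|\}\). The point \(x\) lies on the sphere \(\{|y|=|x|\}\), and for \(sp\ge p-1\) the one-sided integral over \(B_{|x|}\) near \(y=x\) diverges like \(\int_0r^{p-2-sp}\,dr\).
\end{itemize}
The same slip appears in your large-\(|x|\) splitting, where it is harmless. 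Drop only \(\{|y|\ge|x|\}\setminus B_{|x|/4}(x)\), and keep the whole ball \(B_{|x|/4}(x)\) for the symmetrized estimate. With that change your three bounds combine correctly through \(\kappa(p-1)\ge N\).

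\textbf{The missing idea.} The paper uses a one-sided comparison that removes the small-\(|x|\) issue altogether.
\begin{itemize}
\item Write \(x=|x|e_1\). The inequality \(|y+e_1|\le1+|y|\) gives \(w_\kappa(y)\le|y+e_1|^{-\kappa}\) for every \(y\), with equality at \(y=x\). So \(w_\kappa\) is touched from above at \(x\) by a shifted power whose singularity \(-e_1\) lies at distance \(1+|x|\ge1\) from \(x\).
\item In the variable \(z=(y+e_1)/(1+|x|)\) this is \eqref{eq:tri}. It yields \(\Psi_p(w_\kappa(x)-w_\kappa(y))\ge(1+|x|)^{-\kappa(p-1)}\Psi_p(1-|z|^{-\kappa})\). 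Here \(1-|z|^{-\kappa}\) is a fixed function, independent of \(x\), and smooth with non-vanishing gradient near \(e_1\).
\item On \(\{|z|\ge\frac12\}\) the principal value then converges to a constant \(K_1=K_1(N,s,p,\kappa)\), by Lemma~\ref{lem:smooth}.
\item On \(\{|z|<\frac12\}\) the paper bounds the integral crudely by the mass of \(w_\kappa^{p-1}\); this is what produces the logarithm.
\end{itemize}
The outcome is
\[
(1+|x|)^{\kappa(p-1)+sp}(-\Delta_p)^sw_\kappa(x)\ge K_1-C(1+|x|)^{\kappa(p-1)-N}\log(e+1+|x|),
\]
valid for all \(x\ne0\) at once. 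Near the origin \(1+|x|\) stays bounded, so no separate analysis is needed.
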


	\begin{proof}
		Fix \(x\ne0\), assume \(x/|x|=e_1\), and write \(R\coloneqq1+|x|>1\) and
		\[
			t(z)\coloneqq\frac R{1+|Rz-e_1|},\qquad z\in\RN.
		\]
		Identity \eqref{eq:rescaled} does not depend on the range of \(\kappa\). For \(\varepsilon'<\frac12\) the ball \(B_{\varepsilon'}(e_1)\) is contained in \(D_1\coloneqq\{|z|\ge\frac12\}\), and we split \(\RN\setminus B_{\varepsilon'}(e_1)\) into \(D_1\setminus B_{\varepsilon'}(e_1)\) and \(D_2\coloneqq\{|z|<\frac12\}\).

		On \(D_1\) we use \eqref{eq:tri} as in the proof of Proposition~\ref{prop:pointwise}: \(\Psi_p(1-t(z)^\kappa)\ge\Psi_p(g(z))\) with \(g(z)\coloneqq1-|z|^{-\kappa}\). The function \(g\) is smooth in \(\{|z|>\frac14\}\), with \(g(e_1)=0\) and \(\nabla g(e_1)=\kappa e_1\ne0\). Let \(\hat g\colon\RN\to\R\) be bounded and Lipschitz, with \(\hat g=g\) in \(\{|z|\ge\frac14\}\). Since \(g(e_1)=0\), for \(\varepsilon'<\frac12\) we have
		\[
			2\int_{D_1\setminus B_{\varepsilon'}(e_1)}\frac{\Psi_p(g(z))}{|e_1-z|^{N+sp}}\,dz=-(-\Delta_p)^s_{\varepsilon'}\hat g(e_1)+2\int_{D_2}\frac{\Psi_p(\hat g(e_1)-\hat g(z))}{|e_1-z|^{N+sp}}\,dz.
		\]
		The last integral is finite, because \(\hat g\) is bounded and \(|e_1-z|\ge\frac12\) in \(D_2\), and the first term converges as \(\varepsilon'\to0^+\) by Lemma~\ref{lem:smooth}, applied with \(U\) a small ball centered at \(e_1\). Hence the limit
		\[
			K_1\coloneqq\lim_{\varepsilon'\to0^+}2\int_{D_1\setminus B_{\varepsilon'}(e_1)}\frac{\Psi_p(g(z))}{|e_1-z|^{N+sp}}\,dz
		\]
		exists and is finite. It depends only on \(N\), \(s\), \(p\) and \(\kappa\).

		On \(D_2\) we use \(\Psi_p(1-t^\kappa)\ge-t^{\kappa(p-1)}\) and \(|e_1-z|\ge\frac12\). With the change of variables \(y=Rz-e_1\), for which \(|y|<\frac R2+1<2R\) when \(|z|<\frac12\), we get
		\begin{align*}
			2\int_{D_2}\frac{\Psi_p(1-t(z)^\kappa)}{|e_1-z|^{N+sp}}\,dz&\ge-2^{N+sp+1}R^{\kappa(p-1)-N}\int_{\{|y|<2R\}}(1+|y|)^{-\kappa(p-1)}\,dy\\
			&\ge-C\,R^{\kappa(p-1)-N}\log(e+R),
		\end{align*}
		where we used \((1+|y|)^{-\kappa(p-1)}\le(1+|y|)^{-N}\), since \(\kappa(p-1)\ge N\).

		The integral over \(D_2\) does not depend on \(\varepsilon'\). Letting \(\varepsilon\to0^+\) in \eqref{eq:rescaled}, which is possible by Lemma~\ref{lem:barrier}, we obtain
		\[
			R^{\kappa(p-1)+sp}(-\Delta_p)^sw_\kappa(x)\ge K_1-C\,R^{\kappa(p-1)-N}\log(e+R).
		\]
		Dividing by \(R^{\kappa(p-1)+sp}\) and using \(R^{-\kappa(p-1)-sp}\le R^{-N-sp}\), again because \(\kappa(p-1)\ge N\), we get
		\[
			(-\Delta_p)^sw_\kappa(x)\ge-(|K_1|+C)R^{-N-sp}\log(e+R),
		\]
		which gives \eqref{eq:large}, since \(\log(e+R)\le2\log(e+|x|)\).
	\end{proof}

	\begin{remark}
		If \(\kappa(p-1)>N\), the integral over \(\{|y|<2R\}\) is bounded and the logarithm in \eqref{eq:large} can be dropped. In both cases the decay rate \(N+sp\) is larger than \(N\), and this is all we need: by \eqref{eq:key}, the term that we keep in the existence argument decays like \(|x|^{-a}\) with \(a<N\).
	\end{remark}

%=====================================================================
\section{Existence of super-solutions}\label{sec:existence}

	In this section we prove part (ii) of Theorem~\ref{thm:main}, in the more precise form of Theorem~\ref{thm:existence} below. Recall the decay exponents \(\kappa_i\) from \eqref{eq:kappai}, which are positive since \(s_ip_i<N\), the functions \(Q_i\) and the number \(\kb\) from \eqref{eq:Qi}, and \(\qL\) from \eqref{eq:qL}. We also set
	\[
		\ku\coloneqq\min_{1\le i\le n}\kappa_i,\qquad\qz\coloneqq\min_{1\le i\le n}Q_i(\ku),
	\]
	for the threshold reached by the one-parameter barriers \(A(1+|x|)^{-\kappa}\) with \(\kappa\le\ku\) (Remark~\ref{rem:oneparam}).

\subsection{The thresholds}

	\begin{lemma}[Thresholds]\label{lem:thresholds}
		For every \(i\in\{1,\dots,n\}\),
		\[
			\kb\,Q_i(\kb)-N=(p_i-1)(\kb-\kappa_i)\ \ge\ 0,
		\]
		with equality if and only if \(\kappa_i=\kb\). Hence \(\min_iQ_i(\kb)=N/\kb=\qL\), and the minimum is attained exactly by the operators with the largest decay exponent.
	\end{lemma}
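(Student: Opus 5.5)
The statement is a direct algebraic computation, so the plan is simply to expand the definitions and read off the sign. First, by the definition \eqref{eq:Qi}, \(Q_i(\kb)=p_i-1+s_ip_i/\kb\), so multiplying by \(\kb>0\) gives
\[
	\kb\,Q_i(\kb)-N=(p_i-1)\kb+s_ip_i-N.
\]
Next, I would use the definition \eqref{eq:kappai} of \(\kappa_i\) in the form \((p_i-1)\kappa_i=N-s_ip_i\); this is exactly the relation behind \eqref{eq:key}. Substituting \(N-s_ip_i=(p_i-1)\kappa_i\) turns the right-hand side into \((p_i-1)(\kb-\kappa_i)\), which is the claimed identity.

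For the sign, \(p_i>1\) and \(\kb=\max_j\kappa_j\ge\kappa_i\), so the product is non-negative. It vanishes if and only if \(\kb=\kappa_i\), because \(p_i-1\neq0\). This proves the inequality and the characterization of equality.

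Finally, dividing by \(\kb>0\) gives \(Q_i(\kb)\ge N/\kb\) for every \(i\), with equality exactly for the indices where \(\kappa_i=\kb\). At least one such index exists, since the maximum defining \(\kb\) is attained. Hence \(\min_iQ_i(\kb)=N/\kb\), which equals \(\qL\) by \eqref{eq:qL}, and the minimizers are precisely the operators with the largest decay exponent.

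There is no real obstacle here. The only point to keep in mind is that positivity of \(\kappa_i\), and hence of \(\kb\), comes from the standing assumption \(s_ip_i<N\); this is what allows dividing by \(\kb\) and guarantees the direction of the inequality.
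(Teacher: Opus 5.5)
Your proposal is correct and matches the paper's proof: both substitute \(N-s_ip_i=(p_i-1)\kappa_i\) into \(\kb\,Q_i(\kb)-N=\kb(p_i-1)+s_ip_i-N\) to obtain \((p_i-1)(\kb-\kappa_i)\). You also write out the sign, equality and minimum consequences (using \(p_i>1\), \(\kb>0\) and that the maximum is attained), which the paper leaves implicit.
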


	\begin{proof}
		Since \(N-s_ip_i=(p_i-1)\kappa_i\), we have \(\kb\,Q_i(\kb)-N=\kb(p_i-1)+s_ip_i-N=(p_i-1)(\kb-\kappa_i)\).
	\end{proof}

\subsection{Proof of the existence theorem}
	The idea was explained in the introduction. We insert the rescaled barrier \(W(x)=A(\lambda+|x|)^{-\kappa}\) in \(\Lc\) and keep one summand with a positive constant that decays slowly enough. The summands with negative constants decay faster than \(|x|^{-N}\), by \eqref{eq:key} and Proposition~\ref{prop:large}, and are absorbed by choosing \(\lambda\) large.

	\begin{theorem}[Existence]\label{thm:existence}
		Let \(N\ge2\), \(n\ge1\), \(p_i\in(1,\infty)\) and \(s_i\in(0,1]\) for \(i\in\{1,\dots,n\}\), with \(\max_is_ip_i<N\), and let \(q>\qL=N/\kb\). Choose an index \(k\) such that \(N/\kappa_k<q\) (for instance, \(\kappa_k=\kb\)). Then, for
		\begin{itemize}
			\item every barrier exponent \(\kappa\in\left[\dfrac{s_kp_k}{q-p_k+1},\ \kappa_k\right)\), and
			\item every amplitude \(A\in\left(0,\ \left(\tfrac12\cc_{s_k,p_k}(-\kappa)\right)^{1/(q-p_k+1)}\right]\),
		\end{itemize}
		there is \(\lambda_0\ge1\), depending only on \(N\), \(k\), \(\kappa\), \(A\) and \(\{(p_i,s_i)\}_{i=1}^n\), such that for every \(\lambda\ge\lambda_0\) the function
		\[
			W(x)\coloneqq A(\lambda+|x|)^{-\kappa}
		\]
		is a positive weak super-solution of \(\Lc u=u^q\) in \(\RN\).
	\end{theorem}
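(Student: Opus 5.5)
The plan is to compute $\Lc W$ pointwise in $\RN\setminus\{0\}$ by rescaling the estimates of Section~\ref{sec:barrier}, and to show $\Lc W\ge W^q$ there. The last assertion of Lemma~\ref{lem:barrier}, with $f=W^q\in L^1_\loc(\RN)$ since $W$ is bounded, then turns this into the weak inequality. Positivity of $W$ is clear.

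\textbf{Step 1: rescaling.} Write $W(x)=\alpha\,w_\kappa(x/\lambda)$ with $\alpha=A\lambda^{-\kappa}$. By \eqref{eq:scaling},
\[
(-\Delta_{p_i})^{s_i}W(x)=A^{p_i-1}\lambda^{-\kappa(p_i-1)-s_ip_i}\,\big[(-\Delta_{p_i})^{s_i}w_\kappa\big](x/\lambda).
\]
Set $a_i\coloneqq\kappa(p_i-1)+s_ip_i$. By \eqref{eq:key}, $a_i=N+(p_i-1)(\kappa-\kappa_i)$. Since $\lambda^{-a_i}(1+|x|/\lambda)^{-a_i}=(\lambda+|x|)^{-a_i}$, Proposition~\ref{prop:pointwise} applies to every local index and to every fractional index with $\kappa<N/(p_i-1)$. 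For these indices it gives
\[
(-\Delta_{p_i})^{s_i}W(x)\ge\cc_{s_i,p_i}(-\kappa)A^{p_i-1}(\lambda+|x|)^{-a_i}.
\]
For fractional indices with $\kappa\ge N/(p_i-1)$, Proposition~\ref{prop:large} gives instead
\[
(-\Delta_{p_i})^{s_i}W(x)\ge -CA^{p_i-1}\lambda^{N-\kappa(p_i-1)}(\lambda+|x|)^{-N-s_ip_i}\log(e+|x|/\lambda).
\]

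\textbf{Step 2: the good term $k$.} Since $\kappa<\kappa_k<N/(p_k-1)$, Lemma~\ref{lem:fundamental} gives $\cc_{s_k,p_k}(-\kappa)>0$, and $a_k<N$. The choice of $A$ is equivalent to $A^q\le\frac12\cc_{s_k,p_k}(-\kappa)A^{p_k-1}$. The lower bound on $\kappa$ is equivalent to $\kappa q\ge a_k$; here we use that $q>N/\kappa_k=Q_k(\kappa_k)>p_k-1$. The interval for $\kappa$ is nonempty because $s_kp_k/(q-p_k+1)<\kappa_k$ is equivalent to $q>Q_k(\kappa_k)$. Since $\lambda\ge1$, we get $(\lambda+|x|)^{-\kappa q}\le(\lambda+|x|)^{-a_k}$. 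Hence half of the $k$-th term already dominates $W^q$:
\[
\tfrac12\cc_{s_k,p_k}(-\kappa)A^{p_k-1}(\lambda+|x|)^{-a_k}\ge A^q(\lambda+|x|)^{-\kappa q}=W^q .
\]

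\textbf{Step 3: absorbing the remaining terms.} It remains to show that each index $i\ne k$ whose lower bound is negative contributes at least $-\frac1{2n}\cc_{s_k,p_k}(-\kappa)A^{p_k-1}(\lambda+|x|)^{-a_k}$ once $\lambda\ge\lambda_0$. Indices with nonnegative constant are simply dropped.

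If $\cc_{s_i,p_i}(-\kappa)<0$ and Proposition~\ref{prop:pointwise} applies, then $\kappa>\kappa_i$, so $a_i>N>a_k$. The ratio of the two terms is then at most $C(\lambda+|x|)^{a_k-a_i}\le C\lambda^{a_k-a_i}$, which tends to $0$ as $\lambda\to\infty$.

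In the large-exponent case, fix $\delta>0$ small and use $\log(e+r/\lambda)\le C_\delta(1+r/\lambda)^{\delta}$. The ratio is then bounded by
\[
C\lambda^{N-\kappa(p_i-1)-\delta}(\lambda+|x|)^{a_k-N-s_ip_i+\delta}\le C\lambda^{a_k-\kappa(p_i-1)-s_ip_i}.
\]
The exponent is negative because $a_k<N\le\kappa(p_i-1)$, so the ratio again tends to $0$ uniformly in $x$.

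This yields $\lambda_0$ depending only on the listed data, and summing all terms gives $\Lc W\ge W^q$ in $\RN\setminus\{0\}$. The main obstacle is this uniform-in-$x$ absorption, in particular the logarithmic factor in the large-exponent case. The device above handles it: trade the logarithm for a small power, and use the strict gap $a_k<N$ to keep the combined exponent negative.
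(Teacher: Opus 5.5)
Your proposal is correct and follows essentially the paper's proof. You rescale via \eqref{eq:scaling}, keep the $k$-th term with $\cc_{s_k,p_k}(-\kappa)>0$ and $a_k<N$, and absorb the negative terms (from Proposition~\ref{prop:pointwise} when $\kappa>\kappa_i$, and from Proposition~\ref{prop:large} in the large-exponent range) by taking $\lambda$ large. You then compare with $W^q$ using $a_k\le\kappa q$ and pass to the weak formulation with Lemma~\ref{lem:barrier}. The differences are only cosmetic: the paper treats both negative cases at once through $b_i=\min\{a_i,N+s_ip_i\}$ and a single gap $\gamma$ with $\log(e+t)\le C_\gamma t^\gamma$. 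You instead keep the factor $\lambda^{N-\kappa(p_i-1)}$ explicit and trade the logarithm for a small power separately in each case.
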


	\begin{proof}
		Fix \(q>\qL\), an index \(k\) with \(N/\kappa_k<q\), and \(\kappa\), \(A\) as in the statement. The interval for \(\kappa\) is non-empty. Indeed, \(q>N/\kappa_k=Q_k(\kappa_k)>p_k-1\), so \(q-p_k+1>0\), and for every \(\tau>0\)
		\begin{equation}\label{eq:equiv}
			\frac{s_kp_k}{q-p_k+1}<\tau\iff p_k-1+\frac{s_kp_k}\tau<q\iff Q_k(\tau)<q,
		\end{equation}
		which holds for \(\tau=\kappa_k\). Let
		\[
			\Nm\coloneqq\{i\in\{1,\dots,n\}:\ \kappa_i<\kappa\},\qquad a_i\coloneqq\kappa(p_i-1)+s_ip_i\quad(1\le i\le n).
		\]
		For \(i\notin\Nm\) we have \(\kappa\le\kappa_i<\frac N{p_i-1}\), so Lemma~\ref{lem:fundamental} and Proposition~\ref{prop:pointwise} apply, and
		\[
			\cc_{s_k,p_k}(-\kappa)>0,\qquad\cc_{s_i,p_i}(-\kappa)\ge0\ \text{ for }i\notin\Nm,
		\]
		where the first inequality holds because \(\kappa<\kappa_k\).

		\medskip

		\noindent{\bf \emph{Step 1}:} Pointwise bound for each operator. Since \(W(x)=A\lambda^{-\kappa}w_\kappa(x/\lambda)\), the scaling identity \eqref{eq:scaling} gives, for every \(i\) and every \(x\ne0\),
		\begin{equation}\label{eq:Wscal}
			(-\Delta_{p_i})^{s_i}W(x)=A^{p_i-1}\lambda^{-a_i}\left[(-\Delta_{p_i})^{s_i}w_\kappa\right](\tfrac x\lambda).
		\end{equation}
		For \(i\notin\Nm\), Proposition~\ref{prop:pointwise} gives
		\[
			(-\Delta_{p_i})^{s_i}W(x)\ge A^{p_i-1}\cc_{s_i,p_i}(-\kappa)\,(\lambda+|x|)^{-a_i}\ge0,
		\]
		with a positive right-hand side for \(i=k\). For \(i\in\Nm\) we set \(b_i\coloneqq\min\{a_i,\,N+s_ip_i\}\). If \(s_i=1\) or \(\kappa(p_i-1)<N\), Proposition~\ref{prop:pointwise} applies, and its constant is negative. If \(s_i\in(0,1)\) and \(\kappa(p_i-1)\ge N\), Proposition~\ref{prop:large} applies. In both cases there is \(C_i>0\) such that
		\[
			\left[(-\Delta_{p_i})^{s_i}w_\kappa\right](y)\ge-C_i(1+|y|)^{-b_i}\log(e+|y|)\qquad\text{for }y\ne0.
		\]
		Since \(b_i\le a_i\) and \(\lambda\ge1\), \eqref{eq:Wscal} gives
		\[
			(-\Delta_{p_i})^{s_i}W(x)\ge-C_iA^{p_i-1}\lambda^{b_i-a_i}(\lambda+|x|)^{-b_i}\log(e+\tfrac{|x|}\lambda)\ge-C_iA^{p_i-1}(\lambda+|x|)^{-b_i}\log(e+\lambda+|x|).
		\]

		\medskip

		\noindent{\bf \emph{Step 2}:} Comparison of the decay rates. By \eqref{eq:key},
		\[
			a_k<N<b_i\qquad\text{for every }i\in\Nm,
		\]
		because \(a_i>N\) for \(i\in\Nm\) and \(N+s_ip_i>N\). We set
		\[
			\gamma\coloneqq\tfrac12\min_{i\in\Nm}(b_i-a_k)>0,
		\]
		with \(\gamma\coloneqq1\) if \(\Nm=\emptyset\). There is \(C_\gamma>0\) such that \(\log(e+t)\le C_\gamma t^\gamma\) for every \(t\ge1\).

		\medskip

		\noindent{\bf \emph{Step 3}:} Absorption. We drop the non-negative terms with \(i\notin\Nm\cup\{k\}\). For \(i\in\Nm\), since \(t\coloneqq\lambda+|x|\ge\lambda\ge1\),
		\[
			t^{-b_i}\log(e+t)\le C_\gamma\,t^{-b_i+\gamma}=C_\gamma\,t^{-a_k}\,t^{-(b_i-a_k-\gamma)}\le C_\gamma\,\lambda^{-\gamma}\,t^{-a_k}.
		\]
		Therefore, for every \(x\ne0\),
		\[
			\Lc W(x)\ge(\lambda+|x|)^{-a_k}\left(A^{p_k-1}\cc_{s_k,p_k}(-\kappa)-\lambda^{-\gamma}C_\gamma\sum_{i\in\Nm}A^{p_i-1}C_i\right).
		\]
		We choose
		\[
			\lambda_0\coloneqq\max\left\{1,\ \left(\frac{2C_\gamma\sum_{i\in\Nm}A^{p_i-1}C_i}{A^{p_k-1}\cc_{s_k,p_k}(-\kappa)}\right)^{1/\gamma}\right\}.
		\]
		Then, for \(\lambda\ge\lambda_0\),
		\begin{equation}\label{eq:absorb}
			\Lc W(x)\ge\tfrac12A^{p_k-1}\cc_{s_k,p_k}(-\kappa)\,(\lambda+|x|)^{-a_k}\qquad\text{for every }x\ne0.
		\end{equation}

		\medskip

		\noindent{\bf \emph{Step 4}:} Comparison with \(W^q\). By \eqref{eq:equiv}, the condition \(\kappa\ge\frac{s_kp_k}{q-p_k+1}\) is equivalent to \(a_k\le\kappa q\). Since \(\lambda+|x|\ge1\), this gives \((\lambda+|x|)^{-a_k}\ge(\lambda+|x|)^{-\kappa q}\). The condition on \(A\) means \(A^{q-p_k+1}\le\frac12\cc_{s_k,p_k}(-\kappa)\). Hence, by \eqref{eq:absorb},
		\[
			\Lc W(x)\ge\tfrac12A^{p_k-1}\cc_{s_k,p_k}(-\kappa)\,(\lambda+|x|)^{-\kappa q}\ge A^q(\lambda+|x|)^{-\kappa q}=W(x)^q\qquad\text{for every }x\in\RN\setminus\{0\}.
		\]

		\medskip

		\noindent{\bf \emph{Step 5}:} Weak formulation. The function \(W\) is positive, bounded and Lipschitz, so \(W\in\DL(\RN)\) and \(W^q\in L^1_\loc(\RN)\). By Lemma~\ref{lem:barrier}, the pointwise inequality of Step~4 implies that \(\Lc W\ge W^q\) weakly in \(\RN\).
	\end{proof}

	\begin{remark}[One-parameter barriers]\label{rem:oneparam}
		If \(\kappa\le\ku\), then \(\Nm=\emptyset\), no absorption is needed, and one may take \(\lambda=1\) and any amplitude with \(A^{q-p_k+1}\le\cc_{s_k,p_k}(-\kappa)\). For \(\kappa=\ku\) the constants of the operators with \(\kappa_i=\ku\) vanish, so one needs an index \(k\) with \(\kappa_k>\ku\), which exists when \(\ku<\kb\). In this way the barriers \(A(1+|x|)^{-\kappa}\) with \(\kappa\le\ku\) cover the range \(q\ge\qz\) if \(\ku<\kb\), and \(q>\qz\) if \(\ku=\kb\), but not more. Since each \(Q_i\) is strictly decreasing, Lemma~\ref{lem:thresholds} gives
		\[
			\qz=\min_{1\le i\le n}Q_i(\ku)>\min_{1\le i\le n}Q_i(\kb)=\frac N\kb\qquad\text{whenever }\ku<\kb.
		\]
		Thus, unless all the decay exponents coincide, the second parameter \(\lambda\) is what allows us to reach the threshold \(N/\kb\).
	\end{remark}

	For \(n=1\) we have \(\Nm=\emptyset\) for every \(\kappa<\kappa_1\), so one may take \(\lambda=1\) and any \(A\) with \(A^{q-p_1+1}\le\cc_{s_1,p_1}(-\kappa)\). In this way Theorem~\ref{thm:existence} recovers \cite[Theorem~6.2]{DPQ26} when \(s_1\in(0,1)\), and its local counterpart \cite{BD02} when \(s_1=1\).

%=====================================================================
\section{The Liouville-type theorem}\label{sec:liouville}

	In this section we prove part (i) of Theorem~\ref{thm:main}, stated below as Theorem~\ref{thm:liouville}: in the range \(0<q<\qL\), every continuous non-negative weak super-solution of \eqref{eq:LE} vanishes identically. By Theorem~\ref{thm:existence}, the threshold \(\qL\) is then sharp. Since \(\E_\Lc=\sum_i\E_{p_i,s_i}\) is linear with respect to the test function, the barrier estimates for the sum follow by adding the single-operator versions term by term. For a fractional operator these estimates are proved in Section~\ref{subsec:trunc}, also in the range of exponents not covered by \cite{DPQ26}. The local order \(s_i=1\) requires its own barrier, which we provide in Section~\ref{subsec:localbarrier}. Throughout this section we rely on the comparison principle and the strong minimum principle of Section~\ref{sec:prelim}.

\subsection{Truncated powers}\label{subsec:trunc}
	A super-solution is bounded from below by comparison with truncated powers \(|x|^\beta\). The only restriction on the exponent is \(\beta<-\kb\), and in the proof of Theorem~\ref{thm:liouville} we take \(\beta\) close to \(-\kb\). Since \(-\kb\le-\kappa_i\) for every \(i\), the fundamental constant of each operator at such a \(\beta\) is negative whenever it is defined (Lemma~\ref{lem:fundamental}).

	Throughout the rest of this section, \(N\ge2\), \(0<\varepsilon<1<r<R\), we write \(B_\varrho\coloneqq B_\varrho(0)\), and
	\begin{equation}\label{eq:annuli}
		A_{r,R}\coloneqq\{x\in\RN:\ r<|x|<R\},\qquad A_r\coloneqq A_{r/2,2r}=\{x\in\RN:\ \tfrac r2<|x|<2r\}.
	\end{equation}
	Given \(\beta<0\), we set the inner truncation radius
	\[
		r_\varepsilon\coloneqq r\left(\frac\varepsilon{1+\varepsilon2^\beta}\right)^{-1/\beta}
	\]
	of \cite{DPQ26}. The ratio \(r_\varepsilon/r\) does not depend on \(r\) and tends to \(0^+\) as \(\varepsilon\to0^+\). We define the two truncated power barriers
	\begin{equation}\label{eq:barriers}
		\varphi_\varepsilon(x)\coloneqq
		\begin{cases}
			\varepsilon^\beta,&\text{if }0\le|x|<\varepsilon,\\
			|x|^\beta,&\text{if }\varepsilon\le|x|,
		\end{cases}
		\qquad
		\psi_\varepsilon(x)\coloneqq
		\begin{cases}
			r_\varepsilon^\beta,&\text{if }0\le|x|\le r_\varepsilon,\\
			|x|^\beta,&\text{if }r_\varepsilon<|x|\le2r,\\
			(2r)^\beta,&\text{if }2r<|x|.
		\end{cases}
	\end{equation}
	Both are radial, bounded and Lipschitz, hence they belong to \(\DL(\RN)\). The function \(\varphi_\varepsilon\) flattens the power \(|x|^\beta\) inside \(B_\varepsilon\), and \(\psi_\varepsilon\) flattens it inside \(B_{r_\varepsilon}\) and outside \(B_{2r}\).

	The following lemma replaces the single-operator estimates of \cite[Lemmas~4.1 and~4.2]{DPQ26} and extends them to every \(\beta<-\kappa\), including the range \(\beta\le-\frac N{p-1}\), where \(|x|^{\beta(p-1)}\) is not locally integrable at the origin. In this lemma \(\varepsilon>0\) and \(r>0\) are arbitrary, that is, we do not use the restriction \(0<\varepsilon<1<r\).

	\begin{lemma}[Truncated powers]\label{lem:trunc}
		Let \(N\ge2\), \(p\in(1,\infty)\) and \(s\in(0,1)\) with \(sp<N\), set \(\kappa\coloneqq\frac{N-sp}{p-1}\), and let \(\beta<-\kappa\). There is \(\theta_0=\theta_0(N,s,p,\beta)\in(0,\frac12]\) such that, for every \(\varepsilon>0\),
		\begin{equation}\label{eq:trunc-sub}
			(-\Delta_p)^s\varphi_\varepsilon(x)\le0\qquad\text{for every }x\text{ with }|x|\ge\varepsilon/\theta_0.
		\end{equation}
		Consequently:
		\begin{enumerate}[label=\textup{(\alph*)}]
			\item if \(1<r<R\) and \(\varepsilon\in(0,\theta_0]\), then \(\E_{p,s}(\varphi_\varepsilon,\varphi)\le0\) for every non-negative \(\varphi\in C^\infty_c(A_{r,R})\);
			\item if \(r>0\) and \(r_\varepsilon/r\le\theta_0/2\), then \((-\Delta_p)^s\psi_\varepsilon\le0\) pointwise in \(A_r\), and \(\E_{p,s}(\psi_\varepsilon,\varphi)\le0\) for every non-negative \(\varphi\in C^\infty_c(A_r)\).
		\end{enumerate}
	\end{lemma}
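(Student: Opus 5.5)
By the scaling identity \eqref{eq:scaling}, \(\varphi_\varepsilon(x)=\varepsilon^\beta\varphi_1(x/\varepsilon)\), so \((-\Delta_p)^s\varphi_\varepsilon(x)=\varepsilon^{\beta(p-1)-sp}(-\Delta_p)^s\varphi_1(x/\varepsilon)\). It therefore suffices to prove \eqref{eq:trunc-sub} for \(\varepsilon=1\): we must find \(\theta_0\) such that \((-\Delta_p)^s\varphi_1(x)\le0\) for \(|x|\ge1/\theta_0\). Fix \(x\) with \(|x|=R_0\) large, and assume \(x/|x|=e_1\). The pointwise value exists by Lemma~\ref{lem:smooth}, since \(\varphi_1\) is smooth with non-vanishing gradient near \(x\). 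Rescale as in \eqref{eq:rescaled}, with \(z=y/R_0\). This gives
\[
R_0^{sp-\beta(p-1)}(-\Delta_p)^s\varphi_1(x)=2\,\mathrm{PV}\!\int\frac{\Psi_p\left(1-R_0^{-\beta}\varphi_1(R_0z)\right)}{|e_1-z|^{N+sp}}\,dz,
\]
where \(R_0^{-\beta}\varphi_1(R_0z)=|z|^\beta\) for \(|z|\ge1/R_0\) and \(=R_0^{-\beta}\) inside. We split the integral into \(D_1=\{|z|\ge\frac12\}\) and \(D_2=\{|z|<\frac12\}\), exactly as in the proof of Proposition~\ref{prop:large}.

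On \(D_1\) (for \(R_0>2\)) the integrand is independent of \(R_0\) and equals \(\Psi_p(1-|z|^\beta)\). Its principal value is a finite constant \(K_1(N,s,p,\beta)\), by the same argument with a bounded Lipschitz extension \(\hat g\) and Lemma~\ref{lem:smooth}.

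On \(D_2\) every term is negative, since \(|z|^\beta\ge2^{-\beta}>1\); we bound it from above, keeping its size. The cases split as follows.
\begin{itemize}
\item If \(\beta(p-1)>-N\), the \(D_2\) integral converges, as \(R_0\to\infty\), to the corresponding part of the integral defining \(\cc_{s,p}(\beta)\) by monotone convergence. The total then tends to \(\cc_{s,p}(\beta)\), which is negative by \eqref{eq:csign} because \(\beta<-\kappa\) and \(\beta\ne0\). Hence the quantity is negative for \(R_0\) large. This recovers the range of \cite{DPQ26}.
\item If \(\beta(p-1)\le-N\), use \(1-t\le-\frac12t\) for \(t\ge2\), valid on \(\{|z|<2^{-1/|\beta|}/2\}\). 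Also \(|e_1-z|\le\frac32\) on \(D_2\). The \(D_2\) integral is then at most
\[
-c\int_{\{|z|<c'\}}\min\{|z|,R_0^{-1}\}^{\beta(p-1)}\,dz,
\]
which tends to \(-\infty\) as \(R_0\to\infty\): it grows like \(R_0^{-\beta(p-1)-N}\), or like \(\log R_0\) in the borderline case. It therefore dominates the constant \(K_1\).
\end{itemize}
In all cases there is \(R_*\) with \((-\Delta_p)^s\varphi_1(x)<0\) for \(|x|\ge R_*\), and we set \(\theta_0=\min\{\frac12,1/R_*\}\).

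For (a), with \(\varepsilon\le\theta_0\) and \(|x|>r>1\), we have \(|x|\ge\varepsilon/\theta_0\). Lemma~\ref{lem:smooth}, applied on \(U=A_{r,R}\) where \(\varphi_\varepsilon=|x|^\beta\) is smooth with nonzero gradient, converts the pointwise inequality into \(\E_{p,s}(\varphi_\varepsilon,\varphi)\le0\).

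For (b), note that \(\psi_\varepsilon=r_\varepsilon^{\beta}\,\varphi_1(\cdot/r_\varepsilon)\) on \(B_{2r}\), while for \(|y|>2r\) we have \(\psi_\varepsilon(y)=(2r)^\beta\ge|y|^\beta=(\text{the unmodified profile})(y)\). Take \(x\in A_r\). Then \(|x|\ge r/2\ge r_\varepsilon/\theta_0\), so the untruncated value is \(\le0\). Raising \(u(y)\) for \(|y|>2r\) only decreases \(\Psi_p(u(x)-u(y))\), since \(\Psi_p\) is nondecreasing. Hence \((-\Delta_p)^s\psi_\varepsilon(x)\le(-\Delta_p)^s[r_\varepsilon^\beta\varphi_1(\cdot/r_\varepsilon)](x)\le0\). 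The singular part near \(x\) is identical for both functions, so the comparison passes to the principal value. Lemma~\ref{lem:smooth} on \(U=A_r\) then gives the weak inequality.

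The main obstacle is the \(D_2\) estimate in the range \(\beta\le-\frac N{p-1}\). There the limiting constant \(\cc_{s,p}(\beta)\) does not exist, and one must instead show that the mass near the origin produces a divergent negative term that beats the finite \(D_1\) contribution uniformly.
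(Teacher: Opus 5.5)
Your proposal is correct and follows essentially the paper's proof. It uses the same scaling reduction to \(\varepsilon=1\) and the same near/far split: your \(D_2=\{|z|<\frac12\}\) is the paper's \(E_2=\{|y|<\rho/2\}\) after rescaling, and there, when \(\beta(p-1)\le-N\), the mass of \(\varphi_1^{p-1}\) near the origin beats the finite far contribution \(K_1\). Part (b) uses the same monotone comparison of \(\psi_\varepsilon\) with \(\varphi_{r_\varepsilon}\). The one difference is in the range \(\beta(p-1)>-N\): you pass to the limit \(\cc_{s,p}(\beta)<0\) by monotone convergence, while the paper bounds the flattening error explicitly by \(C_1\rho^{-N-sp}\). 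Your version is equally valid, since only the existence of \(\theta_0(N,s,p,\beta)\) is needed.

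One slip: in the \(D_2\) bound the integrand should be \(\max\{|z|,R_0^{-1}\}^{\beta(p-1)}\), equivalently \(\min\{|z|^{\beta(p-1)},R_0^{-\beta(p-1)}\}\). As written, \(\min\{|z|,R_0^{-1}\}^{\beta(p-1)}\) is not integrable at \(z=0\).
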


	\begin{proof}
		\noindent{\bf \emph{Step 1}:} Proof of \eqref{eq:trunc-sub}. Since \(\beta<0\), we have \(\varphi_\varepsilon(x)=\min\{\varepsilon^\beta,|x|^\beta\}=\varepsilon^\beta\varphi_1(x/\varepsilon)\), and the scaling identity \eqref{eq:scaling} gives
		\[
			(-\Delta_p)^s\varphi_\varepsilon(x)=\varepsilon^{\beta(p-1)-sp}\left[(-\Delta_p)^s\varphi_1\right](x/\varepsilon).
		\]
		Hence it suffices to prove \eqref{eq:trunc-sub} for \(\varepsilon=1\), that is, \((-\Delta_p)^s\varphi_1(x)\le0\) for \(|x|\ge1/\theta_0\). The function \(\varphi_1\) is bounded and Lipschitz, and it is smooth with non-vanishing gradient in \(\{|y|>1\}\), so by Lemma~\ref{lem:smooth} the value \((-\Delta_p)^s\varphi_1(x)\) exists for every \(|x|>1\). Fix \(x\) with \(\rho\coloneqq|x|\ge2\). By rotational invariance we may assume \(x=\rho e_1\).

		\medskip

		\noindent{\bf \emph{Case 1}:} \(-\frac N{p-1}<\beta<-\kappa\). By Lemma~\ref{lem:fundamental} and \eqref{eq:csign}, the power \(v_\beta=|\cdot|^\beta\) satisfies \((-\Delta_p)^sv_\beta(x)=\cc\,\rho^{\beta(p-1)-sp}\) with \(\cc\coloneqq\cc_{s,p}(\beta)<0\). The functions \(\varphi_1\) and \(v_\beta\) coincide outside \(B_1\), and \(\varphi_1=1\le|y|^\beta\) in \(B_1\). For \(0<\varepsilon'<\rho-1\) the truncations at \(x\) are taken over the same set, so
		\[
			(-\Delta_p)^s_{\varepsilon'}\varphi_1(x)-(-\Delta_p)^s_{\varepsilon'}v_\beta(x)=2\int_{B_1}\frac{\Psi_p(\rho^\beta-1)-\Psi_p(\rho^\beta-|y|^\beta)}{|x-y|^{N+sp}}\,dy.
		\]
		Here \(\rho^\beta-|y|^\beta\le\rho^\beta-1<0\), so the numerator equals \((|y|^\beta-\rho^\beta)^{p-1}-(1-\rho^\beta)^{p-1}\in[0,|y|^{\beta(p-1)}]\). Moreover, \(|x-y|\ge\rho-1\ge\rho/2\) for \(y\in B_1\). Since \(\beta(p-1)>-N\), letting \(\varepsilon'\to0^+\) we get
		\[
			(-\Delta_p)^s\varphi_1(x)\le\cc\,\rho^{\beta(p-1)-sp}+C_1\rho^{-N-sp}=\rho^{\beta(p-1)-sp}\left(\cc+C_1\rho^{-(N+\beta(p-1))}\right),
		\]
		with \(C_1\coloneqq2^{N+sp+1}\int_{B_1}|y|^{\beta(p-1)}\,dy<\infty\). Since \(N+\beta(p-1)>0\) and \(\cc<0\), the right-hand side is non-positive for \(\rho\ge\rho_1\coloneqq\max\{2,(C_1/|\cc|)^{1/(N+\beta(p-1))}\}\).

		\medskip

		\noindent{\bf \emph{Case 2}:} \(\beta\le-\frac N{p-1}\). We split \(\RN\) into \(E_1\coloneqq\{|y|\ge\rho/2\}\), which contains a neighborhood of \(x\), and \(E_2\coloneqq\{|y|<\rho/2\}\).

		\emph{Contribution of \(E_1\).} Since \(\rho/2\ge1\), we have \(\varphi_1(y)=|y|^\beta\) in \(E_1\). The change of variables \(y=\rho z\) gives, for \(0<\varepsilon'<\rho/2\),
		\[
			2\int_{E_1\setminus B_{\varepsilon'}(x)}\frac{\Psi_p(\varphi_1(x)-\varphi_1(y))}{|x-y|^{N+sp}}\,dy=\rho^{\beta(p-1)-sp}\,2\int_{\{|z|\ge\frac12\}\setminus B_{\varepsilon'/\rho}(e_1)}\frac{\Psi_p(1-|z|^\beta)}{|e_1-z|^{N+sp}}\,dz.
		\]
		The integrand on the right is bounded on \(\{|z|\ge\frac12\}\), since \(|z|^\beta\le2^{-\beta}\) there, and the kernel is integrable away from \(e_1\). Near \(e_1\) the function \(|z|^\beta\) is smooth with gradient \(\beta e_1\ne0\), so by Lemma~\ref{lem:local} the integral on the right converges, as \(\varepsilon'\to0^+\), to a finite number \(K_\beta\) depending only on \(N\), \(s\), \(p\) and \(\beta\). Its sign plays no role, and only \(K_\beta^+\) is used below.

		\emph{Contribution of \(E_2\).} For \(|y|<\rho/2\) we have \(\varphi_1(y)\ge(\rho/2)^\beta=2^{-\beta}\rho^\beta\), hence \(\varphi_1(y)-\rho^\beta\ge(1-2^\beta)\varphi_1(y)>0\), and \(|x-y|\le3\rho/2\). Therefore
		\[
			2\int_{E_2}\frac{\Psi_p(\varphi_1(x)-\varphi_1(y))}{|x-y|^{N+sp}}\,dy\le-c_2\,\rho^{-N-sp}\int_{\{|y|<\rho/2\}}\varphi_1(y)^{p-1}\,dy,\qquad c_2\coloneqq2(1-2^\beta)^{p-1}\left(\tfrac23\right)^{N+sp}.
		\]
		If \(\beta(p-1)<-N\), the last integral is at least \(|B_1|\). If \(\beta(p-1)=-N\), it is at least
		\[
			\int_{\{1<|y|<\rho/2\}}|y|^{-N}\,dy=|S^{N-1}|\log(\rho/2).
		\]
		The contribution of \(E_2\) does not depend on \(\varepsilon'\), so letting \(\varepsilon'\to0^+\) we obtain
		\[
			(-\Delta_p)^s\varphi_1(x)\le
			\begin{cases}
				\rho^{\beta(p-1)-sp}\left(K_\beta^+-c_2|B_1|\,\rho^{-(N+\beta(p-1))}\right),&\text{if }\beta(p-1)<-N,\\
				\rho^{-N-sp}\left(K_\beta^+-c_2|S^{N-1}|\log(\rho/2)\right),&\text{if }\beta(p-1)=-N.
			\end{cases}
		\]
		In the first case \(-(N+\beta(p-1))>0\). In both cases the right-hand side is non-positive for \(\rho\ge\rho_2\), with \(\rho_2\ge2\) depending only on \(N\), \(s\), \(p\) and \(\beta\).

		\medskip

		Taking \(\theta_0\coloneqq1/\rho_1\) in Case~1 and \(\theta_0\coloneqq1/\rho_2\) in Case~2 proves \eqref{eq:trunc-sub}.

		\medskip

		\noindent{\bf \emph{Step 2}:} Proof of \textup{(a)}. If \(\varepsilon\le\theta_0\) and \(x\in A_{r,R}\), then \(|x|>r>1\ge\varepsilon/\theta_0\), so \((-\Delta_p)^s\varphi_\varepsilon\le0\) in \(A_{r,R}\). There \(\varphi_\varepsilon=|x|^\beta\) is smooth with non-vanishing gradient, and \(\varphi_\varepsilon\) is bounded and Lipschitz in \(\RN\). Lemma~\ref{lem:smooth} with \(U=A_{r,R}\) gives the claim.

		\medskip

		\noindent{\bf \emph{Step 3}:} Proof of \textup{(b)}. We compare \(\psi_\varepsilon\) with \(\varphi_{r_\varepsilon}\). These two functions coincide in \(\overline{B_{2r}}\), and \(\psi_\varepsilon(y)=(2r)^\beta\ge|y|^\beta=\varphi_{r_\varepsilon}(y)\) for \(|y|>2r\). Hence, for \(x\in A_r\) and every \(y\), \(\psi_\varepsilon(x)-\psi_\varepsilon(y)\le\varphi_{r_\varepsilon}(x)-\varphi_{r_\varepsilon}(y)\). Since \(\Psi_p\) is increasing, \((-\Delta_p)^s_{\varepsilon'}\psi_\varepsilon(x)\le(-\Delta_p)^s_{\varepsilon'}\varphi_{r_\varepsilon}(x)\). Both functions are smooth with non-vanishing gradient near \(x\), because \(r_\varepsilon<r/2<|x|<2r\). Letting \(\varepsilon'\to0^+\) (Lemma~\ref{lem:smooth}) and using \eqref{eq:trunc-sub} with \(r_\varepsilon\) in place of \(\varepsilon\), which applies since \(|x|>r/2\ge r_\varepsilon/\theta_0\), we obtain \((-\Delta_p)^s\psi_\varepsilon(x)\le0\). The weak inequality follows from Lemma~\ref{lem:smooth} with \(U=A_r\), where \(\psi_\varepsilon=|x|^\beta\).
	\end{proof}

	\begin{remark}\label{rem:trunc}
		In Case~1 of Lemma~\ref{lem:trunc} we compare \(\varphi_1\) with the untruncated power, as in \cite{DPQ26}. The flattening raises the operator, but only by \(O(\rho^{-N-sp})\), and this is beaten by the negative fundamental constant. In Case~2 there is no fundamental constant. The flattening, however, puts a mass of size at least \(|B_1|\) (or \(\log\rho\)) near the origin. This mass gives a negative contribution of order \(\rho^{-N-sp}\), which dominates the term \(K_\beta\rho^{\beta(p-1)-sp}\) coming from the far region, because \(\beta(p-1)\le-N\). This is the same mechanism as in Proposition~\ref{prop:large}. It allows us to treat sums in which \(\kb\ge\frac N{p_i-1}\) for some fractional index \(i\), with no condition relating the different operators.
	\end{remark}

\subsection{A barrier for the local order}\label{subsec:localbarrier}
	Lemma~\ref{lem:trunc} concerns a single \emph{fractional} operator, \(s\in(0,1)\). Before deriving the joint version, we supply the missing element for the order \(s=1\). In this case the argument is shorter than in the nonlocal case, since the local operator only sees a neighborhood of each point. For a single local operator the corresponding results are classical. The power solutions go back to Kichenassamy and V\'eron \cite{KV86}, and the nonexistence for the local Lane--Emden problem to \cite{BD02,MP99}. We include the computation in the form needed for the joint estimates.

	\begin{lemma}[Local barrier, \(s=1\)]\label{lem:localbarrier}
		Let \(p\in(1,\infty)\) with \(p<N\), and let \(\beta<\frac{p-N}{p-1}\). Then the truncations \eqref{eq:barriers} satisfy
		\[
			\E_{p,1}(\varphi_\varepsilon,\varphi)\le0\quad\text{for every non-negative }\varphi\in C^\infty_c(A_{r,R}),
		\]
		and, if \(r_\varepsilon<r/2\),
		\[
			\E_{p,1}(\psi_\varepsilon,\varphi)\le0\quad\text{for every non-negative }\varphi\in C^\infty_c(A_r).
		\]
	\end{lemma}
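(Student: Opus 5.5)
The plan is to use the locality of \(-\Delta_p\). The form \(\E_{p,1}(v,\varphi)\) only involves \(\nabla v\) on \(\operatorname{supp}\varphi\), so in both statements we only need the functions on the annulus where the test function lives. In \(A_{r,R}\) we have \(|x|>r>1>\varepsilon\), hence \(\varphi_\varepsilon=|x|^\beta\) there. In \(A_r=\{r/2<|x|<2r\}\) the assumption \(r_\varepsilon<r/2\) gives \(\psi_\varepsilon=|x|^\beta\) on the open set \(\{r_\varepsilon<|x|<2r\}\supset A_r\). So both claims reduce to one fact: for every non-negative \(\varphi\in C^\infty_c(U)\), with \(U\) an open annulus avoiding the origin,
\[
	\int_U\Psi_p(\nabla v_\beta)\cdot\nabla\varphi\,dx\le0 .
\]

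The steps are as follows.

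\emph{Step 1: pointwise sign.} Since \(v_\beta\) is smooth in \(U\) with \(\nabla v_\beta=\beta|x|^{\beta-2}x\ne0\), the field \(\Psi_p(\nabla v_\beta)\) is \(C^1\) in \(U\). By \eqref{eq:fund}–\eqref{eq:c1p} of Lemma~\ref{lem:fundamental}, which hold for every \(\beta\in\R\) when \(s=1\), we get
\[
	-\Delta_pv_\beta=\cc_{1,p}(\beta)|x|^{\beta(p-1)-p},\qquad \cc_{1,p}(\beta)=-\Psi_p(\beta)\bigl((p-1)\beta+N-p\bigr).
\]
From \(\beta<\frac{p-N}{p-1}<0\) (recall \(p<N\)) we get \(\Psi_p(\beta)<0\) and \((p-1)\beta+N-p<0\). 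Hence \(\cc_{1,p}(\beta)<0\), consistently with \eqref{eq:csign}.

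\emph{Step 2: integration by parts.} Because \(\varphi\) has compact support in \(U\), where everything is smooth, the divergence theorem has no boundary terms. It gives
\[
	\E_{p,1}(v,\varphi)=\int_U(-\Delta_pv_\beta)\varphi\,dx\le0 .
\]
Here \(v\) is \(\varphi_\varepsilon\) or \(\psi_\varepsilon\), and each coincides with \(v_\beta\) on a neighbourhood of \(\operatorname{supp}\varphi\), so \(\nabla v=\nabla v_\beta\) a.e. there.

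The main point to check is only the identification of the truncated functions with the pure power on the support of the test function. For \(\varphi_\varepsilon\) this is immediate from \(\varepsilon<1<r\). For \(\psi_\varepsilon\) it requires the strict inequality \(r_\varepsilon<r/2\), together with the fact that the outer flattening at \(2r\) lies outside the open set \(A_r\). Unlike the nonlocal Lemma~\ref{lem:trunc}, there are no far-field or mass-near-the-origin contributions. This is why any \(\beta<\frac{p-N}{p-1}\) works without an extra threshold \(\theta_0\).
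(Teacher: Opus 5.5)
Your proposal is correct and follows the paper's proof essentially line by line. You identify \(\varphi_\varepsilon\) and \(\psi_\varepsilon\) with \(|x|^\beta\) on the relevant annulus (using \(\varepsilon<1<r\), resp.\ \(r_\varepsilon<r/2\) together with the outer radius \(2r\)), obtain \(\cc_{1,p}(\beta)<0\) from \eqref{eq:c1p}, and integrate by parts against the non-negative test function.
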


	\begin{proof}
		Since \(p<N\), we have \(\beta<\frac{p-N}{p-1}<0\). By Lemma~\ref{lem:fundamental},
		\[
			-\Delta_p|x|^\beta=\cc_{1,p}(\beta)\,|x|^{\beta(p-1)-p}\quad\text{in }\RN\setminus\{0\},\qquad\text{with}\quad\cc_{1,p}(\beta)=-\Psi_p(\beta)((p-1)\beta+N-p)<0.
		\]
		Let \(\varphi\in C^\infty_c(A_{r,R})\) be non-negative. Since \(\varepsilon<1<r\), we have \(\varphi_\varepsilon=|x|^\beta\) in \(A_{r,R}\), where this function is smooth with non-vanishing gradient. Integrating by parts,
		\[
			\E_{p,1}(\varphi_\varepsilon,\varphi)=\int_{A_{r,R}}|\nabla\varphi_\varepsilon|^{p-2}\nabla\varphi_\varepsilon\cdot\nabla\varphi\,dx=\cc_{1,p}(\beta)\int_{A_{r,R}}|x|^{\beta(p-1)-p}\varphi\,dx\ \le\ 0.
		\]
		For \(\psi_\varepsilon\) the argument is identical in \(A_r\), because the two flattening radii \(r_\varepsilon<r/2\) and \(2r\) lie outside the open annulus \(\{\frac r2<|x|<2r\}\), so that \(\psi_\varepsilon=|x|^\beta\) there.
	\end{proof}

	\begin{remark}\label{rem:localbarrier}
		Unlike its nonlocal counterpart, Lemma~\ref{lem:localbarrier} does not require \(\varepsilon\) to be small, apart from the condition \(r_\varepsilon<r/2\) needed for \(\psi_\varepsilon\), because the truncation takes place inside the inner ball, which the local operator does not see. For an index \(i\in L\), the condition of Lemma~\ref{lem:localbarrier} is \(\beta<\frac{p_i-N}{p_i-1}=-\kappa_i\), which holds for every \(\beta<-\kb\).
	\end{remark}

\subsection{Hadamard properties}
	Throughout this subsection we fix an exponent \(\beta<-\kb\). The freedom in the choice of \(\beta\) will be used in the proof of Theorem~\ref{thm:liouville}. We use the annuli of \eqref{eq:annuli} and the truncations \(\varphi_\varepsilon\), \(\psi_\varepsilon\) of \eqref{eq:barriers}. For \(i\in F\) let \(\theta_i\coloneqq\theta_0(N,s_i,p_i,\beta)\) be given by Lemma~\ref{lem:trunc}, which applies because \(\beta<-\kappa_i\), and set \(\theta_*\coloneqq\min_{i\in F}\theta_i\) if \(F\ne\emptyset\), and \(\theta_*\coloneqq\frac12\) if \(F=\emptyset\).

	\begin{lemma}[Joint sub-solution of type \(\varphi_\varepsilon\)]\label{lem:joint1}
		There is \(\varepsilon_0\in(0,1)\), independent of \(r\) and \(R\), such that for every \(\varepsilon\in(0,\varepsilon_0)\),
		\[
			\E_{p_i,s_i}(\varphi_\varepsilon,\varphi)\le0\qquad\text{for every }i\in\{1,\dots,n\}\text{ and every non-negative }\varphi\in C^\infty_c(A_{r,R}).
		\]
		In particular, \(\varphi_\varepsilon\) is a weak sub-solution of \(\Lc v=0\) in \(A_{r,R}\).
	\end{lemma}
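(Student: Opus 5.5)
The plan is to treat the fractional and the local indices separately and then add, using that \(\E_\Lc\) is linear in the test function. Set
\[
	\varepsilon_0\coloneqq\min\{\theta_*,\tfrac12\},
\]
where \(\theta_*\) is defined before the statement. This number depends only on \(N\), \(\beta\) and \(\{(p_i,s_i)\}_{i=1}^n\), so in particular it is independent of \(r\) and \(R\). Since \(\theta_*\le\frac12<1\), we have \(\varepsilon_0\in(0,1)\).

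For a fractional index \(i\in F\), the fixed exponent satisfies \(\beta<-\kb\le-\kappa_i\). Hence Lemma~\ref{lem:trunc} applies to \((-\Delta_{p_i})^{s_i}\) with \(\theta_0=\theta_i\ge\theta_*\). For \(\varepsilon\in(0,\varepsilon_0)\) we have \(\varepsilon<\theta_i\), and we are in the standing regime \(1<r<R\). Part (a) of that lemma then gives \(\E_{p_i,s_i}(\varphi_\varepsilon,\varphi)\le0\) for every non-negative \(\varphi\in C^\infty_c(A_{r,R})\). The only point to check is that the pointwise inequality \eqref{eq:trunc-sub} covers the whole annulus. It does, because \(|x|>r>1>\varepsilon/\theta_i\) for \(x\in A_{r,R}\). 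The passage from the pointwise to the weak inequality is already contained in Lemma~\ref{lem:trunc}(a), via Lemma~\ref{lem:smooth}.

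For a local index \(i\in L\), Remark~\ref{rem:localbarrier} shows that \(\beta<-\kb\le-\kappa_i=\frac{p_i-N}{p_i-1}\). Also \(p_i=s_ip_i<N\). So Lemma~\ref{lem:localbarrier} applies directly and gives \(\E_{p_i,1}(\varphi_\varepsilon,\varphi)\le0\) for every non-negative \(\varphi\in C^\infty_c(A_{r,R})\). In this case no smallness of \(\varepsilon\) is needed beyond \(\varepsilon<1<r\).

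Summing over \(i\) gives \(\E_\Lc(\varphi_\varepsilon,\varphi)\le0\). For the sub-solution claim we also need that \(\varphi_\varepsilon\) belongs to \(\DL(A_{r,R})\), which holds because \(\varphi_\varepsilon\) is bounded and Lipschitz, and that \(f\equiv0\) is locally integrable, which is trivial. With these, \(\varphi_\varepsilon\) is a weak sub-solution of \(\Lc v=0\) in \(A_{r,R}\) in the sense of Definition~\ref{def:weak}. The main (and only mild) obstacle is keeping \(\varepsilon_0\) uniform in \(r\) and \(R\). This is guaranteed because each \(\theta_i\) depends only on \(N,s_i,p_i,\beta\), and because the condition \(|x|\ge\varepsilon/\theta_i\) is implied by \(|x|>1\) on the annulus.
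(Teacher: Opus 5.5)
Your proposal is correct and follows essentially the same route as the paper: take \(\varepsilon_0=\theta_*\), use Lemma~\ref{lem:trunc}(a) for the fractional indices and Lemma~\ref{lem:localbarrier} for the local ones, and add over \(i\). The only difference is cosmetic: the \(\min\) with \(\tfrac12\) is redundant, since \(\theta_0\in(0,\frac12]\) already gives \(\theta_*\le\frac12\).
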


	\begin{proof}
		Let \(\varepsilon_0\coloneqq\theta_*\). For a fractional index \(i\in F\), Lemma~\ref{lem:trunc}\textup{(a)}, applied to \((p_i,s_i)\), gives the inequality for every \(\varepsilon\in(0,\varepsilon_0)\), since \(\varepsilon_0\le\theta_i\). This does not depend on \(r\) or \(R\). For a local index \(i\in L\) we have \(\beta<-\kb\le-\kappa_i\), so Lemma~\ref{lem:localbarrier} with \(p=p_i\) gives the same inequality for every \(\varepsilon\in(0,1)\). The last assertion follows by adding over \(i\).
	\end{proof}

	\begin{lemma}[Joint sub-solution of type \(\psi_\varepsilon\)]\label{lem:joint2}
		There is \(\varepsilon_0\in(0,1)\), independent of \(r\), such that for every \(\varepsilon\in(0,\varepsilon_0)\) we have \(r_\varepsilon<r/2\) and
		\[
			\E_{p_i,s_i}(\psi_\varepsilon,\varphi)\le0\qquad\text{for every }i\in\{1,\dots,n\}\text{ and every non-negative }\varphi\in C^\infty_c(A_r).
		\]
		In particular, \(\psi_\varepsilon\) is a weak sub-solution of \(\Lc v=0\) in \(A_r\).
	\end{lemma}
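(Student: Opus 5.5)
The argument runs parallel to Lemma~\ref{lem:joint1}. The main point is to pick \(\varepsilon_0\) so that the ratio \(r_\varepsilon/r\) is small enough for Lemma~\ref{lem:trunc}\textup{(b)} to apply to every fractional index at once. The ratio
\[
	\frac{r_\varepsilon}r=\left(\frac\varepsilon{1+\varepsilon2^\beta}\right)^{-1/\beta}
\]
does not depend on \(r\), is a continuous increasing function of \(\varepsilon>0\) (because \(-1/\beta>0\) and \(\varepsilon\mapsto\varepsilon/(1+\varepsilon2^\beta)\) is increasing), and tends to \(0^+\) as \(\varepsilon\to0^+\). I would therefore choose \(\varepsilon_0\in(0,1)\) so that
\[
	\left(\frac{\varepsilon_0}{1+\varepsilon_02^\beta}\right)^{-1/\beta}\le\frac{\theta_*}2 .
\]
Since \(\theta_*\le\frac12\), this gives \(r_\varepsilon/r\le\theta_*/2\le\frac14<\frac12\) for every \(\varepsilon\in(0,\varepsilon_0)\), and in particular \(r_\varepsilon<r/2\). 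This choice depends only on \(N\), \(\beta\) and \(\{(p_i,s_i)\}\), and not on \(r\).

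Next I would treat the fractional indices. For \(i\in F\) we have \(\beta<-\kb\le-\kappa_i\), so Lemma~\ref{lem:trunc} applies to \((p_i,s_i)\) with \(\theta_0=\theta_i\ge\theta_*\). Hence \(r_\varepsilon/r\le\theta_*/2\le\theta_i/2\), and Lemma~\ref{lem:trunc}\textup{(b)} gives \(\E_{p_i,s_i}(\psi_\varepsilon,\varphi)\le0\) for every non-negative \(\varphi\in C^\infty_c(A_r)\). Note that Lemma~\ref{lem:trunc}\textup{(b)} holds for arbitrary \(r>0\), which is exactly why \(\varepsilon_0\) can be taken independent of \(r\).

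For the local indices, \(i\in L\), Remark~\ref{rem:localbarrier} gives \(\beta<-\kappa_i=\frac{p_i-N}{p_i-1}\), and we have already secured \(r_\varepsilon<r/2\). The second assertion of Lemma~\ref{lem:localbarrier} then yields the inequality. The final claim follows by adding over \(i\): \(\psi_\varepsilon\) is bounded and Lipschitz, so it lies in \(\DL(A_r)\), and \(\E_\Lc(\psi_\varepsilon,\varphi)\le0\). I do not expect a genuine obstacle here. The only points needing care are the monotonicity and limit of \(r_\varepsilon/r\), used to make the choice of \(\varepsilon_0\) uniform in \(r\), and the case \(F=\emptyset\), where \(\theta_*=\frac12\) and the condition reduces to \(r_\varepsilon<r/2\).
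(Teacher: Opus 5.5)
Your proposal is correct and follows the paper's proof essentially line by line. You choose \(\varepsilon_0\) from the \(r\)-independent ratio so that \(r_\varepsilon/r\le\theta_*/2\le\frac14\), then apply Lemma~\ref{lem:trunc}\textup{(b)} to the fractional indices and Lemma~\ref{lem:localbarrier} to the local ones; your monotonicity remark on \(\varepsilon\mapsto r_\varepsilon/r\) only makes the choice of \(\varepsilon_0\) explicit.
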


	\begin{proof}
		Since \(r_\varepsilon/r\) does not depend on \(r\) and tends to \(0\) as \(\varepsilon\to0^+\), there is \(\varepsilon_0\in(0,1)\), independent of \(r\), such that \(r_\varepsilon/r\le\theta_*/2\le\frac14\) for every \(\varepsilon\in(0,\varepsilon_0)\). For a fractional index \(i\in F\), Lemma~\ref{lem:trunc}\textup{(b)} gives \(\E_{p_i,s_i}(\psi_\varepsilon,\varphi)\le0\). For a local index \(i\in L\), Lemma~\ref{lem:localbarrier} with \(p=p_i\) gives \(\E_{p_i,1}(\psi_\varepsilon,\varphi)\le0\), since \(r_\varepsilon<r/2\).
	\end{proof}

	Observe that Lemmas~\ref{lem:joint1} and~\ref{lem:joint2} give the sign of each summand \(\E_{p_i,s_i}\), and not only of their sum. By Remark~\ref{rem:affine}, every function of the form \(\alpha\varphi_\varepsilon+c\) or \(\alpha\psi_\varepsilon+c\) with \(\alpha>0\) and \(c\in\R\) is then a weak sub-solution of \(\Lc v=0\) in the corresponding annulus. We now prove the two Hadamard-type estimates, following \cite[Section~4]{DPQ26}. For a continuous function \(u\) we write
	\[
		m(r)\coloneqq\min_{\overline{B_r}}u,\qquad r>0.
	\]

	\begin{proposition}[Hadamard-type lower bound]\label{prop:had1}
		Let \(u\in\DL(\RN)\cap C(\RN)\) be a non-negative weak super-solution of \(\Lc u=0\) in \(\RN\). For every \(\beta<-\kb\) and \(r_0>1\) there is a constant \(C>0\), depending only on \(N\), \(\beta\) and \(\{(p_i,s_i)\}_{i=1}^n\), such that
		\[
			m(r)\ge C\,m(r_0)\,r^\beta\qquad\text{for every }r>r_0.
		\]
	\end{proposition}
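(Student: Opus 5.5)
The plan is to compare \(u\) with the affine rescaling of the truncated power \(\varphi_\varepsilon\) on the annulus \(A_{r_0,r}\), as in \cite[Section~4]{DPQ26}. Fix \(\beta<-\kb\), \(r_0>1\) and \(r>r_0\), and fix \(\varepsilon\in(0,\varepsilon_0)\) with \(\varepsilon_0\) from Lemma~\ref{lem:joint1}. We may assume \(m(r_0)>0\); otherwise the claim is trivial. Note that \(u\ge0\) is a weak super-solution of \(\Lc u=0\), and \(m\) is non-increasing.

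We set
\[
	v(x)\coloneqq m(r_0)\,\frac{\varphi_\varepsilon(x)-r^\beta}{\varepsilon^\beta-r^\beta}.
\]
This has the form \(\alpha\varphi_\varepsilon+c\) with \(\alpha=m(r_0)/(\varepsilon^\beta-r^\beta)>0\), because \(\beta<0\) and \(\varepsilon<1<r\). By Lemma~\ref{lem:joint1} each summand \(\E_{p_i,s_i}(\varphi_\varepsilon,\cdot)\) is non-positive on non-negative test functions in \(A_{r_0,r}\). Hence, by Remark~\ref{rem:affine}, \(\Lc v\le0\le\Lc u\) weakly in \(A_{r_0,r}\), and we are in the sign-per-summand situation the remark requires. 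The function \(v\) is bounded and Lipschitz, so \(v\in\DL\) and it is continuous.

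Next we check the exterior condition. In \(\overline{B_{r_0}}\) we have \(v\le m(r_0)\cdot1\le u\), since \(\varphi_\varepsilon\le\varepsilon^\beta\). For \(|x|\ge r\) we have \(\varphi_\varepsilon(x)=|x|^\beta\le r^\beta\), so \(v\le0\le u\). Proposition~\ref{prop:comparison} on the bounded set \(\Omega=A_{r_0,r}\) then gives \(u\ge v\) in \(\RN\).

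To evaluate the bound we cannot use \(|x|=r\), where \(v\) vanishes. So we apply the construction with \(2r\) in place of \(r\), and evaluate on \(\overline{B_r}\setminus B_{r_0}\). For \(r_0\le|x|\le r\),
\[
	v(x)\ge m(r_0)\frac{r^\beta-(2r)^\beta}{\varepsilon^\beta}=m(r_0)(1-2^\beta)\varepsilon^{-\beta}r^\beta .
\]
In \(B_{r_0}\) we have \(u\ge m(r_0)\ge m(r_0)r^\beta\), since \(r>1\) and \(\beta<0\). Hence \(m(r)\ge C\,m(r_0)r^\beta\) with \(C=\min\{1,(1-2^\beta)\varepsilon^{-\beta}\}\). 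This \(C\) depends only on \(\beta\) and \(\varepsilon_0\), and \(\varepsilon_0\) depends only on \(N\), \(\beta\) and the pairs \((p_i,s_i)\), through \(\theta_*\).

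The main point to watch is the use of comparison. Proposition~\ref{prop:comparison} needs \(u\ge v\) on all of \(\RN\setminus\Omega\), including the region near the origin. This is exactly why the flattening \(\varphi_\varepsilon\) is used instead of \(|x|^\beta\), and why \(\varepsilon_0\) must be independent of \(r\) (Lemma~\ref{lem:joint1}), so that the constant is uniform. Everything else is routine.
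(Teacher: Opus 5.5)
Your proof is correct and follows the paper's argument: you compare $u$ on an annulus with the affine rescaling of $\varphi_\varepsilon$ given by Lemma~\ref{lem:joint1}, and then split $\overline{B_r}$ into $B_{r_0}$ and the annulus. The only difference is that you fix the outer radius at $2r$, which costs a harmless factor $1-2^\beta$ in the constant, whereas the paper lets $R\to\infty$ and obtains the pointwise bound $u(x)\ge\varepsilon^{-\beta}m(r_0)|x|^\beta$ for all $|x|>r_0$ at once.
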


	\begin{proof}
		If \(m(r_0)=0\) there is nothing to prove, so we assume \(m(r_0)>0\). Fix \(\beta<-\kb\) and \(r_0>1\). By Lemma~\ref{lem:joint1}, applied with \(r=r_0\), there is \(\varepsilon_0\in(0,1)\), independent of \(r_0\) and \(R\), such that every summand of \(\E_\Lc(\varphi_\varepsilon,\cdot)\) is non-positive in \(A_{r_0,R}\) for every \(R>r_0\). We fix \(\varepsilon\in(0,\varepsilon_0)\). For \(R>r_0\), since \(\beta<0\) gives \(\varepsilon^\beta>R^\beta>0\), the number \(\alpha\coloneqq m(r_0)/(\varepsilon^\beta-R^\beta)\) is positive, and we set
		\[
			H_{\varepsilon,R}(x)\coloneqq\alpha(\varphi_\varepsilon(x)-R^\beta)=
			\begin{cases}
				\alpha(|x|^\beta-R^\beta),&\text{if }|x|\ge\varepsilon,\\
				m(r_0),&\text{if }|x|<\varepsilon.
			\end{cases}
		\]
		By Remark~\ref{rem:affine}, \(H_{\varepsilon,R}\) is a weak sub-solution of \(\Lc v=0\) in \(A_{r_0,R}\), so \(\Lc u\ge\Lc H_{\varepsilon,R}\) weakly there. Moreover, \(u\ge H_{\varepsilon,R}\) in \(\RN\setminus A_{r_0,R}\). Indeed, in \(\{|x|\le r_0\}\) we have \(H_{\varepsilon,R}\le m(r_0)\le u\), because \(|x|^\beta\le\varepsilon^\beta\) for \(|x|\ge\varepsilon\), while in \(\{|x|\ge R\}\) we have \(H_{\varepsilon,R}\le0\le u\), because \(|x|^\beta\le R^\beta\). The function \(H_{\varepsilon,R}\) is bounded and Lipschitz, so it belongs to \(\DL(\RN)\cap C(\RN)\). By the comparison principle (Proposition~\ref{prop:comparison}), \(u\ge H_{\varepsilon,R}\) in \(A_{r_0,R}\), that is,
		\[
			u(x)\ \ge\ \frac{m(r_0)}{\varepsilon^\beta-R^\beta}(|x|^\beta-R^\beta),\qquad r_0<|x|<R.
		\]
		Letting \(R\to\infty\), so that \(R^\beta\to0\), we get
		\begin{equation}\label{eq:had1}
			u(x)\ \ge\ \varepsilon^{-\beta}\,m(r_0)\,|x|^\beta\qquad\text{for }|x|>r_0.
		\end{equation}
		Let \(C\coloneqq\varepsilon^{-\beta}>0\), which does not depend on \(r_0\), and let \(r>r_0\). If \(r_0<|x|\le r\), then \(|x|^\beta\ge r^\beta\), and \eqref{eq:had1} yields \(u(x)\ge C\,m(r_0)\,r^\beta\). If \(|x|\le r_0\), then
		\[
			u(x)\ge m(r_0)\ge C\,m(r_0)\,r^\beta,
		\]
		since \(Cr^\beta=(r/\varepsilon)^\beta<1\), as \(\varepsilon<1<r\) and \(\beta<0\). Taking the minimum over \(\overline{B_r}\) we obtain \(m(r)\ge C\,m(r_0)\,r^\beta\).
	\end{proof}

	\begin{proposition}[Doubling estimate]\label{prop:had2}
		Let \(u\in\DL(\RN)\cap C(\RN)\) be a non-negative weak super-solution of \(\Lc u=0\) in \(\RN\). For every \(\beta<-\kb\) there is a constant \(C>1\), depending only on \(N\), \(\beta\) and \(\{(p_i,s_i)\}_{i=1}^n\), such that
		\[
			m\left(\frac r2\right)\le C\,m(r)\qquad\text{for every }r>1.
		\]
	\end{proposition}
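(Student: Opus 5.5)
We compare \(u\) from below with a rescaled and shifted copy of the two-sided truncation \(\psi_\varepsilon\) on the annulus \(A_r=\{\frac r2<|x|<2r\}\). This is the analogue of \cite[Lemma~4.2]{DPQ26}, with Lemma~\ref{lem:joint2} replacing the single-operator estimate.

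Fix \(\beta<-\kb\) and take \(\varepsilon\in(0,\varepsilon_0)\) from Lemma~\ref{lem:joint2}, so that \(r_\varepsilon<r/2\). Recall that the ratio \(r_\varepsilon/r\) is independent of \(r\). If \(m(r)=0\), the strong minimum principle (Proposition~\ref{prop:smp}) forces \(u\equiv0\), and the estimate is trivial. Assume therefore that \(m(r)>0\), and set
\[
	H(x)\coloneqq m(r)\,\frac{\psi_\varepsilon(x)-(2r)^\beta}{r_\varepsilon^\beta-(2r)^\beta}.
\]
The denominator is positive because \(r_\varepsilon<2r\) and \(\beta<0\). By Lemma~\ref{lem:joint2} and Remark~\ref{rem:affine}, each summand of \(\E_\Lc(H,\cdot)\) is non-positive on \(A_r\), so \(\Lc u\ge0\ge\Lc H\) weakly in \(A_r\). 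Since \(H\) is bounded and Lipschitz, it lies in \(\DL(\RN)\cap C(\RN)\).

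We next check the exterior condition \(u\ge H\) on \(\RN\setminus A_r\). On \(\{|x|\le r/2\}\) we have \(H\le m(r)\le u\), because \(\psi_\varepsilon\le r_\varepsilon^\beta\) everywhere. On \(\{|x|\ge2r\}\) we have \(H=0\le u\). The comparison principle (Proposition~\ref{prop:comparison}) then gives \(u\ge H\) on \(A_r\). In particular, for \(|x|=r\),
\[
	u(x)\ge m(r)\,\frac{r^\beta-(2r)^\beta}{r_\varepsilon^\beta-(2r)^\beta}=m(r)\,\frac{1-2^\beta}{(r_\varepsilon/r)^\beta-2^\beta}\eqqcolon c\,m(r).
\]
The constant \(c\in(0,1)\) depends only on \(\varepsilon\) and \(\beta\), and hence only on \(N\), \(\beta\) and the pairs \((p_i,s_i)\).

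This bound holds on the sphere \(\{|x|=r\}\), but the statement concerns the minimum over a ball. The main obstacle is therefore to turn the one-sided comparison into an estimate linking \(m(r/2)\) and \(m(r)\) with the constant in the stated direction. As written, \(m(r/2)\le C\,m(r)\) would follow from the minimum over \(\overline{B_{r/2}}\) being controlled by the minimum over \(\overline{B_r}\). Since \(m\) is non-increasing, \(m(r)\le m(r/2)\) holds trivially.

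I would first check carefully which inequality is trivial and which one the later contact argument needs. The nontrivial content should be the reverse lower bound \(m(r)\ge c\,m(r/2)\). To get it, I would run the same construction with \(m(r/2)\) in place of \(m(r)\): the outer flat region \(\{|x|\ge2r\}\) handles the far field, and the inner plateau \(r_\varepsilon^\beta\) handles \(\overline{B_{r/2}}\). Evaluating on \(\overline{B_r}\setminus B_{r/2}\) then gives \(u\ge c\,m(r/2)\) there, while on \(B_{r/2}\) we have \(u\ge m(r/2)\ge c\,m(r/2)\) directly. Together these yield \(m(r)\ge c\,m(r/2)\), that is, \(m(r/2)\le C\,m(r)\) with \(C=1/c>1\).
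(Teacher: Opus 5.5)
Your final argument is correct and is exactly the paper's proof. You compare \(u\) on \(A_r\) with \(m(r/2)\,(\psi_\varepsilon-(2r)^\beta)/(r_\varepsilon^\beta-(2r)^\beta)\) via Lemma~\ref{lem:joint2}, Remark~\ref{rem:affine} and Proposition~\ref{prop:comparison}, then use that \(\psi_\varepsilon\) is non-increasing in \(|x|\) to get \(m(r)\ge\varepsilon(1-2^\beta)\,m(r/2)\). The first run normalized by \(m(r)\), and the appeal to the strong minimum principle, can be dropped, since that run only gives \(u\ge c\,m(r)\) on \(\{|x|=r\}\), which already follows from the definition of \(m(r)\).
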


	\begin{proof}
		Fix \(\beta<-\kb\) and \(r>1\), and assume \(m(r/2)>0\), since otherwise there is nothing to prove. By Lemma~\ref{lem:joint2} there is \(\varepsilon_0\in(0,1)\), independent of \(r\), such that for \(\varepsilon\in(0,\varepsilon_0)\) we have \(r_\varepsilon<r/2\) and every summand of \(\E_\Lc(\psi_\varepsilon,\cdot)\) is non-positive in \(A_r\). We fix \(\varepsilon\in(0,\varepsilon_0)\). Since \(\beta<0\) gives \(r_\varepsilon^\beta>(2r)^\beta>0\), we may define
		\[
			\widetilde H_{\varepsilon,r}(x)\coloneqq m\left(\frac r2\right)\frac{\psi_\varepsilon(x)-(2r)^\beta}{r_\varepsilon^\beta-(2r)^\beta}.
		\]
		By Remark~\ref{rem:affine}, \(\widetilde H_{\varepsilon,r}\) is a weak sub-solution of \(\Lc v=0\) in \(A_r\). Moreover, \(u\ge\widetilde H_{\varepsilon,r}\) in \(\RN\setminus A_r\). Indeed, in \(\{|x|\le r/2\}\) we have \(\psi_\varepsilon\le r_\varepsilon^\beta\), so \(\widetilde H_{\varepsilon,r}\le m(r/2)\le u\), while in \(\{|x|\ge2r\}\) we have \(\psi_\varepsilon=(2r)^\beta\), so \(\widetilde H_{\varepsilon,r}=0\le u\). By the comparison principle (Proposition~\ref{prop:comparison}), \(u\ge\widetilde H_{\varepsilon,r}\) in \(\RN\). Because \(\beta<0\), the profile \(\psi_\varepsilon\) is non-increasing in \(|x|\) and equals \(r^\beta\) on \(\{|x|=r\}\), so \(\widetilde H_{\varepsilon,r}\) attains its minimum over \(\overline{B_r}\) there. Hence
		\[
			m(r)\ \ge\ \min_{\overline{B_r}}\widetilde H_{\varepsilon,r}=m\left(\frac r2\right)\frac{r^\beta-(2r)^\beta}{r_\varepsilon^\beta-(2r)^\beta}=m\left(\frac r2\right)\frac{1-2^\beta}{(r_\varepsilon/r)^\beta-2^\beta}.
		\]
		By the choice of \(r_\varepsilon\) we have \((r_\varepsilon/r)^\beta=(1+\varepsilon2^\beta)/\varepsilon\), and therefore
		\[
			m(r)\ \ge\ \varepsilon(1-2^\beta)\,m\left(\frac r2\right).
		\]
		Since \(\varepsilon\in(0,1)\) and \(2^\beta\in(0,1)\), the constant \(\varepsilon(1-2^\beta)\) lies in \((0,1)\), and it depends only on \(N\), \(\beta\) and \(\{(p_i,s_i)\}_{i=1}^n\), through the choice of \(\varepsilon\). Taking \(C\coloneqq(\varepsilon(1-2^\beta))^{-1}>1\) yields \(m(r/2)\le C\,m(r)\) for every \(r>1\).
	\end{proof}

\subsection{Contact estimate}
	The last ingredient is an upper bound for \(m(R)\) which, in the proof of the Liouville theorem, will be incompatible with the lower bound of Proposition~\ref{prop:had1}. It is the analogue for the sum \(\Lc\) of the contact argument of \cite[Section~6]{DPQ26}. We obtain it directly from the weak comparison principle, using the cut-off \(\mu\) of \eqref{eq:mu} and the constant \(C^*\) of \eqref{eq:Cstar}.

	\begin{proposition}[Contact estimate]\label{prop:contact}
		Let \(q>0\), and let \(u\in\DL(\RN)\cap C(\RN)\) be a positive weak super-solution of \(\Lc u=u^q\) in \(\RN\). Then
		\begin{equation}\label{eq:contact}
			m(R)^q\le C^*\sum_{i=1}^nm\left(\frac R2\right)^{p_i-1}R^{-s_ip_i}\qquad\text{for every }R>0,
		\end{equation}
		where \(m(r)=\min_{\overline{B_r}}u\).
	\end{proposition}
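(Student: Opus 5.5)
The plan is to test the super-solution inequality against a competitor built from the flat cut-off \(\mu\) of \eqref{eq:mu} and to extract the estimate from a contact argument that uses only the weak comparison principle (Proposition~\ref{prop:comparison}). Fix \(R>0\). Since \(u\) is positive and continuous, \(m(R)>0\) and \(m(R/2)>0\). Consider the one-parameter family
\[
	v_t(x)\coloneqq t+m\left(\tfrac R2\right)\mu\left(\tfrac{2|x|}R\right),\qquad t\in\R,
\]
so that \(v_t=t+m(R/2)\) in \(\overline{B_{R/4}}\), \(v_t=t\) outside \(B_{R/2}\), and \(v_t\) is a flat profile of the type in Lemma~\ref{lem:flat}. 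By \eqref{eq:scaling} and the definition \eqref{eq:Cstar} of \(C^*\), we have, pointwise in \(\RN\),
\[
	\Lc v_t\le\sum_{i=1}^n m\left(\tfrac R2\right)^{p_i-1}\left(\tfrac R2\right)^{-s_ip_i}C^*.
\]
The statement has \(R^{-s_ip_i}\) rather than \((R/2)^{-s_ip_i}\), so I will instead rescale with \(\mu(|x|/R)\) and amplitude \(m(R/2)\), or adjust which ball is used. The precise normalization will be fixed so that the constants come out exactly as in \eqref{eq:contact}: the profile is \(m(R/2)\,\mu(|x|/R)\), so that \(\Lc v_t\le C^*\sum_i m(R/2)^{p_i-1}R^{-s_ip_i}\eqqcolon\Theta_R\). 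By \eqref{eq:Eflat}, this bound also holds weakly.

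\textbf{Contact step.} Argue by contradiction and suppose \(m(R)^q>\Theta_R\). Let \(t^*\coloneqq\sup\{t:\ v_t\le u\text{ in }\RN\}\). For very negative \(t\) the inequality \(v_t\le u\) holds, since \(u\ge0\) and \(v_t\) is bounded, so \(t^*\) is well defined. I expect \(t^*\le m(R)-m(R/2)\): the profile \(\mu(|x|/R)\) equals \(1\) on \(B_{R/2}\), which forces \(v_{t}\le u\) on a point of \(\overline{B_{R/2}}\) where \(u=m(R/2)\). Comparing at the minimum point of \(u\) in \(\overline{B_R}\) gives a similar constraint. In the set \(\Omega\coloneqq\{u<m(R)\}\cap B_R\), or more robustly in \(\Omega\coloneqq B_R\), we have
\[
	\Lc u\ge u^q\ge m(R)^q>\Theta_R\ge\Lc v_{t}
\]
wherever \(u\ge m(R)\), which holds on \(\overline{B_R}\). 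Now pick \(t\) slightly larger than \(t^*\) and apply the comparison principle on \(B_R\): outside \(B_R\), \(v_t=t\), and one needs \(t\le u\) there. This is the delicate point. Since \(u\) may be small far away, it is not clear that \(t\le\inf u\). To handle it I would rather take \(t\le0\) regime: use \(v\coloneqq m(R)\,\mu(|x|/R)\cdot(\text{const})\) vanishing outside \(B_R\), so that the exterior condition \(v\le u\) is automatic from \(u\ge0\). Concretely, compare \(u\) with \(w\coloneqq\tau\,\mu(|x|/R)\) on \(\Omega=B_R\), with \(\tau\le m(R)\) chosen so that the strict inequality \(\Lc u>\Lc w\) holds. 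Strictness then lets me push \(\tau\) slightly above \(m(R/2)\), the value of \(u\) at its minimum point in \(\overline{B_{R/2}}\) where \(w=\tau\), producing a contradiction with \(u\ge w\).

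The main obstacle is making this strict-inequality push rigorous in the weak setting, since there is no strong comparison principle for \(\Lc\) directly. The plan is to perturb: replace \(w\) by \(w_\eta\coloneqq(\tau+\eta)\mu(|x|/R)\) for small \(\eta>0\). The weak bound \(\Lc w_\eta\le C^*\sum_i(\tau+\eta)^{p_i-1}R^{-s_ip_i}\) depends continuously on \(\eta\), so the hypothesis \(m(R)^q>\Theta_R\) survives for small \(\eta\) when \(\tau=m(R/2)\). Then \(\Lc u\ge m(R)^q\ge\Lc w_\eta\) weakly in \(B_R\), while \(u\ge0=w_\eta\) outside \(B_R\). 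Proposition~\ref{prop:comparison} gives \(u\ge w_\eta=m(R/2)+\eta\) on \(\overline{B_{R/2}}\), which contradicts the definition of \(m(R/2)\). Hence \(m(R)^q\le\Theta_R\), which is \eqref{eq:contact}. One still has to check that \(u^q\ge m(R)^q\) on \(B_R\), which is immediate, and that the constants match those defining \(C^*\) under the scaling \eqref{eq:scaling}.
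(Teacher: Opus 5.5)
Your final argument is correct and is essentially the paper's proof: the same competitor \((m(R/2)+\eta)\mu(|x|/R)\), the same bound \(\Lc\le C^*\sum_i(m(R/2)+\eta)^{p_i-1}R^{-s_ip_i}\) obtained from \eqref{eq:scaling}, \eqref{eq:Cstar} and \eqref{eq:Eflat}, and the same contradiction from Proposition~\ref{prop:comparison} at a minimum point of \(u\) on \(\overline{B_{R/2}}\). The only difference is that you apply the comparison principle directly on \(B_R\) instead of on the sublevel set \(\{u-\zeta<t_*+\delta\}\) used in the paper; this is legitimate because the competitor vanishes outside \(B_R\) while \(u\ge0\) there. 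In a write-up, drop the earlier attempts with \(v_t\) and \(t^*\).
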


	\begin{proof}
		Fix \(R>0\) and \(\eta>0\), and set
		\[
			\zeta(x)\coloneqq\left(m\left(\frac R2\right)+\eta\right)\mu\!\left(\frac{|x|}R\right),\qquad K_\eta\coloneqq C^*\sum_{i=1}^n\left(m\left(\frac R2\right)+\eta\right)^{p_i-1}R^{-s_ip_i}.
		\]
		By the scaling identity \eqref{eq:scaling} and the definition \eqref{eq:Cstar} of \(C^*\), we have \(\Lc\zeta\le K_\eta\) pointwise in \(\RN\), and by Lemma~\ref{lem:flat},
		\begin{equation}\label{eq:zeta}
			\E_\Lc(\zeta,\varphi)=\int_{\RN}\Lc\zeta\,\varphi\,dx\le K_\eta\int_{\RN}\varphi\,dx\qquad\text{for every non-negative }\varphi\in C^\infty_c(\RN).
		\end{equation}
		We claim that \(m(R)^q<K_\eta\). Letting \(\eta\to0^+\) then gives \eqref{eq:contact}.

		Suppose, to the contrary, that \(m(R)^q\ge K_\eta\). The continuous function \(u-\zeta\) equals \(u>0\) in \(\{|x|\ge R\}\), and at a point \(\bar x\in\overline{B_{R/2}}\) with \(u(\bar x)=m(R/2)\) it equals \(-\eta\). Hence
		\[
			t_*\coloneqq\min_{\RN}(u-\zeta)\le-\eta<0,
		\]
		and the minimum is attained at some point \(x_R\in B_R\). Fix \(\delta\in(0,\eta)\) and consider
		\[
			\Omega\coloneqq\{x\in\RN:\ u(x)-\zeta(x)<t_*+\delta\},\qquad v\coloneqq\zeta+t_*+\delta.
		\]
		The set \(\Omega\) is open and contains \(x_R\). It is contained in \(B_R\), because \(u-\zeta=u>0>t_*+\delta\) in \(\{|x|\ge R\}\). By construction, \(u\ge v\) in \(\RN\setminus\Omega\). Since \(u\ge m(R)\) in \(\Omega\subset B_R\), for every non-negative \(\varphi\in C^\infty_c(\Omega)\) we have, by \eqref{eq:zeta} and Remark~\ref{rem:affine},
		\[
			\E_\Lc(u,\varphi)\ge\int_\Omega u^q\varphi\,dx\ge m(R)^q\int_\Omega\varphi\,dx\ge K_\eta\int_\Omega\varphi\,dx\ge\E_\Lc(\zeta,\varphi)=\E_\Lc(v,\varphi).
		\]
		That is, \(\Lc u\ge\Lc v\) weakly in \(\Omega\). Both \(u\) and \(v\) belong to \(\DL(\RN)\cap C(\RN)\), so Proposition~\ref{prop:comparison} gives \(u\ge v\) in \(\Omega\), that is, \(u-\zeta\ge t_*+\delta\) in \(\Omega\). This contradicts the fact that \(x_R\in\Omega\) and \(u(x_R)-\zeta(x_R)=t_*\).
	\end{proof}

\subsection{Proof of the Liouville-type theorem}
	We now combine the strong minimum principle, the contact estimate and the two Hadamard estimates, following the outline of \cite[Section~6]{DPQ26}. A non-trivial super-solution is bounded from below by a fixed negative power, while the contact estimate forces it to decay faster, and the two facts are incompatible in the range \(q<\qL\).

	\begin{theorem}[Liouville-type theorem]\label{thm:liouville}
		Let \(N\ge2\), \(n\ge1\), \(p_i\in(1,\infty)\) and \(s_i\in(0,1]\) for \(i\in\{1,\dots,n\}\), with \(\max_is_ip_i<N\). If \(0<q<\qL\) and \(u\in\DL(\RN)\cap C(\RN)\) is a non-negative weak super-solution of \eqref{eq:LE} (in the sense of Definition~\ref{def:weak}), then \(u\equiv0\).
	\end{theorem}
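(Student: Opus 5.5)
We argue by contradiction. Suppose \(u\not\equiv0\). Since \(u^q\ge0\), \(u\) is a non-negative weak super-solution of \(\Lc u=0\) in \(\RN\), and Proposition~\ref{prop:smp} gives \(u>0\) everywhere. In particular \(m(r)>0\) for every \(r>0\), and \(u\) is a positive weak super-solution of \(\Lc u=u^q\). This puts us in the setting of Propositions~\ref{prop:had1}, \ref{prop:had2} and~\ref{prop:contact} at the same time.

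\textbf{Choice of \(\beta\).} Since \(q<\qL=N/\kb\), we have \(q\kb<N\). We would use this strict inequality to fix \(\beta<-\kb\) close to \(-\kb\). The required closeness will come out of the exponent bookkeeping below; in particular we want \(-\beta q<N\).

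\textbf{Upper bound.} Combine the contact estimate \eqref{eq:contact} with the doubling estimate of Proposition~\ref{prop:had2}, which gives \(m(R/2)\le C\,m(R)\). For \(R>1\) this yields
\[
m(R)^q\le C\sum_{i=1}^n m(R)^{p_i-1}R^{-s_ip_i}.
\]
Hence, for each large \(R\), some index \(i=i(R)\) satisfies \(m(R)^{q-p_i+1}\le C R^{-s_ip_i}\). Passing to a sequence \(R_j\to\infty\), we may take this index to be a fixed \(i\).

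\textbf{Lower bound and contradiction.} Proposition~\ref{prop:had1} gives \(m(R)\ge c\,R^\beta\), and also \(m(R)\le m(1)\) because \(m\) is non-increasing. We then split into two cases according to the sign of \(q-p_i+1\).

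\emph{Case \(q-p_i+1>0\).} The upper bound gives \(m(R)\le CR^{-s_ip_i/(q-p_i+1)}\). We need this decay rate to beat \(R^\beta\), that is, \(s_ip_i/(q-p_i+1)>-\beta\), equivalently \(Q_i(-\beta)>q\). By Lemma~\ref{lem:thresholds} and the monotonicity of \(Q_i\), we have \(Q_i(\kb)\ge N/\kb>q\). Continuity then gives \(Q_i(-\beta)>q\) once \(-\beta\) is close enough to \(\kb\). Choosing \(\beta\) this way for all \(i\) simultaneously yields \(cR^\beta\le CR^{\beta'}\) with \(\beta'<\beta\), which is impossible as \(R\to\infty\).

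\emph{Case \(q-p_i+1\le0\).} Here \(m(R)^{q-p_i+1}\ge m(1)^{q-p_i+1}>0\) (with the convention that the power is \(1\) when the exponent is \(0\)). On the other hand, the right-hand side \(CR^{-s_ip_i}\) tends to \(0\), a contradiction.

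We expect the main obstacle to be the bookkeeping that makes every index simultaneously compatible with a single \(\beta<-\kb\), together with tracking the dependence of the constants: the constants in Propositions~\ref{prop:had1} and~\ref{prop:had2} depend on \(\beta\), so \(\beta\) must be fixed before \(R\to\infty\). The sublinear range \(q\le p_i-1\) is handled by the sign case above. The inequality \(Q_i(\kb)>q\) for every \(i\), supplied by Lemma~\ref{lem:thresholds}, is exactly where the hypothesis \(q<\qL\) enters.
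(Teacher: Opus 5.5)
Your proof is correct and follows the paper's argument. The ingredients are the same: the strong minimum principle, the contact estimate combined with the doubling estimate, the Hadamard lower bound $m(R)\ge cR^{\beta}$ with $\beta<-\kb$ chosen so that $Q_i(-\beta)>q$ for every $i$ with $q>p_i-1$ (via Lemma~\ref{lem:thresholds}), and a separate trivial treatment of the indices with $q\le p_i-1$. The only differences are cosmetic: you extract a dominant index along a sequence $R_j\to\infty$, whereas the paper shows that every summand of $C_1\sum_i m(R)^{p_i-1-q}R^{-s_ip_i}$ tends to zero, and your side condition $-\beta q<N$ is never used.
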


	\begin{proof}
		Suppose, by contradiction, that \(u\not\equiv0\). Since \(u^q\ge0\), the function \(u\) is a non-negative weak super-solution of \(\Lc u=0\) in \(\RN\), and the strong minimum principle (Proposition~\ref{prop:smp}) gives \(u>0\) in \(\RN\). Thus \(m(r)=\min_{\overline{B_r}}u>0\) for every \(r>0\).

		Fix any \(\beta_0<-\kb\) and let \(C_H>1\) be the constant of Proposition~\ref{prop:had2} for this exponent. Combining \eqref{eq:contact} with \(m(R/2)\le C_H\,m(R)\) and dividing by \(m(R)^q>0\), we get
		\begin{equation}\label{eq:sumterms}
			1\le C_1\sum_{i=1}^nm(R)^{p_i-1-q}R^{-s_ip_i}\qquad\text{for every }R>1,
		\end{equation}
		with \(C_1\coloneqq C^*C_H^{\max_i(p_i-1)}\). We show that, for a suitable choice of \(\beta\), every summand tends to \(0\) as \(R\to\infty\), which contradicts \eqref{eq:sumterms}.

		We first choose \(\beta\). By hypothesis and by Lemma~\ref{lem:thresholds}, \(q<\qL=\min_iQ_i(\kb)\). For every \(i\) with \(p_i-1<q\) we have \(q-p_i+1>0\), and therefore
		\[
			-\kb+\frac{s_ip_i}{q-p_i+1}>0\iff q<p_i-1+\frac{s_ip_i}\kb=Q_i(\kb),
		\]
		which holds. These are finitely many strict inequalities, so we can fix \(\beta<-\kb\) so close to \(-\kb\) that
		\begin{equation}\label{eq:betachoice}
			\beta+\frac{s_ip_i}{q-p_i+1}>0\qquad\text{for every }i\text{ with }p_i-1<q.
		\end{equation}
		By Proposition~\ref{prop:had1} with a fixed \(r_0>1\), there is \(C_\beta>0\) such that
		\begin{equation}\label{eq:lowerm}
			m(R)\ge C_\beta\,m(r_0)\,R^\beta\qquad\text{for every }R>r_0.
		\end{equation}

		Now fix \(i\in\{1,\dots,n\}\). If \(p_i-1\ge q\), then \(m(R)^{p_i-1-q}\le u(0)^{p_i-1-q}\), since \(0<m(R)\le u(0)\), and the \(i\)-th summand in \eqref{eq:sumterms} is at most \(C_1u(0)^{p_i-1-q}R^{-s_ip_i}\), which tends to \(0\). If \(p_i-1<q\), the exponent \(p_i-1-q\) is negative, and \eqref{eq:lowerm} gives
		\[
			m(R)^{p_i-1-q}R^{-s_ip_i}\le\left(C_\beta\,m(r_0)\right)^{p_i-1-q}R^{-(q-p_i+1)\left(\beta+\frac{s_ip_i}{q-p_i+1}\right)}\qquad\text{for }R>r_0.
		\]
		By \eqref{eq:betachoice} the exponent of \(R\) is negative, so this summand also tends to \(0\). Letting \(R\to\infty\) in \eqref{eq:sumterms} we obtain \(1\le0\). This contradiction shows that \(u\equiv0\).
	\end{proof}

	\begin{remark}[Single-operator case and the endpoint \(q=\qL\)]\label{rem:endpoint}
		For \(n=1\),
		\[
			\qL=p_1-1+\frac{s_1p_1}{\kappa_1}=\frac{(p_1-1)\kappa_1+s_1p_1}{\kappa_1}=\frac{(p_1-1)N}{N-s_1p_1}.
		\]
		Theorem~\ref{thm:liouville} reduces to \cite[Theorem~6.1]{DPQ26} when \(s_1\in(0,1)\), and to the nonexistence result for the \(p\)-Laplacian of \cite{BD02,MP99,SZ02} when \(s_1=1\). For a single local operator the endpoint \(q=\qL\) also belongs to the nonexistence range, see \cite{MP99,SZ02}. For \(s_1\in(0,1)\), the endpoint is studied in the recent preprint \cite{Liu25}, which contains several interesting ideas. We have not been able to follow one step of the argument in the critical case: the lower bound at infinity in \cite[Theorem~3.13]{Liu25} seems to require an additional multiplicative constant, since, for instance, the bounded weak super-solution \(\varepsilon\min\{1,|x|^{-\kappa_1}\}\) with \(\varepsilon\in(0,1)\) does not satisfy it as stated. We may well be missing something, and we believe that the approach of \cite{Liu25} can be completed. In the meantime, to the best of our knowledge, the case \(q=\qL\) is still open for a single fractional \(p\)-Laplacian, as already pointed out in \cite{DPQ26}. For \(-\Delta+(-\Delta)^s\) it was included in \cite{GLX26}.

		For the sum, the proof of Theorem~\ref{thm:liouville} stops at \(q=\qL\), but only because of the operators with \(\kappa_i=\kb\). For \(q=\qL\) and \(\kappa_i<\kb\) we still have \(q<Q_i(\kb)\) by Lemma~\ref{lem:thresholds}, so the corresponding summands in \eqref{eq:sumterms} tend to \(0\) as before. For \(\kappa_i=\kb\), instead, \(s_ip_i/(q-p_i+1)=\kb\), so \eqref{eq:betachoice} fails for every \(\beta<-\kb\), and the lower bound \(m(R)\ge cR^\beta\) only shows that such a summand is bounded by a positive power of \(R\). To conclude one would need a sharper lower bound, such as \(m(R)\ge cR^{-\kb}\log R\) for \(R\) large. Even the weaker bound \(m(R)\ge cR^{-\kb}\) does not follow from our arguments: at \(\beta=-\kb\) the fundamental constant of the operators with \(\kappa_i=\kb\) vanishes, and the flattening error has the wrong sign (Case~1 of Lemma~\ref{lem:trunc}). We do not know how to obtain these bounds, even when the operators with \(\kappa_i=\kb\) are linear, and we leave the endpoint for sums as an open problem.
	\end{remark}

%=====================================================================
\section{Examples}\label{sec:examples}

	We illustrate Theorem~\ref{thm:main} with three examples. They compare the sharp threshold \(\qL\) with the threshold \(\qz\) of the one-parameter barriers of Remark~\ref{rem:oneparam}, and they show the absorption of negative terms in Theorem~\ref{thm:existence}, including the case covered by Proposition~\ref{prop:large}. In (b), the nonexistence part relies on Case~2 of Lemma~\ref{lem:trunc}.

	\begin{example}\label{ex:three}
		In each case we list \(\kappa_i=\frac{N-s_ip_i}{p_i-1}\) and compute \(\qL\) and \(\qz\).
		\begin{enumerate}[label=\textup{(\alph*)}]
			\item \emph{A Laplacian and a half-Laplacian.} On \(\R^3\),
			\[
				\Lc u=-\Delta u+(-\Delta)^{1/2}u,
			\]
			so \(N=3\), \(n=2\), \(p_1=p_2=2\), \(s_1=1\), \(s_2=1/2\). Then \(\kappa_1=1\) and \(\kappa_2=2\), so \(\qL=\frac32\).
			\begin{itemize}
				\item For \(q>3/2\), Theorem~\ref{thm:existence} with \(k=2\) gives super-solutions \(A(\lambda+|x|)^{-\kappa}\) for every \(\kappa\in\left[\frac1{q-1},2\right)\).
				\item When \(\kappa\in(1,2)\) the local term has a negative constant,
				\[
					-\Delta w_\kappa(x)\ge-\kappa(\kappa-1)(1+|x|)^{-\kappa-2},
				\]
				and it decays faster than the positive nonlocal term \(\cc_{1/2,2}(-\kappa)(1+|x|)^{-\kappa-1}\).
				\item For \(q<3/2\), Theorem~\ref{thm:liouville} gives nonexistence.
			\end{itemize}
			The threshold \(3/2=N/(N-2s)\) agrees with \cite{GLX26}. By comparison, the barriers of Remark~\ref{rem:oneparam} with \(\lambda=1\) only cover \(q\ge\qz=\min_iQ_i(1)=2\).

			\item \emph{Three terms, with a large decay exponent.} On \(\R^5\),
			\[
				\Lc u=-\Delta u+(-\Delta_3)^{1/3}u+(-\Delta)^{3/4}u,
			\]
			so \(N=5\), \(n=3\), with
			\[
				\begin{array}
					{c|cccc}
					i&p_i&s_i&s_ip_i&\kappa_i\\
					\hline
					1&2&1&2&3\\
					2&3&1/3&1&2\\
					3&2&3/4&3/2&7/2
				\end{array}
			\]
			With \(k=3\), Theorem~\ref{thm:existence} gives super-solutions for every
			\[
				q>\qL=\frac N\kb=\frac{10}7,
			\]
			with \(\kappa\in\left[\frac3{2(q-1)},\frac72\right)\), while \(\qz=7/4\). For \(\kappa>2\) the second operator has a negative constant. For \(\kappa\ge5/2\) we also have \(\kappa(p_2-1)\ge N\), so its contribution is estimated by Proposition~\ref{prop:large}. For \(\kappa>3\) the local operator also has a negative constant. All these terms are absorbed. On the nonexistence side, \(\kb=7/2>5/2=N/(p_2-1)\), so every \(\beta<-7/2\) satisfies \(\beta(p_2-1)<-N\), and \(|x|^{\beta(p_2-1)}\) is not locally integrable at the origin. Nevertheless, for \(0<q<10/7\), Theorem~\ref{thm:liouville} gives nonexistence, by means of Case~2 of Lemma~\ref{lem:trunc} for the second operator. Thus \(10/7\) is the critical exponent.

			\item \emph{Equal decay exponents.} On \(\R^3\),
			\[
				\Lc u=-\Delta u+(-\Delta_3)^{1/3}u,
			\]
			so \(N=3\), \(n=2\), \(p_1=2\), \(s_1=1\), \(p_2=3\), \(s_2=1/3\). Then \(\kappa_1=\kappa_2=1\) and \(\qL=\qz=3\). In this case no absorption is needed.
		\end{enumerate}
	\end{example}

%=====================================================================
	\section*{Acknowledgments}
	L.\,M.~Del Pezzo was partially supported by ANII (Agencia Nacional de Investigaci\'on e Innovaci\'on, Uruguay) grant FCE\_3\_2024\_1\_181302. L.\,M.~Del Pezzo and A.~Quaas were partially supported by grant PID2023-148028NB-I00, funded by MICIU/\allowbreak AEI/\allowbreak 10.13039/\allowbreak 501100011033 and by ERDF/EU.

	A.~Quaas was partially supported by FONDECYT grant 1231585. A.~Quaas and B.~Pichucho were partially supported by the MATH-AmSud project AMSUD230019. B.~Pichucho was partially supported by ANID-Subdirecci\'on de Capital Humano/\allowbreak Doctorado Nacional/\allowbreak 21220159.

	B.~Pichucho thanks the Facultad de Ciencias Econ\'omicas y de Administraci\'on, Universidad de la Rep\'ublica, for its hospitality during a research stay in which part of this work was carried out.


\begin{thebibliography}{99}

	\bibitem{BDPQ25} B.~Barrios, L.\,M.~Del Pezzo, A.~Quaas, \emph{Mixed local and nonlocal Laplacian without standard critical exponent for Lane--Emden equation}, preprint (2025), arXiv:2507.12258.

	\bibitem{BM21} B.~Barrios, M.~Medina, \emph{Equivalence of weak and viscosity solutions in fractional non-homogeneous problems}, Math.\ Ann.\ \textbf{381} (2021), no.~3--4, 1979--2012.

	\bibitem{BBF26} M.~Bhakta, A.~Biswas, R.~Filippucci, \emph{Liouville properties for differential inequalities with \((p,q)\) Laplacian operator}, J.\ Lond.\ Math.\ Soc.\ (2) \textbf{113} (2026), Paper No.~e70490.

	\bibitem{BDVV22} S.~Biagi, S.~Dipierro, E.~Valdinoci, E.~Vecchi, \emph{Mixed local and nonlocal elliptic operators: regularity and maximum principles}, Comm.\ Partial Differential Equations \textbf{47} (2022), no.~3, 585--629.

	\bibitem{BD02} I.~Birindelli, F.~Demengel, \emph{Some Liouville theorems for the \(p\)-Laplacian}, in: Proceedings of the 2001 Luminy Conference on Quasilinear Elliptic and Parabolic Equations and Systems, Electron.\ J.\ Differ.\ Equ.\ Conf.\ \textbf{8} (2002), 35--46.

	\bibitem{BS22} C.~Bucur, M.~Squassina, \emph{An asymptotic expansion for the fractional \(p\)-Laplacian and for gradient-dependent nonlocal operators}, Commun.\ Contemp.\ Math.\ \textbf{24} (2022), Paper No.~2150021.

	\bibitem{DPQ17} L.\,M.~Del Pezzo, A.~Quaas, \emph{A Hopf's lemma and a strong minimum principle for the fractional \(p\)-Laplacian}, J.\ Differential Equations \textbf{263} (2017), no.~1, 765--778.

	\bibitem{DPQ26} L.\,M.~Del Pezzo, A.~Quaas, \emph{The fundamental solution of the fractional \(p\)-Laplacian}, NoDEA Nonlinear Differential Equations Appl.\ \textbf{33} (2026), Paper No.~62.

	\bibitem{DKP16} A.~Di Castro, T.~Kuusi, G.~Palatucci, \emph{Local behavior of fractional \(p\)-minimizers}, Ann.\ Inst.\ H.\ Poincar\'e C Anal.\ Non Lin\'eaire \textbf{33} (2016), no.~5, 1279--1299.

	\bibitem{DNPV12} E.~Di Nezza, G.~Palatucci, E.~Valdinoci, \emph{Hitchhiker's guide to the fractional Sobolev spaces}, Bull.\ Sci.\ Math.\ \textbf{136} (2012), no.~5, 521--573.

	\bibitem{DPSV25} S.~Dipierro, K.~Perera, C.~Sportelli, E.~Valdinoci, \emph{An existence theory for nonlinear superposition operators of mixed fractional order}, Commun.\ Contemp.\ Math.\ \textbf{27} (2025), no.~8, Paper No.~2550005.

	\bibitem{DPLSV25} S.~Dipierro, E.~Proietti Lippi, C.~Sportelli, E.~Valdinoci, \emph{A general theory for the \((s,p)\)-superposition of nonlinear fractional operators}, Nonlinear Anal.\ Real World Appl.\ \textbf{82} (2025), Paper No.~104251.

	\bibitem{EJN26} H.~El Jeddaoui, D.~Nabab, \emph{New link between the fractional \(p\)-Laplacian operators and a class of McKean--Vlasov flight type processes}, preprint (2026), arXiv:2609.19492.

	\bibitem{FQ11} P.~Felmer, A.~Quaas, \emph{Fundamental solutions and Liouville type theorems for nonlinear integral operators}, Adv.\ Math.\ \textbf{226} (2011), no.~3, 2712--2738.

	\bibitem{GK22} P.~Garain, J.~Kinnunen, \emph{On the regularity theory for mixed local and nonlocal quasilinear elliptic equations}, Trans.\ Amer.\ Math.\ Soc.\ \textbf{375} (2022), no.~8, 5393--5423.

	\bibitem{Gid80} B.~Gidas, \emph{Symmetry properties and isolated singularities of positive solutions of nonlinear elliptic equations}, in: Nonlinear Partial Differential Equations in Engineering and Applied Science, Lecture Notes in Pure and Appl.\ Math., vol.~54, Dekker, New York, 1980, 255--273.

	\bibitem{GS81a} B.~Gidas, J.~Spruck, \emph{A priori bounds for positive solutions of nonlinear elliptic equations}, Comm.\ Partial Differential Equations \textbf{6} (1981), no.~8, 883--901.

	\bibitem{GS81b} B.~Gidas, J.~Spruck, \emph{Global and local behavior of positive solutions of nonlinear elliptic equations}, Comm.\ Pure Appl.\ Math.\ \textbf{34} (1981), no.~4, 525--598.

	\bibitem{GLX26} Y.~Guo, C.~Li, J.~Xie, \emph{Liouville theorems for the Lane--Emden equation involving a mixed local--nonlocal operator}, preprint (2026), arXiv:2606.16940.

	\bibitem{IMS16} A.~Iannizzotto, S.~Mosconi, M.~Squassina, \emph{Global H\"older regularity for the fractional \(p\)-Laplacian}, Rev.\ Mat.\ Iberoam.\ \textbf{32} (2016), no.~4, 1353--1392.

	\bibitem{KV86} S.~Kichenassamy, L.~V\'eron, \emph{Singular solutions of the \(p\)-Laplace equation}, Math.\ Ann.\ \textbf{275} (1986), no.~4, 599--615.

	\bibitem{KL26} T.~Kim, T.~Lee, \emph{Liouville theorems for nonlocal Lane--Emden inequalities with degenerate kernels}, preprint (2026), arXiv:2602.13822.

	\bibitem{KKL19} J.~Korvenp\"a\"a, T.~Kuusi, E.~Lindgren, \emph{Equivalence of solutions to fractional \(p\)-Laplace type equations}, J.\ Math.\ Pures Appl.\ (9) \textbf{132} (2019), 1--26.

	\bibitem{LG26} R.~Lakshmi, S.~Ghosh, \emph{On weak and viscosity solutions to a nonhomogeneous mixed local--nonlocal equation}, preprint (2026), arXiv:2606.20099.

	\bibitem{LWX26} C.~Li, L.~Wang, J.~Xie, \emph{Asymptotic expansions of the fundamental solution for a mixed local--nonlocal operator}, preprint (2026), arXiv:2609.01403.

	\bibitem{Liu25} L.~Liu, \emph{Nontrivial nonnegative weak solutions to fractional \(p\)-Laplace inequalities}, preprint (2025), arXiv:2501.11989.

	\bibitem{Mar91} P.~Marcellini, \emph{Regularity and existence of solutions of elliptic equations with \(p,q\)-growth conditions}, J.\ Differential Equations \textbf{90} (1991), no.~1, 1--30.

	\bibitem{MP99} E.~Mitidieri, S.\,I.~Pohozaev, \emph{Nonexistence of positive solutions for quasilinear elliptic problems on \(\RN\)}, Proc.\ Steklov Inst.\ Math.\ \textbf{227} (1999), 186--216.

	\bibitem{PS07} P.~Pucci, J.~Serrin, \emph{The Maximum Principle}, Progress in Nonlinear Differential Equations and their Applications, vol.~73, Birkh\"auser, Basel, 2007.

	\bibitem{Ser64} J.~Serrin, \emph{Local behavior of solutions of quasi-linear equations}, Acta Math.\ \textbf{111} (1964), 247--302.

	\bibitem{Ser65} J.~Serrin, \emph{Isolated singularities of solutions of quasi-linear equations}, Acta Math.\ \textbf{113} (1965), 219--240.

	\bibitem{SZ02} J.~Serrin, H.~Zou, \emph{Cauchy--Liouville and universal boundedness theorems for quasilinear elliptic equations and inequalities}, Acta Math.\ \textbf{189} (2002), no.~1, 79--142.

	\bibitem{Vaz84} J.\,L.~V\'azquez, \emph{A strong maximum principle for some quasilinear elliptic equations}, Appl.\ Math.\ Optim.\ \textbf{12} (1984), no.~3, 191--202.

\end{thebibliography}
\end{document}